\documentclass[reqno,11pt]{amsart}

\usepackage{amsmath,amssymb,amsthm,mathtools}
\usepackage{enumitem}
\usepackage{hyperref}
\usepackage{mathrsfs}
\usepackage{microtype}
\usepackage{xcolor}
\usepackage{marginnote}
 \usepackage{todonotes}

\hypersetup{
    colorlinks=true,
    linkcolor=blue,
    citecolor=blue,
    urlcolor=blue
}

\newtheorem{theorem}{Theorem}[section]
\newtheorem{proposition}[theorem]{Proposition}
\newtheorem{lemma}[theorem]{Lemma}
\newtheorem{corollary}[theorem]{Corollary}
\newtheorem{conjecture}[theorem]{Conjecture}
\newtheorem{question}[theorem]{Question}
\theoremstyle{definition}
\newtheorem{definition}[theorem]{Definition}
\theoremstyle{remark}
\newtheorem{remark}[theorem]{Remark}

\DeclareMathOperator{\tr}{tr}

\DeclareMathOperator{\vol}{vol}

\DeclareMathOperator{\intt}{int}

\DeclareMathOperator{\Per}{Per}

\newcommand{\cQ}{\mathcal Q}
\newcommand{\cE}{\mathcal E}

\DeclareMathOperator{\GL}{GL}
\DeclareMathOperator{\SL}{SL}
\newcommand{\R}{\mathbb{R}}

\newcommand{\E}{\mathbb E}
\DeclareMathOperator{\OO}{O}
\DeclareMathOperator{\SO}{SO}
\newcommand{\N}{\mathbb N}

\newcommand{\1}{\mathbf 1}
\newcommand{\eps}{\varepsilon}
\newcommand{\FK}{\mathrm{FK}}
\newcommand{\HS}{\mathrm{HS}}
\newcommand{\OU}{\mathrm{OU}}
\newcommand{\dd}{\,d}

\newcommand{\rcheb}{r_{\rm Ch}}

\newcommand{\norm}[1]{\left\lVert #1\right\rVert}
\newcommand{\ip}[2]{\left\langle #1,#2\right\rangle}

\usepackage{xcolor}

\title[The Faber--Krahn position and Gaussian inequalities]{The Faber--Krahn position of convex bodies and Gaussian measure inequalities}
\author{Dmitry Faifman}
\author{Iosif Polterovich}

\address{D\'epartement de Math\'ematiques et de Statistique, Universit\'e de
	Montr\'eal, CP 6128 succ Centre-Ville, Montr\'eal, QC H3C 3J7, Canada}
\email{dmitry.faifman@umontreal.ca, iosif.polterovich@umontreal.ca}

\begin{document}

\begin{abstract}
We say that a convex body is in Faber–Krahn position if it minimizes the first Dirichlet eigenvalue within its volume-preserving linear orbit. We prove that this position is unique up to orthogonal transformations, answering a question
of Schmuckenschl\"ager from 2011.  This is a corollary of a new log-convexity property of the first eigenvalue under positive definite linear deformations.
While the centrally symmetric case follows from the Gaussian B-theorem, the extension to arbitrary convex bodies requires a quantitative analysis of conditioned Brownian motion. As consequences, we obtain  several applications to the minimization of the first Dirichlet eigenvalue within linear orbits, including a new proof of the Pólya--Szegő theorem for triangles and its analogue for simplices. We also prove related convexity results for  the first eigenvalue of the Ornstein--Uhlenbeck operator,
the inverse inradius and the planar Cheeger constant. In a  different direction, we show using similar ideas that the Gaussian conjugate Rogers--Shephard inequality due to Milman--Nakamura--Tsuji yields improved Schmuckenschl\"ager-type bounds for intersections and Minkowski sums of centrally symmetric convex bodies.
\end{abstract}

\maketitle

\tableofcontents

\section{Introduction}

\subsection{Motivation and background}

A convex body is a compact convex subset of $\R^n$ with nonempty interior.  We write $|K|$ for its $n$-dimensional volume.  The first Dirichlet eigenvalue, denoted $\lambda_1(K)$, is the first eigenvalue of $-\Delta$ on $\intt (K)$.

A recurring theme at the intersection of convex and spectral geometry is that inequalities for the Gaussian measure of convex sets yield inequalities for the first Dirichlet eigenvalue.  The bridge is probabilistic and can be explained as follows. Let $X_t$ be the Brownian motion in $\R^n$ with generator $\Delta/2$ starting at $x$. We denote by $\mathbb P_x$ the corresponding probability. Let
\[
        \tau_K=\inf\{t>0:X_t\notin K\}
\]
be the first exit time from $K$.  The well-known formula of Kac reads
\begin{equation}\label{eq:kac-intro}
        \frac12 \lambda_1(K)=-\lim_{T\to\infty}\frac1T\log \mathbb P_x(\tau_K>T),
\end{equation}
whenever $x\in\mathrm{int}(K)$.

If we consider a finite sample of the Brownian path, namely
\[
        X^m:=(X_{T/m},X_{2T/m},\ldots,X_T),
\]
then $X^m\in(\R^n)^m$ is a random vector with a certain centered Gaussian probability distribution $\widetilde\gamma_{T, m}$. One may approximate the event $\{\tau_K>T\}=\{X_t\in K ,\quad \forall t\leq T\}$ by the event $\{X^m\in K^m\}$, which has probability $\widetilde \gamma_{T,m}(K^m)$.

Therefore, inequalities for Gaussian measures of convex sets can lead in the limit $m\to \infty$ to inequalities for Brownian survival probabilities, and after taking the limit $T\to\infty$, to inequalities for $\lambda_1$.

Several classical results fit this scheme. The log-concavity of the Gaussian measure readily implies the Brunn--Minkowski inequality for the first eigenvalue
\begin{equation} \label{eq:BM}\lambda_1(tK+(1-t)L)\leq t\lambda_1(K)+(1-t)\lambda_1(L),\end{equation}
which by homogeneity considerations  yields the formally stronger concavity statement \begin{equation} \label{eq:BM2} \lambda_1^{-1/2}(tK+(1-t)L)\geq t  \lambda_1^{-1/2}(K)+(1-t) \lambda_1^{-1/2}(L).\end{equation}  First established by Brascamp--Lieb \cite{BrascampLieb} who studied the diffusion equation stressing PDE methods, the inequality was later approached explicitly by Brownian motion in \cite{Borell_diffusion}. A different proof for convex $K, L$ using tools from convex analysis appeared in \cite{Colesanti}.

Brascamp and Lieb have also shown that the first Dirichlet eigenfunction of a convex domain is log-concave \cite{BrascampLieb}, see also \cite{Borell_hitting}. This is similarly the limit of the log-concavity of $x\mapsto \mathbb P_x(\tau_K>T)$ for any finite $T$.

The celebrated Faber--Krahn
inequality asserts that a Euclidean ball minimizes $\lambda_1(\Omega)$ among domains $\Omega$ of fixed volume, and can be proved using symmetrization. 
Luttinger's generalized isoperimetric inequalities \cite{Luttinger1}, together with the Brascamp--Lieb--Luttinger rearrangement inequality \cite{BrascampLiebLuttinger} (see also \cite{Banuelos_latala_mendez}), imply the stronger fact that, for every fixed $T>0$, the survival probability $\mathbb P_0(\tau_\Omega>T)$ is maximized by the Euclidean ball centered at the origin among sets $\Omega$ of fixed volume containing the origin. Luttinger similarly proved that, for every fixed $T>0$, the Dirichlet heat trace is increased by symmetric decreasing rearrangement, and hence is maximized by the ball.

Schmuckenschl\"ager's inequality \cite{Schmuckenschlager} for centrally symmetric convex bodies states that
\begin{equation}\label{eq:schmuck_intersection}\lambda_1(K\cap L)\leq \lambda_1(K)+\lambda_1(L).\end{equation} 
Its proof is
rather constructive up to an averaging argument of Lieb. However,  it can also be seen as an immediate consequence of the Gaussian correlation inequality \cite{Royen, Latala_correlation} for centrally symmetric  convex bodies in Gaussian space $(\R^n, \gamma)$
\[\gamma(K\cap L)\geq \gamma(K)\gamma(L).\]
Indeed it readily implies, using sampled Brownian motion, that for finite $T$
\[\mathbb P_0(\tau_{K\cap L}>T)\geq \mathbb P_0(\tau_{K}>T) \mathbb P_0(\tau_{L}>T).\]

 \subsection{Main results}
 The B-conjecture was proposed by Banaszczyk, and proved in a stronger form by Brascamp--Lieb \cite{BrascampLieb1975} (see also \cite{Cordero_Eskenazis}) and Cordero-Erausquin, Fradelizi and Maurey \cite{CEFM}, which is why we will henceforth call it the B-theorem. It asserts that if $K\subset(\R^n, \gamma_n)$ is a centrally symmetric convex body in standard Gaussian space, and $S$ is any symmetric matrix, then
\[
        t\longmapsto \gamma_n(e^{tS}K)
\]
is log-concave. Curiously, the elegant proof of Brascamp--Lieb went unnoticed, and only resurfaced more than two decades after the publication of \cite{CEFM}. 

Our first main result is a similar convexity property of $\lambda_1$. While the B-conjecture fails for arbitrary convex bodies, the corresponding spectral statement holds without symmetry assumption. 
\begin{theorem}\label{thm:main-logconvex-intro}

Let $K\subset\R^n$ be a convex body and let $P$ be positive definite.  Then the function
\[
f(t)=\lambda_1(P^tK)
\]
is log-convex on $\R$. That is,
\[
f((1-s)t_0+st_1)\le f(t_0)^{1-s}f(t_1)^s
\]
for all $t_0,t_1\in\R$ and $s\in[0,1]$.

Moreover, if $P\in \SL(n)$ and $P\neq I_n$ then $f(t)$ is strictly log-convex.
\end{theorem}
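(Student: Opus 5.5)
The plan is to first prove the weaker statement that $t\mapsto \lambda_1(P^tK)$ is \emph{convex}, and then upgrade to log-convexity by scaling. For the upgrade, note that a positive function $f$ is log-convex on $\R$ if and only if $t\mapsto e^{\beta t}f(t)$ is convex for every $\beta\in\R$. By homogeneity, $\lambda_1(e^{\alpha t}P^tK)=e^{-2\alpha t}\lambda_1(P^tK)$. Replacing $P$ by the positive definite matrix $e^{\alpha}P$ therefore turns convexity for all positive definite $P$ into convexity of $e^{-2\alpha t}f(t)$ for every $\alpha$, which is log-convexity of $f$. Write $P=e^{S}$ with $S$ symmetric; after a rotation we may take $S$ diagonal. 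The goal then becomes convexity of $t\mapsto\lambda_1(e^{tS}K)$ for every symmetric $S$.

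\emph{Centrally symmetric case.} Here I would follow the sampling scheme of the introduction. Since $\mathbb P_0(\tau_{e^{tS}K}>T)$ is approximated by $\widetilde\gamma_{T,m}\bigl((e^{tS}K)^m\bigr)=\widetilde\gamma_{T,m}\bigl(e^{t\widetilde S}K^m\bigr)$ with $\widetilde S=\mathrm{diag}(S,\dots,S)$, a change of variables brings $\widetilde\gamma_{T,m}$ to a standard Gaussian. Under this change, $K^m$ becomes a centrally symmetric convex body $L$ and $\widetilde S$ becomes a conjugate symmetric matrix; since $\widetilde\gamma_{T,m}$ has covariance $\Sigma\otimes I_n$, which commutes with $\widetilde S$, the dilation structure survives. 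The B-theorem then gives log-concavity in $t$ of $\gamma(e^{t\widetilde S'}L)$. Letting $m\to\infty$, log-concavity is preserved, so $t\mapsto\log\mathbb P_0(\tau_{e^{tS}K}>T)$ is concave. Dividing by $T$ and letting $T\to\infty$ via Kac's formula \eqref{eq:kac-intro} shows that $-\tfrac12\lambda_1(e^{tS}K)$ is concave, i.e.\ $\lambda_1(e^{tS}K)$ is convex. The starting point $0$ is interior by symmetry and lies in the interior of $e^{tS}K$ for all $t$. Combined with the scaling step, this settles the symmetric case.

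\emph{General convex bodies.} The B-theorem fails here, so I would argue directly at the level of $\lambda_1$ through a second-variation computation. Pulling back to $K$, we have $\lambda_1(e^{tS}K)=\inf_u \int_K|e^{-tS}\nabla u|^2/\int_K u^2$. This is the bottom eigenvalue of the analytic family $-\mathrm{div}(e^{-2tS}\nabla)$ on the fixed domain $K$, and that eigenvalue is simple. It suffices to check convexity at $t=0$ for all $S$. Let $\varphi$ be the normalized ground state. First-order perturbation theory gives $f'(0)=-2\int\langle S\nabla\varphi,\nabla\varphi\rangle$. The second derivative is $4\int|S\nabla\varphi|^2$ minus a positive term of the form $2\langle \psi, (L-\lambda_1)^{-1}\psi\rangle$, where $\psi$ is the first-order forcing built from $\mathrm{div}(S\nabla\varphi)$; the term is positive because $(L-\lambda_1)^{-1}$ is positive on $\varphi^\perp$. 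It is cleaner to conjugate by $\varphi$ (Doob's $h$-transform), which gives the generator of Brownian motion conditioned to stay in $K$, with invariant measure $\mu=\varphi^2dx$. The negative term then becomes a variance-type quantity for $\mu$, with $\mu$-Dirichlet form $\int|\nabla g|^2\,d\mu$ on its spectral gap. Since $\varphi$ is log-concave by Brascamp--Lieb, $\mu$ is log-concave, and I would control the resolvent term by the Brascamp--Lieb variance inequality $\mathrm{Var}_\mu(g)\le\int\langle(\nabla^2 V)^{-1}\nabla g,\nabla g\rangle d\mu$, where $V=-2\log\varphi$. This is the step I expect to be the main obstacle: one must match the Hessian of $-\log\varphi$ against $\int|S\nabla\varphi|^2$, possibly after an integration by parts exploiting the eigenvalue equation, to get $f''(0)\ge0$. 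If this fails, the fallback is the probabilistic route suggested in the abstract. There the finite-$T$ inequality would only be proved up to $o(T)$ errors, by quantifying how quickly conditioned Brownian motion forgets its starting point, so that the asymmetry of $K$ is invisible in the exponential rate.

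\emph{Strictness.} For $P\in\SL(n)$, $P\neq I_n$, I would examine equality in the above: if $\log f$ is affine on an interval, equality in the Brascamp--Lieb step forces $\nabla\varphi$ to be compatible with $S$ in a rigid way. Combined with $\tr S=0$ and $S\neq0$, this should lead to a contradiction with $\varphi$ vanishing on $\partial K$ for a bounded $K$.
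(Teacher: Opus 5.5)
Your upgrade from convexity to log-convexity by rescaling $P$ matches the paper. So does your centrally symmetric argument: the sampled path, the covariance $\widetilde\Sigma_m\otimes I_n$ commuting with $I_m\otimes S$, the B-theorem, $m\to\infty$, and Kac's formula.

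\textbf{The non-symmetric case is not proved.} This is the real content of the theorem, and your second-variation route stops at exactly this point. It reduces $f''(0)\ge 0$ to the inequality
\[
2\Bigl(\int_K\langle S\nabla\varphi,\nabla u\rangle\,dx\Bigr)^2\le\Bigl(\int_K|S\nabla\varphi|^2\,dx\Bigr)\Bigl(\int_K|\nabla u|^2\,dx-\lambda_1(K)\int_K u^2\,dx\Bigr)
\]
for all $u\in H^1_0(K)$ with $u\perp\varphi$, and you give no argument for it.
\begin{itemize}
\item Cauchy--Schwarz falls short by more than a factor $2$.
\item After substituting $u=\varphi g$, the resolvent term is a dual Dirichlet norm $\sup_g \ell(g)^2/\int|\nabla g|^2\varphi^2\,dx$, not a variance. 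The functional $\ell$ mixes $\langle S\nabla\log\varphi,\nabla\log\varphi\rangle$ with $S\nabla\log\varphi$, and the sharp constant needs a cancellation between the two.
\item You do not say how the Brascamp--Lieb inequality, with its shape-dependent weight $(-2\nabla^2\log\varphi)^{-1}$, would produce that cancellation.
\item As a warning sign, the Gaussian analogue $\mathrm{Var}_\mu(Q)\le\frac12\int|\nabla Q|^2\,d\mu$, with $\mu$ a Gaussian restricted to a convex set, fails without symmetry although $\mu$ is strongly log-concave. This suggests log-concavity of $\varphi^2$ alone cannot carry your step.
\end{itemize}

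\textbf{Your fallback names the right phenomenon but none of the mechanism.} The paper's route runs as follows.
\begin{itemize}
\item It replaces the B-theorem by the CEFM bound $F''(t)\le 4|S\int x\,d\mu_t|^2$, which holds for every convex set.
\item In whitened coordinates this barycenter is $\Sigma_m^{-1/2}R$ with $R=\E_0[X^m\mid X^m\in K_t^m]$. Its squared norm is the discrete Dirichlet energy $\frac1h\bigl(|R_1|^2+\sum_j|R_{j+1}-R_j|^2\bigr)$.
\item A uniform quasi-stationary estimate (Theorem \ref{thm:discrete_quasi}) gives $R_j=z_h+O(e^{-c\eps T})$ for $\eps m\le j\le(1-\eps)m$.
\item The constant part still costs $|z_h|^2m/T$ and the end segments cost $\eps m^2/T$. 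So one cannot send $m\to\infty$ at fixed $T$; one must work in a diagonal regime $m=T^{1+\delta}$ with $0<\delta<1/3$.
\item That regime needs the discrete Kac formula (Proposition \ref{prop:discrete-kac}).
\end{itemize}
Only with all of this does one get $f_m''\le CT^{3\delta}=o(T)$, which disappears after dividing by $T$.

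\textbf{Strictness is also not established.} An equality analysis in a Brascamp--Lieb step presupposes the unproved route. If convexity comes from a limiting procedure, as in your fallback, there is no equality case to analyze at all. The paper uses two facts instead:
\begin{itemize}
\item $t\mapsto\lambda_1(P^tK)$ is real analytic. This follows from Kato's theory, since the pulled-back form has $t$-independent domain $H^1_0(K)$ and $\lambda_1$ is simple.
\item $g\mapsto\lambda_1(gK)$ is proper on $\SL(n)$. A volume argument gives $r(gK)\to0$ as $g\to\infty$, and $\lambda_1\ge\pi^2/(4r^2)$.
\end{itemize}
For $P\in\SL(n)\setminus\{I_n\}$ one has $P^t\to\infty$ as $t\to\pm\infty$. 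If $\log f$ were affine on an interval, analyticity would make it affine on all of $\R$, contradicting $f\to\infty$ at both ends. You already invoke the analytic family, but you need the properness input as well.
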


We first deduce the centrally symmetric case directly from the B-theorem, and then prove the general case by combining Brownian discretization with a quantitative version of the $L^2$ method of \cite{CEFM}, see section \ref{sec:logconv} for more details.

The Brunn--Minkowski inequality \eqref{eq:BM} is a different convexity property of $\lambda_1$.  Theorem \ref{thm:main-logconvex-intro} implies a strengthening of the Brunn--Minkowski inequality in the case $L=PK$, see section \ref{sec:comparison_BM}.

Theorem \ref{thm:main-logconvex-intro} leads to a new affine position of convex bodies.

\begin{definition}
	For a convex body $K\subset\R^n$ , we say that $K$ is in \emph{Faber--Krahn position} if
	\[
	\lambda_1(K)=\inf_{A\in \SL(n)}\lambda_1(AK).
	\]
\end{definition}

We prove that every convex body has a Faber--Krahn position which is unique up to orthogonal transformations. This answers a question asked by Schmuckenschl\"ager \cite{Schmuckenschlager}.  

Recall that a convex body $K$ is in John's position if the maximal volume ellipsoid inside $K$ is a Euclidean ball. It was proved by John that there exists $A\in\SL(n)$, unique up to orthogonal transformation, such that $AK$ is in John's position.
The Faber--Krahn position can be seen as a spectral version of the John position. Indeed, it is well-known that $\lambda_1(K)^{\frac12}$ is comparable, up to dimension-dependent constant, to $1/r(K)$, where $r(K)$ is the inradius.
It holds that $r(gK)$ is maximized, among $g\in\SL(n)$, exactly at the John position, namely when the unique volume maximizing ellipsoid inside $gK$ is a Euclidean ball.
In fact, $r(P^tK)^{-1}$ is log-convex for any positive-definite $P$, similarly to $\lambda_1$ in Theorem \ref{thm:main-logconvex-intro}. This fact is likely well-known, but for lack of reference we include a proof, see Proposition \ref{prop:inradius-logconvex}.

The Euler-Lagrange condition for the Faber--Krahn position has a simple form recorded in \cite{Schmuckenschlager},
see also Proposition \ref{prop:EL}.   If $\psi$ is the positive first Dirichlet eigenfunction, normalized by $\int_K\psi^2=1$, then in Faber--Krahn position
\begin{equation}\label{eq:spectral-isotropy-intro}
        \int_K \nabla \psi\otimes \nabla \psi\,dx=\frac{\lambda_1(K)}{n}I,
\end{equation}
and log-convexity turns it into a necessary and sufficient condition. Thus the ground-state energy is distributed isotropically among all directions in Faber--Krahn position. 

Theorem \ref{thm:main-logconvex-intro} and the uniqueness of the Faber--Krahn position have several immediate consequences.  A convex body whose symmetry group acts irreducibly on $\R^n$ is automatically in Faber--Krahn position.  In particular, the regular simplex minimizes $\lambda_1$ among all simplices of fixed volume, in every dimension. This gives a new proof of 
this fact, which is normally proved using  the Steiner symmetrization. The same argument shows that a regular polygon minimizes $\lambda_1$ among all of its area-normalized linear images.  This gives a novel application: the affine-orbit part of the P\'olya--Szeg\H{o} polygonal conjecture, stating that the regular $m$-gon minimizes the first Dirichlet eigenvalue among all $m$-gons of the same area (see \cite[Conjecture 5.1.19]{LMP} and \cite{BucurBogosel} for background and recent developments). We refer to section \ref{sec:symm} for details.

In addition to Theorem  \ref{thm:main-logconvex-intro} for the inverse inradius, we prove it as well for the planar Cheeger constant. The latter uses the planar characterization of the Cheeger set of a convex body due to Kawohl and Lachand-Robert \cite{KawohlLachandRobert}.
In light of these results, it appears natural to ask whether the first Dirichlet $p$-Laplacian eigenvalue $\lambda_{1,p}(K)$ satisfies the same log-convexity property for general $1< p<\infty$, see section \ref{sec:inradius}.

Our second Gaussian departure point is the Gaussian conjugate Rogers-Shephard inequality of Milman, Nakamura and Tsuji \cite{Milman_Nakamura_Tsuji}, which states that for centrally symmetric convex sets $A,B$ and any centered Gaussian measure $\gamma$,
\begin{equation}\label{eq:MNT-intro}
	\gamma(A)\gamma(B)\le \gamma(A\cap B)\gamma(A+B),
\end{equation}
refining the Gaussian correlation inequality.

The spectral consequence of \eqref{eq:MNT-intro} strengthens Schmuckenschl\"ager's inequality \eqref{eq:schmuck_intersection}. 
\begin{theorem}\label{thm:improved-schmuck}
	Let $K,L\subset\R^n$ be centrally symmetric convex bodies.  Then
	\begin{equation}\label{eq:improved-schmuck}
		\lambda_1(K\cap L)+\lambda_1(K+L)
		\le
		\lambda_1(K)+\lambda_1(L).
	\end{equation}
\end{theorem}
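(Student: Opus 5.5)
We will follow the Brownian-discretization scheme from the introduction, exactly as Schmuckenschl\"ager's inequality \eqref{eq:schmuck_intersection} follows from the Gaussian correlation inequality. Fix $T>0$ and $m\in\N$, and let $\widetilde\gamma_{T,m}$ be the centered Gaussian law of the sampled path $X^m=(X_{T/m},\dots,X_T)$ started at $0$. The sets $A=K^m$ and $B=L^m$ are centrally symmetric convex sets in $(\R^n)^m$. Moreover, $A\cap B=(K\cap L)^m$ and $A+B=(K+L)^m$, since Minkowski addition of products is coordinatewise. Applying \eqref{eq:MNT-intro} to $\gamma=\widetilde\gamma_{T,m}$ gives
\[
\widetilde\gamma_{T,m}(K^m)\,\widetilde\gamma_{T,m}(L^m)\le \widetilde\gamma_{T,m}((K\cap L)^m)\,\widetilde\gamma_{T,m}((K+L)^m).
\]

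Next we let $m\to\infty$, taking $m=2^k$ so that the sampling times are nested. For a convex body $M$ containing $0$ in its interior, the events $\{X^{2^k}\in M^{2^k}\}$ decrease in $k$. By path continuity and closedness of $M$, they converge to $\{X_t\in M\ \forall t\le T\}$. This event differs from $\{\tau_M>T\}$ only by a null event: Brownian motion that touches $\partial M$ of a convex body a.s. exits immediately, since convex domains satisfy the exterior cone condition. Monotone convergence therefore yields
\[
\mathbb P_0(\tau_K>T)\,\mathbb P_0(\tau_L>T)\le \mathbb P_0(\tau_{K\cap L}>T)\,\mathbb P_0(\tau_{K+L}>T)
\]
for every $T>0$. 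Central symmetry guarantees $0\in\intt(K\cap L)$, so all four bodies contain the origin as an interior point.

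Finally, we take $-\frac1T\log$ of both sides and let $T\to\infty$. Kac's formula \eqref{eq:kac-intro} applies to each factor because each limit exists individually. The inequality reverses under $-\log$, giving $\frac12\lambda_1(K)+\frac12\lambda_1(L)\ge \frac12\lambda_1(K\cap L)+\frac12\lambda_1(K+L)$, which is \eqref{eq:improved-schmuck}.

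The only delicate point is the limit $m\to\infty$, namely identifying the sampled events with the survival event up to a null set, including the boundary-touching issue. For convex bodies this is standard. As a fallback, one can bound $\mathbb P_0(\tau_M>T)$ from both sides by the sampled probabilities for $M$ and for a slight dilate $(1+\eps)M$, and then let $\eps\to0$ using continuity of $\lambda_1$ under dilation, $\lambda_1((1+\eps)M)=(1+\eps)^{-2}\lambda_1(M)$. This route avoids exact identification of the events entirely.
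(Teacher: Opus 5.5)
Your proposal is correct and follows essentially the same route as the paper. Both apply the Milman--Nakamura--Tsuji inequality to $K^m$ and $L^m$ under the Gaussian law of the sampled Brownian path, use $K^m\cap L^m=(K\cap L)^m$ and $K^m+L^m=(K+L)^m$, let $m\to\infty$, and conclude with Kac's formula. Your added care with the $m\to\infty$ limit (dyadic nesting, regularity of convex boundary points, or the dilation sandwich) fills in details the paper leaves implicit.
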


Eq. \eqref{eq:improved-schmuck} similarly holds for the first eigenvalue  of the Ornstein--Uhlenbeck operator with Dirichlet condition.

Finally for the general $p$-Laplacian we formulate an improved Schmuckenschl\"ager inequality as an open problem, and
prove the weaker inequality
\[
        \lambda_{1,p}(K\cap L)^{1/p}
        \le
        \lambda_{1,p}(K)^{1/p}+\lambda_{1,p}(L)^{1/p},
\]
see Theorem \ref{thm:p-schmuck}.

\subsection*{Acknowledgements}  The authors would like to thank Emanuel Milman for explaining to us Theorem \ref{thm:KM}, Liran Rotem  and Rapha\"el Pothier for useful discussions, as well as Richard Laugesen and Rodrigo Ba\~nuelos for helpful comments on an earlier version of the paper.
The research of D.F. was partially supported by an NSERC Discovery Grant. The research of I.P. was partially supported by NSERC and FRQNT.
\subsection*{AI usage disclosure}The authors made use of ChatGPT for mathematical discussions and editorial assistance.  In particular, it has brought to our attention the idea to prove Theorem \ref{thm:improved-schmuck} using inequality \eqref{eq:MNT-intro}. It has also contributed to the development of  arguments in section \ref{subsec:discrete-quasi} and to the proof of Proposition~\ref{prop:cheeger-logconvex}. 
All mathematical claims and proofs were independently checked, revised, and written in their final form by the authors.

\section{Log-convexity, the Faber--Krahn position and applications}

\subsection{The log-convexity theorem}
\label{sec:logconv}
	Consider the space of left cosets $\mathcal P_n= \OO(n) \backslash \GL(n)=\{ \OO(n)g: g\in \GL(n)\}$, which one may identify with the cone of positive definite matrices through $\OO(n)g\mapsto g^Tg$, equipped with the standard Riemannian metric turning it into a symmetric space of rank $n$. Its geodesics are given by $\gamma_{g, S}(t)=\OO(n)e^{tS}g$ with $S$ symmetric and $g\in\GL(n)$, see \cite{Helgason} and \cite[Chapter 6]{Bhatia} for details.

 Thus Theorem \ref{thm:main-logconvex-intro} can be equivalently stated as follows.

\begin{theorem}\label{thm:log_convex2}
	Given a fixed convex body $K\subset\R^n$, define a smooth function on $\mathcal P_n$ by $\Lambda_K(\OO(n)g):=\lambda_1(g K)$. Then $\log\Lambda_K$ is geodesically convex on $\mathcal P_n$, and strictly convex along any geodesic $\gamma_{g, S}$ with $S$ non-scalar.
\end{theorem}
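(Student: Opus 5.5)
The statement is a reformulation of Theorem \ref{thm:main-logconvex-intro}. The plan is to check that $\Lambda_K$ is well defined and smooth on $\mathcal P_n$, and then to reduce geodesic convexity to log-convexity along curves $t\mapsto P^tK'$. All the analytic difficulty is contained in Theorem \ref{thm:main-logconvex-intro}, which is stated earlier and which I will invoke rather than reprove.

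\emph{Well-definedness and smoothness.} The Laplacian is invariant under orthogonal maps. Hence $\lambda_1(OgK)=\lambda_1(gK)$ for $O\in\OO(n)$, and $\Lambda_K$ depends only on the coset $\OO(n)g$. For smoothness, I pull the Dirichlet problem on $gK$ back to the fixed domain $\intt(K)$ via $x\mapsto gx$. The first eigenvalue of $-\Delta$ on $gK$ equals the first Dirichlet eigenvalue of the operator $-\operatorname{div}\bigl((g^Tg)^{-1}\nabla\,\cdot\,\bigr)$ on $\intt(K)$. This is a uniformly elliptic operator whose coefficients depend analytically on the point $g^Tg\in\mathcal P_n$, and the first Dirichlet eigenvalue is simple. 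Standard analytic perturbation theory, applied to the quadratic forms on $H^1_0(\intt K)$, then shows that $\Lambda_K$ is real-analytic, in particular smooth, on $\mathcal P_n$.

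\emph{Convexity along geodesics.} Fix a geodesic $\gamma_{g,S}(t)=\OO(n)e^{tS}g$ with $S$ symmetric, and set $K'=gK$, which is again a convex body. Then
\[
\Lambda_K(\gamma_{g,S}(t))=\lambda_1(e^{tS}K')=\lambda_1(P^tK'),\qquad P:=e^S,
\]
where $P$ is positive definite and $P^t=e^{tS}$. By Theorem \ref{thm:main-logconvex-intro}, $t\mapsto \log\lambda_1(P^tK')$ is convex on $\R$. Since every geodesic of $\mathcal P_n$ has this form, $\log\Lambda_K$ is geodesically convex.

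\emph{Strictness.} Suppose $S$ is non-scalar. Write $S=cI+S_0$ with $c=\tr S/n$, so that $\tr S_0=0$ and $S_0\neq0$. By the scaling $\lambda_1(rK)=r^{-2}\lambda_1(K)$,
\[
\log\lambda_1(e^{tS}K')=-2ct+\log\lambda_1(Q^tK'),\qquad Q:=e^{S_0}.
\]
Here $Q\in\SL(n)$ because $\det Q=e^{\tr S_0}=1$, and $Q\neq I_n$ because $S_0\neq 0$. The strict part of Theorem \ref{thm:main-logconvex-intro} therefore gives strict convexity of the second term. Adding the linear term $-2ct$ preserves strict convexity, which proves the claim.

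The only substantive input is Theorem \ref{thm:main-logconvex-intro} itself, and that is where the real obstacle lies. In the symmetric case it comes from the B-theorem applied to sampled Brownian paths: one uses the log-concavity of $t\mapsto\widetilde\gamma_{T,m}\bigl((e^{tS}K)^m\bigr)$, lets $m\to\infty$, and then passes to $\lambda_1$ via Kac's formula \eqref{eq:kac-intro}. For non-symmetric $K$ the B-theorem fails, and a quantitative version of the $L^2$ method of \cite{CEFM} for conditioned Brownian motion is needed. The reduction above does not touch this difficulty.
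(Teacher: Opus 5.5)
Your proposal is correct and follows the paper's route: the paper presents this statement as an equivalent reformulation of Theorem \ref{thm:main-logconvex-intro}, via $\gamma_{g,S}(t)=\OO(n)e^{tS}g\mapsto \lambda_1(P^t(gK))$ with $P=e^S$, and all the substance lies in that theorem's proof in Section \ref{sec:proof_convexity}. Your splitting $S=cI+S_0$ uses $\lambda_1(rK)=r^{-2}\lambda_1(K)$ to reduce strictness for non-scalar $S$ to the $\SL(n)$ clause of that theorem, and it correctly fills in the one step the paper leaves implicit.
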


A similar phenomenon occurs for many natural functions of convex bodies, see section \ref{sec:isotropic_discussion} for some examples.

The proof of Theorem \ref{thm:main-logconvex-intro} is given in Section \ref{sec:proof_convexity}. The main difficulty in the argument 
is the passage from centrally symmetric bodies to general convex 
bodies. In the symmetric case the result follows directly from the Gaussian B-theorem, since the relevant conditioned Gaussian measures have vanishing barycenter. Without symmetry, the $L^2$-argument underlying the B-theorem produces an additional barycenter term, and there is no reason for it to vanish. Our key observation is that, after discretizing a Brownian path conditioned to remain in $P^tK$ for a long time, its conditional mean is nearly constant away from the initial and terminal time segments. The inverse covariance matrix of the sampled Brownian path is a discrete Laplacian in the time variable, and therefore it is essentially insensitive to this almost constant component. As a result, the obstruction to log-concavity is sublinear in the length of the path and disappears after division by time in Kac's formula. Establishing this rigorously requires the uniform quasi-stationary estimate, the discretized Kac formula, and the quantitative $L^2$-bound developed in sections \ref{subsec:discrete-quasi}--\ref{sec:proof-logconvex}.

\subsection{Comparison to the Brunn--Minkowski inequality}\label{sec:comparison_BM}
Consider a pair of convex bodies $K$ and $L=PK$ with $P$ positive definite. In order to compare the Brunn--Minkowski inequality to the log-convexity of $\lambda_1(P^tK)$, we will need a simple lemma.

\begin{lemma}\label{lem:positive_monotone}
	If $P_1\leq P_2$ are two commuting positive-definite matrices, then it holds for any convex $K$ that $\lambda_1(P_1K)\geq \lambda_1(P_2K)$.  
\end{lemma}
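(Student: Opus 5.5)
The plan is to reduce to a one-parameter family of the form treated in Theorem \ref{thm:main-logconvex-intro} and combine log-convexity with a boundedness argument; direct domain monotonicity is not available. Since $P_1,P_2$ commute and are positive definite, they are simultaneously diagonalizable in an orthonormal basis. Hence $Q:=P_2^{-1}P_1$ is positive definite, commutes with both, and has eigenvalues $q_i=\mu_i/\nu_i\in(0,1]$, where $\mu_i\le\nu_i$ are the eigenvalues of $P_1$ and $P_2$ in that basis, ordered compatibly. This uses $P_1\le P_2$ together with commutativity. Set $K':=P_2K$, so that $P_1K=QK'$. The claim becomes $\lambda_1(QK')\ge\lambda_1(K')$.

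One might try to show $QK'\subset K'$ and invoke domain monotonicity. This fails for non-symmetric bodies, since a diagonal contraction need not map a tilted body into itself. Instead I will use Theorem \ref{thm:main-logconvex-intro} with $P=Q$. The function $g(t)=\log\lambda_1(Q^tK')$ is convex on $\R$, and we want $g(1)\ge g(0)$. A convex function on $\R$ that is bounded above on $(-\infty,0]$ is nondecreasing on all of $\R$. Indeed, if $g(a)>g(b)$ for some $a<b$, then convexity forces $g(t)\ge g(a)+\frac{g(a)-g(b)}{b-a}(a-t)\to\infty$ as $t\to-\infty$. So it suffices to bound $\lambda_1(Q^tK')$ above uniformly for $t\le0$.

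For the bound, pick a ball $B(x_0,r)\subset K'$. Then $Q^tK'\supset Q^tB(x_0,r)$, which is a translate of the ellipsoid $Q^tB(0,r)$. For $t\le 0$ its semi-axes are $r q_i^{t}\ge r$, because $q_i\le1$. So this ellipsoid contains a ball of radius $r$. By translation invariance and domain monotonicity of $\lambda_1$, we get $\lambda_1(Q^tK')\le\lambda_1(B(0,r))$ for all $t\le0$. Therefore $g$ is nondecreasing, and
\[
\lambda_1(P_1K)=\lambda_1(QK')=e^{g(1)}\ge e^{g(0)}=\lambda_1(K')=\lambda_1(P_2K).
\]

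The only delicate point is logical rather than technical. We must make sure the proof of Theorem \ref{thm:main-logconvex-intro} in Section \ref{sec:proof_convexity} does not itself rely on this lemma, so that the argument is not circular. If it did, I would fall back on a direct argument, for instance a variational comparison after a linear change of variables in the Rayleigh quotient. The remaining steps are elementary.
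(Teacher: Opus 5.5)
Your argument is correct, and the circularity you flag does not occur. The proof of Theorem \ref{thm:main-logconvex-intro} in Section \ref{sec:proof_convexity} never uses this lemma; the passage from convexity to log-convexity in Lemma \ref{lem:convex_logconvex} uses only the scaling identity $c^{2t}\lambda_1(P^tK)=\lambda_1((P/c)^tK)$. Your remaining steps are sound: the facts about $Q=P_2^{-1}P_1$, the monotonicity of a convex function bounded above on $(-\infty,0]$, and the inclusion $Q^tB(0,r)\supset B(0,r)$ for $t\le 0$.

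The paper takes exactly the route you held in reserve. Pulling the Rayleigh quotient on $P_jK$ back to $K$ gives $\lambda_1(P_jK)=\inf_{v\in H^1_0(K)}\int_K\langle P_j^{-2}\nabla v,\nabla v\rangle\,dx\big/\int_K v^2\,dx$. Commutativity then upgrades $P_1\le P_2$ to $P_1^{-2}\ge P_2^{-2}$, and the inequality is immediate. This is where the commuting hypothesis enters, since squaring is not operator monotone in general. You use commutativity differently, to write $P_1K=QP_2K$ with $0<Q\le I$.

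The paper's argument is two lines, elementary, and valid for arbitrary domains; convexity plays no role. Yours imports the entire non-symmetric log-convexity machinery---the B-theorem, quasi-stationary estimates, and the discrete Kac formula---and is therefore confined to convex bodies, where Theorem \ref{thm:main-logconvex-intro} is available. What your route does show is that log-convexity along a positive-definite flow, combined with a one-sided bound, already forces monotonicity. For this lemma, though, the direct variational comparison is both simpler and more general.
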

\begin{proof}
	It holds that
\begin{align*}\lambda_1(P_jK)=\inf_{u\in H^1_0(P_jK)}\frac{\int_{P_jK} |\nabla u|^2}{\int_{P_jK} u^2}&= \inf_{v\in H^1_0(K)}\frac{\int_{K} |P_j^{-1}\nabla v|^2}{\int_{K} v^2}\\&=  \inf_{v\in H^1_0(K)}\frac{\int_{K} \langle P_j^{-2}\nabla v,\nabla v\rangle}{\int_{K} v^2}.\end{align*}
As $P_1^{-2}\geq P_2^{-2}$, this concludes the proof.
\end{proof}
As $ \frac12 (I_n+P)K\subset\frac12 (K+PK)$, one deduces from domain monotonicity and Lemma \ref{lem:positive_monotone} that
\begin{equation}\label{eq:3_eigenvalues}\lambda_1\left( \frac12 (K+PK)\right) \leq  \lambda_1\left( \frac12 (I_n+P)K\right)  \leq  \lambda_1( P^{1/2}K).\end{equation}
	
Denote the $p$-mean by
\[
M_p(a,b)
=
\left(\frac12a^p+\frac12 b^p\right)^{1/p}.\]
Recall that $M_0(a,b)=\sqrt{ab}$, and $M_p(a,b)\leq M_q(a,b)$ for $p\leq q$. 
	
Now the Brunn--Minkowski inequality \eqref{eq:BM} implies 
\[ \lambda_1\left( \frac12 (K+PK)\right) \leq M_{-1/2}(\lambda_1(K), \lambda_1(PK)),\]
while Theorem \ref{thm:main-logconvex-intro} implies

\[\lambda_1( P^{1/2}K)\leq M_0(\lambda_1(K), \lambda_1(PK)).\]

The next corollary shows that the middle eigenvalue in \eqref{eq:3_eigenvalues} satisfies

\[\lambda_1\left( \frac12 (I_n+P)K\right) \leq M_{-1/2}(\lambda_1(K), \lambda_1(PK)),\] thus refining the Brunn--Minkowski inequality in this case.
\begin{corollary}\label{cor:BM_improved}
	Let $K\subset\R^n$ be a convex body, and $P$ a positive definite matrix. Then 
	\[t\mapsto \lambda_1\Big(\big((1-t)I_n+tP\big)K\Big)^{-1/2}\]
	is concave.
\end{corollary}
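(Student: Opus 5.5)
The plan is to prove a stronger statement: $\lambda_1^{-1/2}$ is concave as a function of an arbitrary positive diagonal matrix in a fixed eigenbasis of $P$. The corollary is then its restriction to an affine line.

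\textbf{Setup.} Diagonalize $P=U^T D U$ with $U\in\OO(n)$ and $D=\mathrm{diag}(p_1,\dots,p_n)$. Since $\lambda_1$ is invariant under orthogonal maps, we may replace $K$ by $K'=UK$ and $P$ by $D$. For $c\in(0,\infty)^n$ set
\[ g(c)=\lambda_1(\mathrm{diag}(c)K')^{-1/2}. \]
Then $\lambda_1\big(((1-t)I_n+tP)K\big)^{-1/2}=g\big((1-t)\mathbf 1+tp\big)$, where $\mathbf 1=(1,\dots,1)$ and $p=(p_1,\dots,p_n)$. This holds on the interval of $t$ where all entries $1-t+tp_i$ are positive, which contains $[0,1]$. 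Since an affine line meets the open orthant in an interval, it suffices to show that $g$ is concave on $(0,\infty)^n$.

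\textbf{Step 1: three properties of $g$.}
\begin{enumerate}[label=(\roman*)]
\item $g$ is positively $1$-homogeneous, because $\lambda_1(rL)=r^{-2}\lambda_1(L)$.
\item $g$ is nondecreasing in each coordinate. Diagonal matrices commute, so Lemma \ref{lem:positive_monotone} applies.
\item $x\mapsto \log g(e^{x})$ is concave on $\R^n$, where $e^x$ is taken coordinatewise. Given $x,y\in\R^n$, put $g_0=\mathrm{diag}(e^x)$ and $S=\mathrm{diag}(y-x)$. Then $\mathrm{diag}(e^{(1-s)x+sy})K'=e^{sS}g_0K'$, and this is the geodesic $\gamma_{g_0,S}$. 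By Theorem \ref{thm:log_convex2} (equivalently, Theorem \ref{thm:main-logconvex-intro} applied to the body $g_0K'$ with positive definite matrix $e^{S}$), $\log\lambda_1$ is convex along it. Hence $-\tfrac12\log\lambda_1$ is concave along it.
\end{enumerate}

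\textbf{Step 2: from multiplicative to ordinary log-concavity.} For $c,d$ in the orthant and $s\in[0,1]$, the coordinatewise AM--GM inequality gives $(1-s)c+sd\ge c^{1-s}d^{s}$. Monotonicity (ii) and then (iii) give
\[ g((1-s)c+sd)\ \ge\ g(c^{1-s}d^s)\ \ge\ g(c)^{1-s}g(d)^s . \]
So $g$ is log-concave, and in particular quasi-concave, on the orthant.

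\textbf{Step 3: homogeneity upgrades this to concavity.} I will use the standard fact that a positive, $1$-homogeneous, quasi-concave function on a convex cone is concave. Given $c,d$, set $\hat c=c/g(c)$ and $\hat d=d/g(d)$, so both lie in $\{g\ge1\}$. This superlevel set is convex by Step 2. Write
\[ (1-s)c+sd=\big((1-s)g(c)+sg(d)\big)\big(\theta\hat c+(1-\theta)\hat d\big),\qquad \theta=\frac{(1-s)g(c)}{(1-s)g(c)+sg(d)} . \]
Then $g(\theta\hat c+(1-\theta)\hat d)\ge 1$, and homogeneity (i) yields $g((1-s)c+sd)\ge(1-s)g(c)+sg(d)$. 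Restricting to the line $t\mapsto(1-t)\mathbf 1+tp$ gives the corollary.

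\textbf{Main obstacle.} Everything rests on step (iii), the log-convexity from Theorem \ref{thm:main-logconvex-intro} applied along the diagonal flat with arbitrary base points. The rest is elementary convexity. The only care needed is to keep $c$ strictly inside the positive orthant, which holds on the relevant interval of $t$.
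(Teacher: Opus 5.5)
Your proof is correct and is essentially the paper's argument. It uses the same ingredients: weighted AM--GM for commuting matrices, Lemma~\ref{lem:positive_monotone}, the log-convexity of Theorem~\ref{thm:main-logconvex-intro}, and the scaling $\lambda_1(rK)=r^{-2}\lambda_1(K)$. Your homogeneity step plays the role of the paper's constant $c_{t,s}$ and its explicit optimization over $s$; the paper's optimal $s$ is exactly your $1-\theta$. Working on the whole diagonal orthant is a tidy repackaging that covers every subinterval of the positive-definite range at once, whereas the paper proves the inequality on $[0,1]$ and then re-bases $K$ and $P$ to handle a general interval.
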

\begin{proof}
Let \(0< t< 1\). For any $0<s<1$, the weighted AM--GM inequality gives
\[(1-t)I_n+t P=(1-s)\frac{1-t}{1-s}I_n+s\frac{t}{s}P\geq c_{t,s}P^s,\]
where
\[c_{t,s}=\frac{(1-t)^{1-s}t^s}{(1-s)^{1-s}s^s}.\]

Putting
\[a=\lambda_1(K),\qquad b=\lambda_1(PK),\]
 Lemma \ref{lem:positive_monotone} together with Theorem \ref{thm:main-logconvex-intro} yield
\[\lambda_1\Big(\big((1-t)I_n+t P\big)K\Big)\leq
c_{t,s}^{-2}\lambda_1(P^sK) \leq c_{t,s}^{-2}a^{1-s}b^s. \]
The right hand side is minimized in $(0, 1)$ by $s$ which satisfies
\[ \frac{s}{1-s}=\frac{t}{1-t}\sqrt{\frac{a}{b}},\]
and so we get 
\[\inf_{0<s<1}c_{t,s}^{-2}a^{1-s}b^s=\left(
\frac{1-t}{\sqrt{a}}+\frac{t}{\sqrt{b}}\right)^{-2},\]
implying
\[ \lambda_1\bigl(((1-t)I_n+t P)K\bigr) ^{-1/2}\geq  (1-t)\lambda_1(K)^{-1/2}+t\lambda_1(PK)^{-1/2}.\]
For $t$ in a general interval $(t_0, t_1)$, simply apply the previous inequality with $(K, P)$ replaced by \[K'=((1-t_0)I_n+t_0P)K, \quad P'=((1-t_0)I_n+t_0P)^{-1}((1-t_1)I_n+t_1P).\] 
\end{proof}
\subsection{The Faber--Krahn position}

 The following result is a key consequence of Theorem \ref{thm:main-logconvex-intro}.
\begin{theorem}[Existence and uniqueness of Faber--Krahn position]\label{thm:FK-position}
Let $K\subset\R^n$ be a convex body. Then there exists $A_0\in \SL(n)$ such that $A_0K$ is in Faber--Krahn position, that is
\[
        \lambda_1(A_0K)=\inf_{A\in\SL(n)}  \lambda_1(AK).
\]
If $A_0K$ and $A_1K$ are both in  Faber--Krahn position, then there is an orthogonal map $U\in \OO(n)$ such that
\[
        A_1=U A_0.
\]
\end{theorem}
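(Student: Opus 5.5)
The plan is to work on the symmetric space $\mathcal P_n$ and use Theorem \ref{thm:log_convex2}. Since $\lambda_1(UgK)=\lambda_1(gK)$ for $U\in\OO(n)$, the function $\Lambda_K$ is well defined on cosets, and minimizing over $\SL(n)$ amounts to minimizing over the totally geodesic submanifold $\mathcal P_n^1=\{\OO(n)g:\det g=1\}$. Every $g\in\SL(n)$ has a polar decomposition $g=UP$ with $U\in\OO(n)$ and $P$ positive definite of determinant one. So it suffices to study $\phi(P)=\lambda_1(PK)$ on positive definite matrices of determinant one.

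For \textbf{existence} I will prove coercivity. If $P$ has eigenvalues $\mu_1\le\dots\le\mu_n$ with $\prod\mu_i=1$ and $\mu_1\to 0$, then $PK$ is contained in a slab of width at most $\mu_1\operatorname{diam}(K)$. By domain monotonicity and the explicit eigenvalue of a slab, $\lambda_1(PK)\ge \pi^2/(\mu_1\operatorname{diam}K)^2\to\infty$. Hence any minimizing sequence has eigenvalues bounded away from $0$, and by the determinant constraint also bounded above. It therefore lies in a compact subset of positive definite matrices. Continuity of $\lambda_1$ in $P$ follows, for instance, from the variational formula in the proof of Lemma \ref{lem:positive_monotone}, which expresses $\lambda_1(PK)$ as an infimum over $H^1_0(K)$ of a quadratic form depending continuously on $P$. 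This continuity gives a minimizer $P_0$, and we take $A_0=P_0$.

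For \textbf{uniqueness}, suppose $A_0K$ and $A_1K$ are both in Faber--Krahn position. I will reduce to the case $A_0=I$: replace $K$ by $A_0K$ and $A_1$ by $A_1A_0^{-1}$, so the goal becomes $A_1\in\OO(n)$. Write $A_1=UP$ by polar decomposition, with $P$ positive definite and $\det P=1$. Then $f(t)=\lambda_1(P^tK)$ satisfies $f(0)=f(1)=\inf_{\SL(n)}\lambda_1(\cdot\,K)$, because $\lambda_1(UPK)=\lambda_1(PK)$. Also $f(t)\ge f(0)$ for all $t$, since $P^t\in\SL(n)$.

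If $P\neq I_n$, Theorem \ref{thm:main-logconvex-intro} says $f$ is strictly log-convex. Then
\[
f(1/2)<f(0)^{1/2}f(1)^{1/2}=f(0),
\]
which contradicts minimality. Hence $P=I_n$ and $A_1=U$; undoing the reduction gives $A_1=UA_0$.

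The only mildly delicate step is existence. It needs the coercivity and continuity estimates above, plus the observation that the infimum over $\SL(n)$ can be taken over positive definite matrices. Everything else is a direct consequence of the strict log-convexity in Theorem \ref{thm:main-logconvex-intro}.
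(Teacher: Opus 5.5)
Your proof is correct. The uniqueness half is exactly the paper's argument: polar decomposition $T=BP$ of the map between two minimizers, followed by a contradiction with the strict log-convexity of $t\mapsto\lambda_1(P^tK_0)$. Existence has the paper's structure (properness plus continuity), but you prove properness by a different and more elementary route.

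The paper (Proposition~\ref{prop:minimum_existence}) first shows that the inradius $r(gK)\to0$ as $g\to\infty$ in $\SL(n)$. It does this by a volume argument with a cone over a section of the inscribed ball. It then applies the Payne--Stakgold/Protter bound $\lambda_1\ge\pi^2/(4r^2)$, which requires convexity.

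You instead observe that $PK$ lies in a slab of width at most $\mu_{\min}(P)\operatorname{diam}K$, and apply the one-dimensional Poincar\'e inequality across it. This gives the explicit estimate $\lambda_1(gK)\ge\pi^2/(\sigma_{\min}(g)\operatorname{diam}K)^2$ for $g\in\SL(n)$, valid for any bounded $K$. Since $g\to\infty$ in $\SL(n)$ forces $\sigma_{\min}(g)\to0$, your bound would serve equally well where the paper reuses properness, namely the strict log-convexity step and Proposition~\ref{prop:FK_continuity}. The paper's inradius version is less direct but fits its recurring comparison of $\lambda_1$ with $r^{-2}$ and the John position.

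Two small points to tighten. First, a slab has no Dirichlet eigenvalue, so refer to the bottom of its spectrum, or simply to the Poincar\'e inequality on each line parallel to the eigenvector. Second, an infimum of continuously varying quotients is a priori only upper semicontinuous, but the existence of a minimizer needs lower semicontinuity. For continuity of $P\mapsto\lambda_1(PK)$, use the two-sided bound $(1-\epsilon)P^{-2}\le P'^{-2}\le(1+\epsilon)P^{-2}$ for $P'$ near $P$. This gives $|\lambda_1(P'K)-\lambda_1(PK)|\le\epsilon\,\lambda_1(PK)$.
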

\begin{definition}
Define the $\GL(n)$-invariant normalized eigenvalue corresponding to the Faber--Krahn position by
\[
\Lambda_{\FK}(K)=\inf_{A\in \SL(n)}\lambda_1(AK)|K|^{2/n}=\inf_{A\in \GL(n)}\lambda_1(AK)|AK|^{2/n}.
\]
\end{definition}

It is not hard to see that $K\mapsto\Lambda_{\FK}(K)$ is continuous with respect to the Hausdorff metric, see Proposition \ref{prop:FK_continuity}.

The uniqueness of the Faber--Krahn position renders the vanishing of the first variation a necessary and sufficient condition for minimization.

\begin{proposition}[Euler-Lagrange equation]\label{prop:EL}
Let $K$ be in  Faber--Krahn position and let $\psi$ be the positive first Dirichlet eigenfunction on $K$, normalized by
\[
        \int_K \psi^2\,dx=1.
\]
Then
\begin{equation}\label{eq:EL-energy}
        \int_K \nabla \psi\otimes \nabla \psi\,dx
        =\frac{\lambda_1(K)}{n}I.
\end{equation}

Moreover, if eq. \eqref{eq:EL-energy} holds, then $K$ is in Faber--Krahn position.

\end{proposition}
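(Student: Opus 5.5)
We compute the first variation of $\lambda_1$ along the curves $t\mapsto e^{tS}K$ with $S$ symmetric and traceless, and then combine it with the log-convexity of Theorem \ref{thm:main-logconvex-intro}. Since $\lambda_1(UAK)=\lambda_1(AK)$ for $U\in\OO(n)$, and every $A\in\SL(n)$ factors as $A=UP$ with $P=e^{S}$ positive definite of determinant one (polar decomposition), it suffices to consider $f_S(t)=\lambda_1(e^{tS}K)$ with $\tr S=0$.

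First we compute $f_S'(0)$. As in the proof of Lemma \ref{lem:positive_monotone}, pulling back to $K$ gives
\[
f_S(t)=\inf_{v\in H^1_0(K)}\frac{\int_K\langle e^{-2tS}\nabla v,\nabla v\rangle}{\int_K v^2}.
\]
Since $\lambda_1$ is simple, standard analytic perturbation theory (Hellmann--Feynman) applied to this family of uniformly elliptic forms on the fixed domain $K$ shows that $f_S$ is differentiable. Its derivative is obtained by differentiating the quadratic form at the normalized ground state $\psi$:
\[
f_S'(0)=-2\int_K\langle S\nabla\psi,\nabla\psi\rangle=-2\,\tr\Big(S\int_K\nabla\psi\otimes\nabla\psi\Big).
\]
To justify this rigorously without perturbation theory, one can use the variational characterization. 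Testing with $\psi$ gives the upper bound $f_S(t)\le \int_K\langle e^{-2tS}\nabla\psi,\nabla\psi\rangle$, whose right-hand side has the stated derivative at $t=0$. Continuity of the ground state in $H^1_0(K)$ as $t\to0$ then gives the matching bound. Integrating by parts, $\tr\int\nabla\psi\otimes\nabla\psi=\lambda_1(K)$.

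Next, the necessity direction. If $K$ is in Faber--Krahn position, then $f_S$ has a minimum at $t=0$ for every traceless symmetric $S$, so $\tr(SM)=0$ for all such $S$, where $M=\int_K\nabla\psi\otimes\nabla\psi$ is symmetric. The traceless symmetric matrices are the orthogonal complement of $I$ in the space of symmetric matrices under the trace pairing. Hence $M$ is a multiple of $I$, and taking traces identifies the multiple as $\lambda_1(K)/n$, which is \eqref{eq:EL-energy}.

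Finally, the sufficiency direction. If \eqref{eq:EL-energy} holds, then $f_S'(0)=-2\frac{\lambda_1(K)}{n}\tr S=0$ for every traceless symmetric $S$. By Theorem \ref{thm:main-logconvex-intro}, $\log f_S$ is convex on $\R$; being differentiable with zero derivative at $0$, it attains its global minimum at $t=0$. Thus $\lambda_1(e^{S}K)\ge\lambda_1(K)$ for all traceless symmetric $S$. Combined with the polar decomposition and $\OO(n)$-invariance, this gives $\lambda_1(AK)\ge\lambda_1(K)$ for all $A\in\SL(n)$, i.e. $K$ is in Faber--Krahn position.

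The main obstacle is the differentiability of $t\mapsto\lambda_1(e^{tS}K)$ on a merely convex (possibly nonsmooth) domain. The approach above avoids any domain variation: after pulling back to the fixed domain $K$, only the coefficients of the operator depend analytically on $t$, and simplicity of $\lambda_1$ then suffices. Everything else is linear algebra together with the convexity already established.
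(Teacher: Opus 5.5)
Your proof is correct and follows the paper's argument. You pull the Rayleigh quotient back to $K$, differentiate at $t=0$ using simplicity of $\lambda_1$ to get $-2\int_K\langle S\nabla\psi,\nabla\psi\rangle$, read off isotropy from its vanishing for all traceless symmetric $S$, and use log-convexity along $t\mapsto e^{tS}K$ for sufficiency. Your remark that plain log-convexity already makes a critical point a global minimizer (the paper invokes strict log-convexity) and your explicit polar-decomposition reduction are only harmless streamlinings.
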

\begin{remark}
	Formula \eqref{eq:EL-energy} is equivalent to the boundary stress identity 
	\begin{equation}\label{eq:boundary-stress}
		\int_{\partial K}(\partial_\nu \psi)^2 x\otimes \nu\,d\sigma
		=\frac{2\lambda_1(K)}{n}I,
	\end{equation}
	obtained through the application of Hadamard's variational formula \cite[Theorem 4.1]{Frank}.
   Here  $\partial_\nu \psi$ is  understood in the Sobolev trace sense.
\end{remark}

One can recast the uniqueness of the Faber--Krahn position as the possibility to attach a Euclidean metric to a convex body in a $\GL$-equivariant manner, a spectral analogue to the John ellipsoid.  Denote by $\lambda_1(K, E)$ the first eigenvalue of $K$ for the negative Dirichlet Laplacian, defined by the Euclidean structure for which the centered ellipsoid $E$ is the unit ball.

\begin{proposition}
	Among centered ellipsoids of fixed volume, there is a unique ellipsoid $E=E_0$ minimizing $\lambda_1(K, E)$. 
\end{proposition}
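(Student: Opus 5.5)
The plan is to reduce the statement to Theorem \ref{thm:FK-position} by rewriting $\lambda_1(K,E)$ in terms of the standard eigenvalue of a linear image of $K$. Every centered ellipsoid has the form $E=A^{-1}B$ with $B$ the Euclidean unit ball and $A\in\GL(n)$, and $A$ is determined by $E$ up to left multiplication by $\OO(n)$. The map $A$ is an isometry from $(\R^n,\|\cdot\|_E)$ to standard Euclidean space, since $\|Ax\|=1$ exactly when $x\in\partial E$. It therefore intertwines the two Dirichlet Laplacians, and
\[
\lambda_1(K,E)=\lambda_1(AK).
\]
The condition $|E|=|B|$ (or any fixed volume) is equivalent to $|\det A|=1$. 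After composing with a reflection, which does not change $\lambda_1$, we may take $A\in\SL(n)$.

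So minimizing $\lambda_1(K,E)$ over centered ellipsoids of fixed volume is the same as minimizing $\lambda_1(AK)$ over $A\in\SL(n)$, modulo left $\OO(n)$. Here I use that $\lambda_1(UAK)=\lambda_1(AK)$ for $U\in\OO(n)$, which also shows the right-hand side depends only on $E$.

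\emph{Existence.} Theorem \ref{thm:FK-position} gives $A_0\in\SL(n)$ with $A_0K$ in Faber--Krahn position. Set $E_0=A_0^{-1}B$. Then $\lambda_1(K,E_0)=\lambda_1(A_0K)\le\lambda_1(AK)=\lambda_1(K,A^{-1}B)$ for every $A\in\SL(n)$, so $E_0$ is a minimizer.

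\emph{Uniqueness.} Suppose $E_1=A_1^{-1}B$ is another minimizer, with $A_1\in\SL(n)$ after adjusting by a reflection. Then $A_1K$ is also in Faber--Krahn position. By the uniqueness part of Theorem \ref{thm:FK-position}, $A_1=UA_0$ with $U\in\OO(n)$. Hence $E_1=A_0^{-1}U^{-1}B=A_0^{-1}B=E_0$.

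The one step that needs care is the identity $\lambda_1(K,E)=\lambda_1(AK)$. This amounts to the change of variables already carried out in the proof of Lemma \ref{lem:positive_monotone}: with $v=u\circ A$, the Rayleigh quotient on $AK$ becomes $\int_K\langle (A^TA)^{-1}\nabla v,\nabla v\rangle/\int_K v^2$, up to the common Jacobian factor which cancels. The matrix $(A^TA)^{-1}$ is exactly the inverse of the Gram matrix of the inner product whose unit ball is $E$, so this quotient is the Rayleigh quotient for the $E$-Laplacian. Beyond this change of variables, the statement is a reformulation of Theorem \ref{thm:FK-position}, and I expect no real obstacle.
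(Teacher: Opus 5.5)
Your proposal is correct and follows the paper's argument. The paper likewise uses $\lambda_1(gK,E)=\lambda_1(K,g^{-1}E)$, which is equivalent to your $\lambda_1(K,A^{-1}B)=\lambda_1(AK)$, and reads off existence and uniqueness directly from Theorem \ref{thm:FK-position}. Your version simply makes explicit the change of variables, the reflection that puts $A$ in $\SL(n)$, and the fact that the $\OO(n)$-ambiguity disappears in $A^{-1}B$. One small point: if the prescribed volume is not $|B|$, you also need to rescale $A$, which is harmless since $\lambda_1(sAK)=s^{-2}\lambda_1(AK)$.
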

\begin{proof} It holds for $g\in\GL(n)$, $\lambda_1(gK, E)=\lambda_1(K, g^{-1}E)$, and so the uniqueness of an ellipsoid $E=E_0$ of fixed volume minimizing $\lambda_1(K, E)$ is immediate from Theorem \ref{thm:FK-position}. %Equivariance is equally clear.
\end{proof}

\begin{definition}
The \emph{Faber--Krahn ellipsoid} of $K$ %, denoted $E_{\FK}(K)$, 
is defined as $E_{\FK}=x_\psi+\alpha E_0$, where $\alpha>0$ is such that  $\lambda_1(K, \alpha E_0)=\lambda_1(B^n)$, and $x_\psi$ is the  unique maximum of the first eigenfunction $\psi$ of $K$ with respect to $E_0$.
\end{definition}
\noindent The correspondence $K\mapsto E_{\FK}(K)$ is then translation- and $\GL(n)$-equivariant. $K$ is in Faber--Krahn position if and only if $E_{\FK}(K)$ is a Euclidean ball.
 
\begin{lemma}
	The Faber--Krahn ellipsoid is well-defined.
\end{lemma}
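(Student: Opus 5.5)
To check that $E_{\FK}=x_\psi+\alpha E_0$ is well-defined, we verify in turn that $E_0$, $\alpha$ and $x_\psi$ are each uniquely determined by $K$.

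\emph{The ellipsoid $E_0$.} The preceding proposition gives, among centered ellipsoids of a fixed volume $v$, a unique minimizer $E_0=E_0(v)$ of $\lambda_1(K,E)$. We first check that the direction of $E_0$ does not depend on $v$. For $c>0$ we have $\lambda_1(K,cE)=c^{2}\lambda_1(K,E)$, because rescaling the Euclidean structure by $c$ multiplies the Laplacian by $c^2$. Hence
\[
E\mapsto \lambda_1(K,E)
\]
is minimized over volume-$v$ ellipsoids exactly when
\[
E\mapsto \lambda_1\bigl(K,(v'/v)^{1/n}E\bigr)
\]
is minimized over volume-$v'$ ellipsoids. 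It follows that $E_0(v')=(v'/v)^{1/n}E_0(v)$, so the homothety class of $E_0$ is canonical.

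\emph{The scale $\alpha$.} The same scaling identity gives $\alpha\mapsto\lambda_1(K,\alpha E_0)=\alpha^2\lambda_1(K,E_0)$. This is a continuous, strictly increasing bijection $(0,\infty)\to(0,\infty)$, since $\lambda_1(K,E_0)>0$. So there is exactly one $\alpha>0$ with $\lambda_1(K,\alpha E_0)=\lambda_1(B^n)$, namely
\[
\alpha=\bigl(\lambda_1(B^n)/\lambda_1(K,E_0)\bigr)^{1/2},
\]
and the product $\alpha E_0$ does not depend on the initial choice of $v$.

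\emph{The center $x_\psi$.} The first Dirichlet eigenfunction with respect to the Euclidean structure of $E_0$ is the same as the one with respect to $\alpha E_0$, since the two Laplacians differ by a positive constant. The eigenfunction is therefore determined up to a positive multiple, by simplicity of the first eigenvalue, and its maximizers do not depend on that multiple. To see that the maximizer is unique, choose $g\in\GL(n)$ with $gE_0=B^n$. Then $\psi\circ g^{-1}$ is the standard first eigenfunction of the convex body $gK$. By Brascamp--Lieb it is log-concave, and it is real-analytic in the interior. A log-concave function can only attain its maximum on a convex set. If that set contained a segment, then $\log\psi$ would be constant along the segment, hence by analyticity along the entire chord of $gK$ through it. This is impossible, because $\psi$ vanishes on $\partial(gK)$. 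So the maximum is attained at exactly one point, and $x_\psi=g^{-1}(\text{that point})$. Log-concavity is an affine-invariant property, so the argument does not depend on which $g$ we pick.

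Putting the three parts together, $E_{\FK}=x_\psi+\alpha E_0$ is uniquely determined by $K$, which proves the lemma.

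The main obstacle is the uniqueness of the maximum point, since log-concavity by itself only yields a convex set of maximizers; the analyticity argument is what rules out a segment. An alternative route is the known strict log-concavity of the first eigenfunction on convex domains, which would exclude flat pieces directly.
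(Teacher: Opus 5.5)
Your proof is correct, but your key step differs from the paper's. The paper treats $E_0$ and $\alpha$ as obviously well-defined and addresses only the uniqueness of the maximum point. It gets this in one line from the Andrews--Clutterbuck modulus of log-concavity, which makes $\langle \nabla\log\psi(y)-\nabla\log\psi(x),\,y-x\rangle$ strictly negative, with an explicit bound, for all $x\neq y$. Hence $\psi$ has at most one critical point, and in particular a unique maximum.

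You use only the qualitative Brascamp--Lieb log-concavity, which gives a convex set of maximizers. You then exclude a segment of maximizers by two facts:
\begin{itemize}
\item real-analyticity of $\psi$ on the open chord through that segment, which forces $\psi$ to be constant on the whole chord;
\item $\psi\to 0$ at $\partial(gK)$, which contradicts that constancy. This step needs continuity of $\psi$ up to the boundary, which holds because convex domains are regular for the Dirichlet problem; it is worth stating.
\end{itemize}

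Your route is more elementary and self-contained, relying only on classical facts the paper already cites. The paper's route is shorter and quantitative, and it yields the stronger statement that the maximum is the only critical point.

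Your explicit checks fill in what the paper leaves implicit, and they are correct. Via $\lambda_1(K,cE)=c^{2}\lambda_1(K,E)$, you show that the minimizing ellipsoid is canonical up to homothety, independent of the volume normalization. You also show that $\alpha$ is uniquely determined.
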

 \begin{proof}
 	The only point to check is the uniqueness of the maximum of the first Dirichlet eigenfunction.
 	By Andrews-Clutterbuck \cite[eq. (6)]{Andrews_Clutterbuck},
 	\[ \langle \nabla \log \psi(y)-\nabla \log \psi(x), \frac{y-x}{|y-x|}\rangle\leq -\frac{2\pi}{D}\tan\frac{\pi|y-x|}{2D} ,\]
 	where $D=\mathrm{Diam}(K)$. In particular, $\psi$ can't have two distinct critical points,
 and thus it has a unique maximum.
  	\end{proof}
One could fix the center of the Faber--Krahn ellipsoid in a variety of natural manners, for instance by letting it lie at the centroid of $\psi$ or $\psi^2$. However, it seems plausible that with the maximum point definition, we would ultimately obtain a family of ellipsoids corresponding to the $p$-Laplacian, which would continuously interpolate between the John and the Faber--Krahn ellipsoids. 

\subsection{Comparison with other affine positions}\label{sec:isotropic_discussion}

Many natural positions of convex bodies correspond to an extremal value of some geometric quantity $F(K)$ such that $F(P^tK)$ is log convex/concave for positive definite $P$.
In addition to the John position which maximizes the inradius, this is the case for the isotropic position, which minimizes the moment of inertia $\int_K |x|^2dx$, the minimal surface area position, and the minimal mean width position. To our knowledge, the Faber--Krahn position is the first case where the function $F(K)=\lambda_1(K)$ is of spectral rather than of geometric nature.

Moreover, natural positions of convex bodies are often characterized by the isotropicity of some naturally associated measure \cite{Giannopoulos_milman}. This is indeed the case in all aforementioned positions, the role of the isotropic measure taken by the volume measure on $K$ in the isotropic position, the surface area measure $dS_K(\theta)$ on the sphere of unit normals in the minimal surface area position, and $d\nu_K(\theta)=h_K(\theta)d\theta$ on the same sphere in the minimal mean width position. John's position is characterized by the existence of a delta measure supported at the points where the maximal ellipsoid touches the boundary of $K$.

The Faber--Krahn position is no different. Indeed if $K$ is in Faber--Krahn position, and $\psi$ is its first Dirichlet eigenfunction, then by eq. \eqref{eq:EL-energy}, $(\nabla \psi)_*(dx)$ is an isotropic measure.

\subsection{Symmetry and orbit minimizers}
\label{sec:symm}

Let $K\subset\R^n$ have its center of mass at the origin. Let $G(K)\subset \OO(n)$ be the symmetry group of $K$.

\begin{corollary}[Irreducible symmetries]\label{cor:irreducible}
Let $K\subset\R^n$ be a convex body, and let $G(K)$ be its group of orthogonal symmetries.  If $G(K)$ acts irreducibly on $\R^n$, then $K$ is in Faber--Krahn position.  Consequently,
\[
        \lambda_1(AK)\geq \lambda_1(K)
\]
for all $A\in \GL(n)$ with $|\det A|=1$.  Equality holds only when $A$ is orthogonal.
\end{corollary}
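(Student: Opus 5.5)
The plan is to combine the existence/uniqueness Theorem \ref{thm:FK-position} with the fact that the Faber--Krahn position is determined by $K$ up to orthogonal maps, and then force the minimizing coset to be fixed by $G(K)$. By Theorem \ref{thm:FK-position} there is $A_0\in\SL(n)$ with $A_0K$ in Faber--Krahn position, and the set of such minimizers is exactly the coset $\OO(n)A_0$. Equivalently, in the language of Theorem \ref{thm:log_convex2}, the function $\Lambda_K$ on $\mathcal P_n$ restricted to $\det=1$ has a unique minimizer, namely the point $\OO(n)A_0$, which we identify with the positive definite matrix $Q=A_0^TA_0$, $\det Q=1$.

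For $g\in G(K)$ we have $gK=K$, so $\lambda_1(A_0gK)=\lambda_1(A_0K)$. Thus $A_0g$ is also a minimizer, and uniqueness gives $A_0g\in\OO(n)A_0$. Translating to $Q$, this says $g^TQg=Q$ for all $g\in G(K)$. Since $g$ is orthogonal, this means $Q$ commutes with every element of $G(K)$. Each eigenspace of the symmetric matrix $Q$ is then $G(K)$-invariant. Irreducibility therefore forces $Q$ to have a single eigenspace, so $Q=cI$, and $\det Q=1$ gives $Q=I$. Hence $A_0\in\OO(n)$, and since $\lambda_1$ is invariant under orthogonal maps, $\lambda_1(K)=\lambda_1(A_0K)$ is the infimum, i.e.\ $K$ is in Faber--Krahn position. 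Note that we use irreducibility over $\R$ together with the fact that $Q$ is symmetric, so its eigenspaces are real; no Schur-lemma subtleties over $\mathbb C$ arise.

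For the consequences, $|\det A|=1$ means $A\in\SL(n)$, or $A=RA'$ with $R$ a reflection and $A'\in\SL(n)$. The reflection case reduces to the $\SL(n)$ case since $\lambda_1(RA'K)=\lambda_1(A'K)$. So $\lambda_1(AK)\ge\lambda_1(K)$. If equality holds, then $AK$ (or $A'K$) is also in Faber--Krahn position. The uniqueness part of Theorem \ref{thm:FK-position}, applied with $A_0=I$, gives $A'=U\in\OO(n)$, hence $A$ is orthogonal.

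There is no real obstacle here: everything is carried by Theorem \ref{thm:FK-position}. The only point needing care is the translation between ``$A_0g\in\OO(n)A_0$'' and the commutation of $Q$ with $g$. This follows from $(A_0g)^T(A_0g)=g^TQg$, combined with the fact that cosets $\OO(n)B$ are determined by $B^TB$. The standing assumption that the centroid is at the origin only ensures that $G(K)\subset\OO(n)$ is a linear action, which the argument uses.
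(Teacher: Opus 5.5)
Your argument is correct and is essentially the paper's first proof: uniqueness of the Faber--Krahn position up to $\OO(n)$ forces $A_0^TA_0$ to commute with $G(K)$, and irreducibility makes it scalar. You run this directly on the existing minimizer $A_0$ instead of arguing by contradiction over a non-orthogonal $A$, and you also spell out the equality case. One small detail should be added: for $g\in G(K)$ with $\det g=-1$, the map $A_0g$ is not in $\SL(n)$, so apply uniqueness to $RA_0g\in\SL(n)$ for a fixed reflection $R$, exactly as you do in your paragraph on consequences.
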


\begin{proof}
We give two proofs.

First, let $A\in \SL(n)$ be non-orthogonal.  For every $g\in G(K)$, the linear map
\[
        h_g:=AgA^{-1}
\]
stabilizes $AK$.  If $h_g$ is non-orthogonal for some $g\in G(K)$, then Theorem~\ref{thm:FK-position} shows that $AK$ cannot be in Faber--Krahn position.  Otherwise, $AgA^{-1}$ is orthogonal for every $g\in G(K)$.  Therefore
\[
        g^T A^T A g=A^T A,
        \qquad g\in G(K).
\]
Thus the positive symmetric matrix $A^T A$ commutes with the irreducible action of $G(K)$.  Since $A^T A$ is self-adjoint, Schur's lemma implies that $A^T A$ is a scalar matrix.  Since $A\in \SL(n)$, this scalar is $1$, contradicting that $A$ is non-orthogonal.  Hence no non-orthogonal image $AK$ can be in Faber--Krahn position.  Since a Faber--Krahn position exists, it must hold that $K$ itself is in Faber--Krahn position together with its orthogonal images.

The second proof makes use of the variational characterization of the Faber--Krahn position.  Let $\psi$ be the positive $L^2$-normalized first eigenfunction on $K$, and put
\[
        E=\int_K \nabla \psi\otimes\nabla \psi\,dx.
\]
For each $g\in G(K)$, the simplicity of the first eigenvalue gives $\psi\circ g=\psi$, and hence $gEg^{-1}=E$.  Since $E$ is symmetric and $G(K)$ acts irreducibly, Schur's lemma gives $E=cI$.  Taking traces yields $c=\lambda_1(K)/n$.  Thus $K$ satisfies the Euler-Lagrange condition \eqref{eq:EL-energy}, and Proposition~\ref{prop:EL} implies that $K$ is in Faber--Krahn position.
\end{proof}
We draw some conclusions on the minimality of $\lambda_1(K)$ within its fixed volume linear orbit for various  bodies $K$. Results on extrema of eigenvalues within the linear orbit but with a different normalization  appeared in \cite{Laugesen1, Laugesen2}.

\begin{corollary}[Simplices]\label{cor:simplex}
Among all $n$-simplices of fixed volume in $\R^n$, the regular simplex uniquely minimizes the first Dirichlet eigenvalue.
\end{corollary}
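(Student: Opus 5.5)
The plan is to reduce the statement to Corollary~\ref{cor:irreducible}. The key observation is that all $n$-simplices in $\R^n$ form a single orbit under the affine group. Let $\Delta$ be a regular simplex with centroid at the origin, and let $T$ be any simplex with $|T|=|\Delta|$. Choose the unique affine map sending the vertices of $\Delta$ to those of $T$; it has the form $x\mapsto Ax+b$ with $A\in\GL(n)$. Since $|\det A|=|T|/|\Delta|=1$, and since $\lambda_1$ is translation invariant, we get
\[
\lambda_1(T)=\lambda_1(A\Delta), \qquad |\det A|=1.
\]
So it suffices to show that $\lambda_1(A\Delta)\ge\lambda_1(\Delta)$ for all such $A$, with equality only when $A$ is orthogonal. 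In that case $T$ is a translate of an orthogonal image of $\Delta$, hence regular, which gives the uniqueness.

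Next I verify the hypothesis of Corollary~\ref{cor:irreducible}: the group $G(\Delta)\subset\OO(n)$ of orthogonal symmetries of the centered regular simplex acts irreducibly on $\R^n$. This group is the symmetric group $S_{n+1}$, acting by permuting the $n+1$ vertices $v_0,\dots,v_n$. These vertices satisfy $\sum_i v_i=0$ and span $\R^n$. Consider the linear map $\R^{n+1}\to\R^n$ given by $e_i\mapsto v_i$. It is $S_{n+1}$-equivariant, surjective, and its kernel is the line spanned by $(1,\dots,1)$. Hence $\R^n$ is isomorphic, as a representation, to the standard $n$-dimensional representation $\{x\in\R^{n+1}:\sum_i x_i=0\}$, which is irreducible.

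If one prefers a self-contained argument for irreducibility, take a nonzero invariant subspace $W$ and a nonzero $w\in W$. Since $w\neq 0$ and the $v_i$ span $\R^n$, some two vertices satisfy $\langle w,v_i\rangle\neq\langle w,v_j\rangle$. The transposition $(ij)$ acts as the reflection in the hyperplane orthogonal to $v_i-v_j$, so $w-(ij)w$ is a nonzero multiple of $v_i-v_j$ and lies in $W$. Applying all permutations to $v_i-v_j$ produces every difference $v_k-v_l$, and these span the hyperplane $\{\sum_i x_i=0\}\cong\R^n$. Therefore $W=\R^n$.

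With irreducibility in hand, Corollary~\ref{cor:irreducible} gives both the inequality $\lambda_1(A\Delta)\ge\lambda_1(\Delta)$ and the rigidity statement that equality forces $A$ to be orthogonal. I do not expect a real obstacle here. The only points that need care are the reduction of the affine orbit to the volume-preserving linear orbit, which uses translation invariance and $|\det A|=1$, and the irreducibility check above. The case $\det A=-1$ is covered because the corollary is stated for all $A$ with $|\det A|=1$; alternatively, compose with a reflection symmetry of $\Delta$ to reduce to $A\in\SL(n)$.
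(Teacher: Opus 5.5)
Your proposal is correct and follows the paper's proof: write every simplex as an affine image of a centered regular simplex, observe that its symmetry group $S_{n+1}$ acts irreducibly on $\R^n$, and invoke Corollary~\ref{cor:irreducible}; you also supply the details the paper leaves implicit (translation invariance, $|\det A|=1$, and irreducibility of the standard representation). One small point concerns your optional self-contained irreducibility argument: ruling out the case where all $\langle w,v_i\rangle$ equal a common nonzero constant requires $\sum_i v_i=0$, not only that the $v_i$ span $\R^n$.
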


\begin{proof}
Every $n$-simplex is an affine image of a regular simplex.  The symmetry group of the regular simplex acts irreducibly on $\R^n$.  The result follows from Corollary \ref{cor:irreducible}.
\end{proof}

\begin{remark}
	Corollary \ref{cor:simplex} in dimension $2$ recovers the classical P\'olya--Szeg\H{o} theorem for triangles, originally proved with the Steiner symmetrization. In fact, the statement in general dimension can be similarly proved using a carefully chosen sequence of Steiner symmetrizations perpendicular to the edges, as was done by  Hadwiger \cite{Hadwiger} to prove the corresponding isoperimetric inequality for simplices, see also \cite{Boroczky_Kovacs}. We refer also to \cite{Rolling} for another Brownian-motion-based proof, which is nevertheless rooted in Steiner symmetrization.

\end{remark}

Similarly one shows 
\begin{corollary}\label{cor:octahedra}
Among all centrally symmetric polytopes in $\R^n$ of fixed volume with $2n$ vertices, the regular cross-polytope minimizes $\lambda_1$.
\end{corollary}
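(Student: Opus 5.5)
The plan is to reduce to Corollary \ref{cor:irreducible}, in the same way Corollary \ref{cor:simplex} was handled. First I will show that every centrally symmetric polytope in $\R^n$ with exactly $2n$ vertices is a linear image of the regular cross-polytope $C_n=\mathrm{conv}\{\pm e_1,\dots,\pm e_n\}$. Since the polytope $Q$ is centrally symmetric, its center is the origin and its vertex set is invariant under $x\mapsto -x$. No vertex is the origin: the origin is the midpoint of $v$ and $-v$, hence cannot be an extreme point. So the $2n$ vertices split into $n$ antipodal pairs $\pm v_1,\dots,\pm v_n$, and $Q=\mathrm{conv}\{\pm v_i\}$. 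Because $Q$ is a convex body, it has nonempty interior, so the $v_i$ span $\R^n$. Being $n$ vectors, they form a basis. Letting $A$ be the matrix with columns $v_i$, we get $Q=AC_n$ with $A\in\GL(n)$.

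Next I will check that the symmetry group $G(C_n)$ acts irreducibly. It contains the hyperoctahedral group of signed permutation matrices, and this group already acts irreducibly. Suppose $W\neq 0$ is an invariant subspace and $w\in W$ has $w_i\neq 0$. Flipping the sign of the $i$-th coordinate and subtracting gives $2w_ie_i\in W$, so $e_i\in W$. Permutations then give every $e_j\in W$, hence $W=\R^n$.

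Finally, Corollary \ref{cor:irreducible} gives $\lambda_1(AC_n)\ge\lambda_1(C_n)$ for all $A$ with $|\det A|=1$, with equality only for orthogonal $A$. Any admissible $Q$ of fixed volume $|C_n|$ has the form $AC_n$ with $|\det A|=1$, so $\lambda_1(Q)\ge\lambda_1(C_n)$. If the prescribed volume is different, I will rescale by a homothety and use the $2$-homogeneity of $\lambda_1$. Equality holds only when $Q$ is an orthogonal image of the suitably scaled regular cross-polytope, which gives uniqueness up to rotations.

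I expect no real obstacle. The only point needing care is the vertex-pairing argument in the first step, namely that the origin is never a vertex and that the $n$ vectors $v_i$ span $\R^n$.
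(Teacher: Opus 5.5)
Your proposal is correct and takes the same route as the paper. The paper simply notes that every such polytope lies in the linear orbit of the regular cross-polytope, and then appeals to Corollary~\ref{cor:irreducible} as in the simplex case. You supply the details the paper calls ``easy to see'': the antipodal pairing of the vertices, the irreducibility of the signed permutation group, and rescaling via homogeneity. The only step left implicit is first translating the center of symmetry to the origin, which is harmless since $\lambda_1$ is translation invariant.
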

\begin{proof}
	It is easy to see that all such polytopes lie in the linear orbit of the regular cross-polytope.  
\end{proof}
Once again, this result can be obtained using Steiner symmetrizations, see e.g. \cite[Proposition 9.1]{Gruber}.

One can similarly  deduce that regular polygons in  $\R^2$ and platonic solids in $\R^3$ are in Faber--Krahn position.
Nevertheless, all those cases can also be deduced by carefully chosen Steiner symmetrizations.

\begin{remark}
	Steiner symmetrization does not directly apply to chiral bodies, namely $K$ for which $G(K)\subset \SO(n)$. 
	A well-known example of a chiral convex body whose  symmetry group acts irreducibly is the snub cube, an Archimedean solid with 38 faces consisting of 6 squares and 32 equilateral triangles \cite[Section 21]{Conway_symmetries}.
	Other examples include planar convex bodies with cyclic symmetry groups of order at least $3$, the snub dodecahedron in $\R^3$, and the snub $24$-cell in $\R^4$. Furthermore, chiral polytopes with symmetry group $A_{n+1}$, the even permutations acting irreducibly on $\R^n=\{x\in\R^{n+1}:x_1+\dots+ x_{n+1}=0\}$, exist for any $n\geq 3$ \cite[Theorem C]{Friese_Ladisch}. 
	
	Thus all such bodies are in Faber-Krahn position by Corollary \ref{cor:irreducible}.	
	While in some cases a symmetrization procedure tailored to the symmetry group could potentially replace the Steiner symmetrization to arrive at this conclusion, it is not always clear if or how this can be done.
\end{remark}

\subsection{Inradius and the $p$-Laplacian}
\label{sec:inradius}
The log-convexity phenomenon of Theorem \ref{thm:main-logconvex-intro} is not restricted to $\lambda_1$. Recall that $r(K)$ denotes the inradius of $K$.
\begin{proposition}\label{prop:inradius-logconvex}
Let $K\subset\R^n$ be a convex body and let $P>0$.  Then
\[
        t\longmapsto r(P^tK)^{-1}
\]
is log-convex. Moreover, $\SL(n)\ni A\mapsto r(AK)^{-1}$ has a unique minimum up to orthogonal transformation.
\end{proposition}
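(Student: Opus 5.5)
The plan is to express the inradius of a linear image as an inscription problem for ellipsoids in the fixed body $K$, and then use the convexity of $K$ together with a support-function inequality. For $A\in\GL(n)$ we have $x+\rho B^n\subset AK$ if and only if $A^{-1}x+\rho A^{-1}B^n\subset K$. Hence
\[
r(AK)=\max\{\rho\ge 0:\ \exists y,\ y+\rho A^{-1}B^n\subset K\}.
\]
Log-convexity of $r(P^tK)^{-1}$ is the same as log-concavity of $t\mapsto r(P^tK)$. So fix $t_0,t_1$ and $s\in[0,1]$, write $t_s=(1-s)t_0+st_1$, and put $r_i=r(P^{t_i}K)$. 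Choose centers $y_i$ with $y_i+r_iP^{-t_i}B^n\subset K$.

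By convexity of $K$, the Minkowski combination
\[
(1-s)y_0+sy_1+(1-s)r_0P^{-t_0}B^n+s\,r_1P^{-t_1}B^n
\]
lies in $K$. It then suffices to show that this set contains $(1-s)y_0+sy_1+r_0^{1-s}r_1^{s}P^{-t_s}B^n$. Comparing support functions, and using $h_{MB^n}(u)=|M^Tu|=|Mu|$ for symmetric $M$, we need for every $u$
\[
(1-s)r_0|P^{-t_0}u|+s\,r_1|P^{-t_1}u|\ \ge\ r_0^{1-s}r_1^s\,|P^{-t_s}u|.
\]
The weighted AM--GM inequality bounds the left side from below by $(r_0|P^{-t_0}u|)^{1-s}(r_1|P^{-t_1}u|)^s$. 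It remains to check that $t\mapsto |P^{-t}u|$ is log-convex. Diagonalizing $P=\sum_i\mu_i e_i\otimes e_i$, we get $|P^{-t}u|^2=\sum_i \mu_i^{-2t}u_i^2$, a positive combination of exponentials in $t$. Such a sum is log-convex by H\"older's inequality. This gives $r(P^{t_s}K)\ge r_0^{1-s}r_1^s$, which is the first claim.

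For the second claim, log-convexity by itself only shows that the set of minimizers is geodesically convex. Uniqueness will instead come from John's theorem, whose existence and uniqueness (up to orthogonal maps) is recalled in the introduction. By the displayed formula, for $A\in\SL(n)$ the quantity $r(AK)^n|B^n|$ is the largest volume of a translate of $\rho A^{-1}B^n$ contained in $K$. As $A$ ranges over $\SL(n)$, the ellipsoids $A^{-1}B^n$ range over all centered ellipsoids of volume $|B^n|$. Hence
\[
\sup_{A\in\SL(n)} r(AK)^n|B^n|
\]
equals the volume of the maximal volume ellipsoid $\mathcal E_J$ inscribed in $K$, and this supremum is attained. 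Moreover, $A$ is a maximizer if and only if some translate of $r(AK)A^{-1}B^n$ is the unique John ellipsoid $\mathcal E_J$, i.e. if and only if $A\mathcal E_J$ is a Euclidean ball. If $A_0,A_1$ are two such maps, then $A_1A_0^{-1}$ maps a ball centered at $0$ to a ball of the same volume, since both maps lie in $\SL(n)$. Therefore $A_1A_0^{-1}\in\OO(n)$.

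The main point requiring care is this last identification. One must check that maximizing $r$ over the $\SL(n)$-orbit is the same as maximizing volume over all inscribed ellipsoids, and that uniqueness of the John ellipsoid transfers to uniqueness of the linear map modulo $\OO(n)$. Once the reformulation $r(AK)=\max\{\rho: y+\rho A^{-1}B^n\subset K\}$ is in place, the log-convexity part is a short support-function computation.
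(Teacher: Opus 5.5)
Your proposal is correct and follows the paper's proof. You pull the two inscribed balls back to $K$, form their Minkowski combination, and show it contains $r_0^{1-s}r_1^{s}P^{-t_s}B$; uniqueness then follows from the uniqueness of the John ellipsoid, as in the paper. The one cosmetic difference is how the containment is checked: you use support functions and log-convexity of $t\mapsto|P^{-t}u|$, while the paper uses the matrix AM--GM inequality $r_0^{1-s}r_1^sP^{-s}\le (1-s)r_0I+sr_1P^{-1}$ for commuting positive matrices together with $(M+N)B\subset MB+NB$.
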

The uniqueness of the inradius maximizing position follows directly from John's theorem. Indeed, if $r(AK)$ is maximized by $A\in \SL(n)$ then $AK$ must be in John position. For if $TK$ is in John's position, then the inradius of the maximal volume ellipsoid of $TK$, which is a Euclidean ball, cannot exceed $r(AK)$ by maximality of the latter and so $r(TK)=r(AK)$, and by the uniqueness of the John position, $T=UA$ with $U$ an orthogonal transformation.

More generally, one may consider a $p$-Laplacian analogue.  Let
\[
\lambda_{1,p}(K)=\inf_{v\in W^{1,p}_0(K)\setminus\{0\}}
\frac{\int_K |\nabla v|^p\,dx}{\int_K |v|^p\,dx}.
\]

As $p\to\infty$, one has $\lambda_{1,p}(K)^{1/p}\to r(K)^{-1}$ \cite{infinity_laplacian}. In light of Theorem \ref{thm:main-logconvex-intro} and Proposition \ref{prop:inradius-logconvex}, it is natural to ask if log-convexity holds in general.

\begin{question}\label{qu:lambdap_convex}
	Is the functional $t\mapsto \lambda_{1,p}(P^tK)$ log-convex for all $1<p<\infty$, $p\neq 2$? 
\end{question}

It is not hard to show, as in Proposition \ref{prop:minimum_existence}, that for any $1<p<\infty$ the minimum of $\lambda_{1,p}(AK)$ over all $A\in\SL(n)$ is attained. The uniqueness of such a position is much less clear. 

\begin{question}
	Is the minimum of $\lambda_{1,p}(AK)$, $A\in \SL(n)$ unique up to orthogonal transformation?
\end{question}

Note that log-convexity alone would not suffice, as $\lambda_{1,p}(P^tK)$ is not automatically analytic in $t$ and so strict log-convexity would not be immediate.

Such a conjectural $p$-Faber--Krahn position would interpolate between the spectral Faber--Krahn position and the John position.

Recall the Cheeger constant
\[
h(K)=\inf_{E\subset K}\frac{\Per(E)}{|E|}.
\]
Letting $p\to1$, $\lambda_{1,p}(K)\to h(K)$ \cite{one_laplacian}. We show log-convexity of the Cheeger constant in dimension $2$.

\begin{proposition}\label{prop:cheeger-logconvex}
Let $K\subset\R^2$ be a convex body and let $P>0$.  Then
\[
        t\longmapsto h(P^tK)
\]
is log-convex.
\end{proposition}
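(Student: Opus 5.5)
The plan is to reduce the statement to midpoint log-convexity and then argue through inner parallel bodies, using the Kawohl--Lachand-Robert description of planar Cheeger sets. First, some reductions. The map $t\mapsto h(P^tK)$ is continuous, since $h$ is continuous in the Hausdorff metric on convex bodies. So it suffices to prove midpoint log-convexity,
\[
h(P^{(t_0+t_1)/2}K)^2\le h(P^{t_0}K)\,h(P^{t_1}K).
\]
Replacing $K$ by $P^{(t_0+t_1)/2}K$ and $P$ by $P^{(t_1-t_0)/2}$, we must show $h(K)^2\le h(P^{-1}K)\,h(PK)$. Since $h(cK)=c^{-1}h(K)$, rescaling $P$ by a positive scalar multiplies both sides by the same factor, so we may also assume $\det P=1$.

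Next, the characterization. For a convex $K\subset\R^2$, let $K_{-r}=\{x\in K:\ x+rB\subset K\}$ be the inner parallel body. By Kawohl--Lachand-Robert \cite{KawohlLachandRobert}, $h(K)=1/r_K$, where $r_K$ is the unique $r$ with $|K_{-r}|=\pi r^2$. The function $r\mapsto |K_{-r}|-\pi r^2$ is strictly decreasing: it is positive for small $r$ and negative beyond the inradius. Hence $h(K)\le 1/r$ holds whenever $|K_{-r}|\ge \pi r^2$. Set $r_0=1/h(P^{-1}K)$ and $r_1=1/h(PK)$, with $A_0=(P^{-1}K)_{-r_0}$ and $A_1=(PK)_{-r_1}$. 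Then $|A_i|=\pi r_i^2>0$. With $r=\sqrt{r_0r_1}$, the goal becomes
\[
|K_{-r}|\ge \pi r_0r_1 .
\]

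The key inclusion is
\[
\tfrac12(PA_0+P^{-1}A_1)\subset K_{-r}.
\]
To see this, take $x\in A_0$ and $y\in A_1$. Then $P(x+r_0B)\subset K$ and $P^{-1}(y+r_1B)\subset K$, so by convexity
\[
\tfrac12(Px+P^{-1}y)+\tfrac12(r_0PB+r_1P^{-1}B)\subset K.
\]
The ellipse sum $\tfrac12(r_0PB+r_1P^{-1}B)$ has support function
\[
u\mapsto \tfrac12\bigl(r_0|Pu|+r_1|P^{-1}u|\bigr)\ge \sqrt{r_0r_1}\,\sqrt{|Pu|\,|P^{-1}u|}\ge \sqrt{r_0r_1}\,\langle Pu,P^{-1}u\rangle^{1/2}=r
\]
for unit $u$, using AM--GM and Cauchy--Schwarz. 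Hence this ellipse sum contains $rB$, and the inclusion follows.

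Finally, Brunn--Minkowski in the plane, in its multiplicative form, together with $\det P=1$ gives
\[
|K_{-r}|\ge \bigl|\tfrac12(PA_0+P^{-1}A_1)\bigr|\ge |PA_0|^{1/2}|P^{-1}A_1|^{1/2}=\pi r_0r_1=\pi r^2 .
\]
Therefore $h(K)\le 1/r=\sqrt{h(P^{-1}K)h(PK)}$, which is the midpoint inequality.

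The main point needing care is the characterization step. One must confirm that the Kawohl--Lachand-Robert result applies to all convex bodies, not just smooth ones, and that the strict monotonicity of $|K_{-r}|-\pi r^2$ holds. This monotonicity follows because $|K_{-r}|$ is nonincreasing in $r$ while $\pi r^2$ is strictly increasing. Everything else is the elementary ellipse-support estimate and Brunn--Minkowski.
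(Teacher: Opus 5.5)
Your proof is correct and follows essentially the same route as the paper. Both use the Kawohl--Lachand-Robert characterization $|K\ominus rB|=\pi r^2$ at $r=1/h(K)$, the AM--GM containment of a ball in the Minkowski average of the two ellipses, the inclusion of the Minkowski average of the inner parallel bodies into the inner parallel body, and planar Brunn--Minkowski. The differences are cosmetic: you prove the midpoint inequality in the symmetric normalization $(P^{-1}K,K,PK)$, use the multiplicative form of Brunn--Minkowski, and then invoke continuity of $h$. The paper instead runs the same argument with a general weight $s\in[0,1]$, which makes the continuity step unnecessary.
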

This renders the following limiting case of Question \ref{qu:lambdap_convex} particularly plausible.
\begin{conjecture}
	The function $t\mapsto h(P^tK)$ is log-convex in arbitrary dimension.
\end{conjecture}

The proofs of Propositions \ref{prop:inradius-logconvex} and \ref{prop:cheeger-logconvex} are given in Section \ref{subsec:inradius-cheeger-proofs}.

\subsection{Heat trace}

Let
\[
        Z_K(\tau)=\operatorname{Tr}(e^{\tau\Delta_K/2})=\sum_{j\ge1}e^{-\lambda_j(K)\tau/2}=\int_K p^{K, D}_\tau(x, x)dx
\]
be the Dirichlet heat trace of $\Delta/2$, where $p^{K, D}_\tau(x, y)$ is the Dirichlet heat kernel, that is the Schwartz kernel of $e^{\tau\Delta_K/2}$. Luttinger \cite{Luttinger1} gives heat trace versions of Faber--Krahn-type theorems using symmetrization results.  For instance, the classical Steiner symmetrization proof of the P\'olya--Szeg\H{o} theorem for triangles may be strengthened to
\[
        Z_T(\tau)\le Z_{T_{\rm eq}}(\tau),\qquad \tau>0,
\]
for every triangle $T$ with the same area as the equilateral triangle $T_{\rm eq}$.

The B-theorem implies an affine-orbit statement in the centrally symmetric case.

\begin{theorem}\label{thm:heattrace}
Let $K\subset\R^n$ be centrally symmetric and convex, and let $P>0$.  For every $\tau>0$, the function
\[
        t\longmapsto Z_{P^tK}(\tau)
\]
is log-concave.  Consequently, if the symmetry group of $K$ acts irreducibly, then
\[
        Z_{AK}(\tau)\le Z_K(\tau)
\]
for all $A\in \SL(n)$ and all $\tau>0$.
\end{theorem}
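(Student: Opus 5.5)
We express the heat trace through Brownian bridges and then apply the B-theorem to finite samples of the bridge. Write
\[
Z_K(\tau)=\int_K p^{K,D}_\tau(x,x)\,dx=\int_{\R^n} p_\tau(x,x)\,\mathbb P^{\tau}_{x\to x}(X_s\in K\ \forall s\le\tau)\,dx,
\]
where $p_\tau(x,x)=(2\pi\tau)^{-n/2}$ and $\mathbb P^\tau_{x\to x}$ is the law of the Brownian bridge from $x$ to $x$ of duration $\tau$. Decompose the loop as $X_s=x+B_s$, where $B$ is a Brownian bridge from $0$ to $0$. Then $Z_K(\tau)=(2\pi\tau)^{-n/2}\,\E\,|\{x: x+B_s\in K\ \forall s\le \tau\}|$. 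It is more convenient to keep everything Gaussian. We sample the bridge at times $s_j=j\tau/m$ for $j=1,\dots,m$, with the endpoint $s_m=\tau$ identified with $s_0=0$. The vector $(X_{s_1},\dots,X_{s_m})$ then has density proportional to
\[
\exp\Big(-\tfrac{m}{2\tau}\sum_{j=1}^{m}|x_j-x_{j-1}|^2\Big),\qquad x_0=x_m,
\]
on $(\R^n)^m$. This is a degenerate Gaussian: it is invariant under the diagonal translation $x_j\mapsto x_j+x$, and that translation is exactly the integration over $x$. Hence
\[
Z^{(m)}_K(\tau):=c_{m,\tau}\int_{(\R^n)^m}\1_{K^m}(x_1,\dots,x_m)\,e^{-\frac{m}{2\tau}\sum|x_j-x_{j-1}|^2}\,dx
\]
is the cyclically sampled heat trace, and $Z^{(m)}_K(\tau)\downarrow Z_K(\tau)$ as $m\to\infty$ along dyadic $m$. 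This follows from the continuity of paths and because $\partial K$ is not hit on a set of positive measure, by the same discretization used for Kac's formula.

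Now substitute $x_j=P^t y_j$, so that $\1_{(P^tK)^m}(x)=\1_{K^m}(y)$, with Jacobian $\det P^{tm}$. We obtain
\[
Z^{(m)}_{P^tK}(\tau)=c\,(\det P)^{tm}\int_{K^m} e^{-\frac{m}{2\tau}\sum|P^t(y_j-y_{j-1})|^2}\,dy .
\]
The quadratic form is degenerate along the diagonal, so we remove the diagonal degeneracy. Note that $K^m$ is centrally symmetric in $(\R^n)^m$, but not invariant under diagonal translation. We therefore add a small term $\eps\sum_j|P^t y_j|^2$ to the exponent. The resulting exponent is $-\frac12\langle (L\otimes P^{2t}) y, y\rangle$ with $L=\frac m\tau\Delta_{\rm cyc}+\eps I_m$ positive definite. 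The integral is then the Gaussian measure of the symmetric convex body $(I_m\otimes L^{1/2}\otimes P^t)\,\cdots$. More cleanly, change variables $z=(L^{1/2}\otimes P^t)y$. Then
\[
\int_{K^m}e^{-\frac12\langle (L\otimes P^{2t})y,y\rangle}dy=(\det P)^{-tm}\det L^{-n/2}(2\pi)^{nm/2}\,\gamma_{nm}\big(e^{tS}C\big),
\]
where $C=(L^{1/2}\otimes I_n)K^m$ is a centrally symmetric convex body and $S=I_m\otimes\log P$ is a symmetric matrix commuting with $L^{1/2}\otimes I_n$. The factors $(\det P)^{\pm tm}$ cancel. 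By the B-theorem, $t\mapsto\gamma_{nm}(e^{tS}C)$ is log-concave, so the $\eps$-regularized sampled trace is log-concave in $t$. Letting $\eps\to0$ by monotone convergence, and then $m\to\infty$, preserves log-concavity, since pointwise limits of log-concave functions are log-concave whenever the limit is positive.

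The main obstacle we expect is the careful justification of the cyclic bridge discretization and its monotone convergence to $Z_K(\tau)$, including the regularization of the degenerate diagonal direction. The B-theorem step itself is routine once the exponent has the form $L\otimes P^{2t}$.

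For the consequence, let $A\in\SL(n)$ and write $A=UP$ by polar decomposition, with $P>0$ and $\det P=1$. Then $Z_{AK}=Z_{PK}$. Set $\phi(t)=\log Z_{P^tK}(\tau)$, which is concave. Since $G(K)$ acts irreducibly, we argue as in Corollary \ref{cor:irreducible}. Alternatively, for $g\in G(K)$ we have $Z_{P^tK}=Z_{gP^tg^{-1}K}$, so we may average over the group. Concavity of $\log Z$ along geodesics of $\mathcal P_n$ (the above argument applies to any geodesic $e^{tS}g$), combined with $G(K)$-invariance of the function $\OO(n)h\mapsto Z_{hK}$ on $\mathcal P_n$, lets us use the Cartan barycenter of the orbit $\{gP^2g^{-1}\}$. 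By Schur's lemma this barycenter is scalar, hence equal to $I$. Jensen's inequality for geodesically concave functions then gives $\log Z_K\ge \log Z_{PK}$. Concretely, it suffices to use the derivative: $\phi'(0)$ is a linear functional $\tr(\log P\cdot M)$, where $M$ is a symmetric $G(K)$-invariant matrix and hence scalar. Since $\tr\log P=0$, we get $\phi'(0)=0$. Concavity then yields $\phi(1)\le\phi(0)$.
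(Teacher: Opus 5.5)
Your proposal is correct and follows the same route as the paper. For the log-concavity you use cyclic discretization of the Brownian loops, regularization of the degenerate diagonal direction by adding $\eps\sum_j|x_j|^2$, and the B-theorem applied to the centrally symmetric body $K^m$ with $S=I_m\otimes\log P$ commuting with the regularized cyclic covariance, followed by the limits $\eps\downarrow0$ and $m\to\infty$; your explicit change of variables $z=(L^{1/2}\otimes P^t)y$ is Lemma~\ref{lem:b_conj} written out. For the consequence, you (like the paper) get vanishing of the first variation in trace-free directions from Schur's lemma and then use concavity along $t\mapsto P^tK$; phrasing this as $G(K)$-invariance of the derivative matrix $M$, rather than averaging $S$ over $G(K)$, is equivalent.
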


\subsection{The Ornstein--Uhlenbeck operator}

Let $d\gamma_n=(2\pi)^{-n/2}e^{-|x|^2/2}dx$.  For a domain $K$ define the first Dirichlet Ornstein--Uhlenbeck eigenvalue by
\[
\lambda^{\OU}_1(K)=
\inf_{u\in H^1_0(K,\gamma_n)\setminus\{0\}}
\frac{\int_K |\nabla u|^2\,d\gamma_n}{\int_K u^2\,d\gamma_n}.
\]
Equivalently, $\lambda^{\OU}_1(K)$ is the first Dirichlet eigenvalue of
\[
-L_\gamma=-\Delta+x\cdot\nabla
\]
in $L^2(K,\gamma_n)$.

\begin{proposition}\label{prop:OU_convex}
	Let $K\subset\R^n$ be a centrally symmetric convex body and let $P>0$. Then
	\[
	t\longmapsto \lambda^{\OU}_1(P^tK)
	\]
	is convex.
\end{proposition}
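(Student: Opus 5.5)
We will prove convexity of $t\mapsto\lambda^{\OU}_1(P^tK)$ by the same Brownian route the paper uses for $\lambda_1$. The stochastic process is now the Ornstein--Uhlenbeck process $Y_s$ with generator $\frac12 L_\gamma$, started from stationarity. We use the Kac-type formula
\[
\tfrac12\lambda^{\OU}_1(K)=-\lim_{T\to\infty}\frac1T\log \mathbb P_{\gamma_n}(Y_s\in K\ \ \forall s\le T).
\]
This holds because the Dirichlet OU operator on a bounded convex $K$ has discrete spectrum, a simple ground state, and a spectral gap. It does not matter whether we start from $x\in\intt K$ or from $\gamma_n$, since the limit is the same in both cases.

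The key structural point is that the stationary OU process is rotation invariant in $\R^n$. So the sample vector $Y^m=(Y_{T/m},\dots,Y_T)\in(\R^n)^m$ is a centered Gaussian with covariance $\Sigma_m\otimes I_n$. Here $\Sigma_m$ is an $m\times m$ time-covariance matrix, namely $(\Sigma_m)_{ij}=e^{-|i-j|T/(2m)}$ for generator $\frac12L_\gamma$. Thus, exactly as for Brownian motion, $\mathbb P(Y^m\in (P^tK)^m)=\mu_m\big((I_m\otimes P^t)K^m\big)$ for a centered Gaussian $\mu_m$ whose covariance has the form $\Sigma_m\otimes I_n$.

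Next we apply the B-theorem. Write $\mu_m$ as the image of the standard Gaussian $\gamma_{nm}$ under $\Sigma_m^{1/2}\otimes I_n$. Because $\Sigma_m^{1/2}\otimes I_n$ commutes with $I_m\otimes P^t=\exp\big(t\,I_m\otimes \log P\big)$, we get
\[
\mu_m\big((I_m\otimes P^t)K^m\big)=\gamma_{nm}\big(e^{tS}C_m\big),
\]
where $S=I_m\otimes\log P$ is symmetric and $C_m=(\Sigma_m^{-1/2}\otimes I_n)K^m$. The set $C_m$ is a centrally symmetric convex body, because $K$ is. The B-theorem then shows that $t\mapsto\log\mathbb P(Y^m\in(P^tK)^m)$ is concave for each fixed $m$ and $T$.

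To pass to the limit, we let $m\to\infty$ along dyadic refinements. The events $\{Y^m\in K^m\}$ decrease to $\{Y_s\in K\ \forall s\le T\}$, using continuity of paths together with $\mathbb P(Y_s\in\partial K\text{ for some } s)$-type boundary issues. These boundary issues are handled as in the Brownian case: replace $K$ by slightly shrunken bodies $(1-\eps)K$ and use monotonicity. So $\log$ of the continuous-time survival probability is concave in $t$. Dividing by $T$ preserves concavity, and pointwise limits of concave functions are concave. Hence $-\frac12\lambda^{\OU}_1(P^tK)$ is concave, i.e. $\lambda^{\OU}_1(P^tK)$ is convex.

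Note that the result is convexity, not log-convexity. No homogeneity rescaling is available for $L_\gamma$, which is why we get exactly what the Kac limit gives.

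The main obstacle is justifying the OU Kac formula uniformly enough to pass the limits. Concretely, we need $\frac1T\log\mathbb P_{\gamma_n}(\tau_K>T)\to-\frac12\lambda^{\OU}_1(K)$ for every body $P^tK$. For this we expand the Dirichlet semigroup $e^{sL_\gamma/2}$ on $L^2(K,\gamma_n)$ in its eigenbasis. The survival probability equals $\langle e^{TL_\gamma/2}\1_K,\1_K\rangle_{\gamma_n}$, which is bounded above by $e^{-\lambda^{\OU}_1T/2}\gamma_n(K)$. It is bounded below by $e^{-\lambda^{\OU}_1 T/2}\langle\1_K,\phi_1\rangle^2$, using positivity of the ground state $\phi_1$ and of all the other terms' total. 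Here $\langle \1_K,\phi_1\rangle>0$. Because only pointwise convergence in $t$ is needed, no uniformity in $t$ is required. The discretization step is the more delicate one; it uses the standard fact that for open convex sets the sampled events converge to the continuous one almost surely.
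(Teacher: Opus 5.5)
Your proposal is correct and follows the paper's proof. You sample the Ornstein--Uhlenbeck process, observe that the sampled law is a centered Gaussian with tensor-product covariance commuting with $I_m\otimes P^t$, apply the B-theorem to the centrally symmetric body $(\Sigma_m^{-1/2}\otimes I_n)K^m$, and pass to the limits $m\to\infty$ and $T\to\infty$ via the Kac formula, getting only convexity for lack of homogeneity. The one cosmetic difference is that you start the process from the stationary measure $\gamma_n$ rather than from $0$, which makes the Kac formula an immediate consequence of the spectral expansion of $\langle e^{TL_\gamma/2}\1_K,\1_K\rangle_{\gamma_n}$.
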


\section{Log-convexity - proofs}\label{sec:proof_convexity}

\subsection{Preliminaries}
\subsubsection{Brownian discretization}

We use Brownian motion with generator $\Delta/2$.  Thus the transition density is
\begin{equation}\label{eq:transition-density}
        p_t(x,y)=(2\pi t)^{-n/2}\exp\left(-\frac{|x-y|^2}{2t}\right).
\end{equation}
With this convention, the Dirichlet semigroup is $e^{t\Delta_K/2}$ and Kac's formula is \eqref{eq:kac-intro}.

Let $K\subset\R^n$ be a convex body with $0\in\intt K$.  For $T>0$ and $m\in\N$, put
\[
        h=\frac{T}{m},
        \qquad
        X^m=(X_h,X_{2h},\ldots,X_{mh})\in(\R^n)^m=\R^n\otimes\R^m.
\]
The law of $X^m$ is a centered Gaussian measure with covariance matrix
\[
        \Sigma_m=I_n\otimes \widetilde\Sigma_m,
        \qquad
        (\widetilde\Sigma_m)_{ij}=h\min(i,j).
\]
The inverse of $\widetilde\Sigma_m$ is
\begin{equation}\label{eq:inverse-covariance}
        \widetilde\Sigma_m^{-1}
        =\frac1{h}
        \begin{pmatrix}
        2 & -1 & 0 & \cdots & 0\\
        -1& 2 & -1& \cdots&0\\
        0&-1&2&\cdots&0\\
        \vdots&&&\ddots&-1\\
        0&0&0&-1&1
        \end{pmatrix}.
\end{equation}
In particular, if $R=(R_1,\ldots,R_m)\in(\R^n)^m$, then
\begin{equation}\label{eq:CM-norm}
        R^T\Sigma_m^{-1}R
        =\frac1{h}\left(|R_1|^2+
        \sum_{j=1}^{m-1}|R_{j+1}-R_j|^2\right).
\end{equation}

\subsubsection{From convexity to log-convexity}

We will use the following standard trick to deduce log-convexity from convexity by utilizing the homogeneity of $\lambda_1$.

\begin{lemma}\label{lem:convex_logconvex}
	Let $f:\R\to (0, \infty)$ have the property that $c^tf(t)$ is convex for any $c>0$. Then $f$ is log-convex.
\end{lemma}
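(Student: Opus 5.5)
The plan is to show that $\log f$ is midpoint convex with a quantitative gap and then let the parameter $c$ absorb that gap. Since $f$ is positive and each $c^tf(t)$ is convex, hence continuous, $f$ is continuous. For a continuous function, log-convexity is the same as midpoint log-convexity:
\[
f\Big(\tfrac{t_0+t_1}{2}\Big)^2\le f(t_0)f(t_1)\qquad\text{for all } t_0,t_1\in\R .
\]
So it suffices to prove this midpoint inequality.

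Fix $t_0<t_1$ and write $t_{1/2}=(t_0+t_1)/2$. Convexity of $g_c(t)=c^tf(t)$ at the midpoint gives
\[
c^{t_{1/2}}f(t_{1/2})\le \tfrac12\big(c^{t_0}f(t_0)+c^{t_1}f(t_1)\big).
\]
Divide both sides by $c^{t_{1/2}}$ and put $d=c^{(t_1-t_0)/2}$, which ranges over all of $(0,\infty)$ as $c$ does. This yields
\[
f(t_{1/2})\le \tfrac12\big(d^{-1}f(t_0)+d\,f(t_1)\big)\qquad\text{for all } d>0 .
\]
The right-hand side is minimized at $d=\sqrt{f(t_0)/f(t_1)}$, where it equals $\sqrt{f(t_0)f(t_1)}$; this is the AM--GM optimization. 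That gives exactly the midpoint inequality.

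To get the full weighted statement, one can either appeal to the continuity argument above or repeat the computation directly. In the direct version, take a general $s\in[0,1]$ and apply convexity at $(1-s)t_0+st_1$ in the same way. This gives
\[
f\big((1-s)t_0+st_1\big)\le (1-s)\,d^{-s}f(t_0)+s\,d^{1-s}f(t_1)\qquad\text{for all } d>0 .
\]
Minimizing over $d$ by weighted AM--GM gives $f(t_0)^{1-s}f(t_1)^s$, attained at $d=f(t_0)/f(t_1)$. This direct route avoids continuity entirely. The only point needing care is the minimization over $d$, and it is elementary.
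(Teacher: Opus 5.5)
Your proof is correct, and it takes a different route from the paper.

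The paper first treats $f\in C^2$. There, convexity of $e^{bt}f(t)$ gives $f''+2bf'+b^2f\ge 0$ for every $b\in\R$. This is a quadratic in $b$ with positive leading coefficient $f$, so its discriminant must be nonpositive, which is $ff''\ge (f')^2$, i.e.\ $(\log f)''\ge 0$. For general $f$ the paper mollifies. The identity $e^{bt}(f\ast\rho_\epsilon)=(e_bf)\ast(e_b\rho_\epsilon)$ shows that the hypothesis survives convolution with a nonnegative kernel, and the paper then passes to the pointwise limit $\epsilon\to0$.

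Your argument is the finite-difference counterpart of that discriminant computation. Instead of optimizing a quadratic in $b$ at the infinitesimal level, you use the convexity inequality only at the three points $t_0,t_s,t_1$. You then choose $c$ so that $d=c^{t_1-t_0}=f(t_0)/f(t_1)$. Substituting this value directly collapses the right-hand side to $(1-s)f(t_0)^{1-s}f(t_1)^s+s\,f(t_0)^{1-s}f(t_1)^s$, so no separate minimization or AM--GM step is needed.

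What your route buys is a shorter, purely pointwise proof with no smoothness assumption and no approximation step. It produces the weighted inequality $f((1-s)t_0+st_1)\le f(t_0)^{1-s}f(t_1)^s$ exactly in the form used in Theorem~\ref{thm:main-logconvex-intro}. Your preliminary midpoint-plus-continuity detour is therefore unnecessary, as you note yourself. The paper's route makes the second-order mechanism explicit, but pays for it with the mollification argument needed to remove the $C^2$ assumption.
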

\begin{proof}
	First assume $f\in C^2(\R)$.  It holds for any $b\in \R$ that $g(t)=e^{bt}f(t)$ satisfies $g''(t)\geq 0$. Thus
	\[
	f''+2bf'+b^2f\geq0
	\]
	for all $b\in \R$, and so $ff''\geq (f')^2$.  Thus $f$ is log-convex.
	
	In the general case, choose a non-negative cut-off function $\rho_\epsilon(t)\in C^\infty(\R)$. Then $f_\epsilon:=f\ast \rho_\epsilon$ converges pointwise to $f$ as $\epsilon\to 0$, and $f_\epsilon\in C^\infty(\R)$.
	
	Moreover, denoting $e_b(t)=e^{bt}$, it holds for all $b\in \R$ that
	
	\[e^{bt} f_\epsilon(t)=e^{bt}\int_{\R} f(t-s) \rho_\epsilon(s)ds=\int_\R e^{b(t-s)}f(t-s) e^{bs}\rho_\epsilon(s)ds = (e_bf)\ast (e_b\rho_\epsilon) (t). \]
	Thus $e^{bt} f_\epsilon(t)$ is convex for all $b\in\R$, and by the above $f_\epsilon$ is log-convex. It follows that $f=\lim f_\epsilon$ is log-convex.
	
\end{proof}

\subsection{The centrally symmetric case}\label{subsec:symmetric-logconvex}\label{sec:symmetric_log_convex}

We first prove Theorem \ref{thm:main-logconvex-intro} when $K$ is centrally symmetric. While a straightforward consequence of the B-theorem, it would fix a framework on top of which we will build in the general case. 

The following simple statement puts the strong B-theorem proved in \cite{BrascampLieb1975,CEFM} in invariant form.

\begin{lemma}\label{lem:b_conj}
	Let $\gamma$ be a centered Gaussian measure on $\R^d$ with covariance matrix $\Sigma$.  Assume that $S$ is a symmetric matrix and $S\Sigma^{-1/2}=\Sigma^{-1/2}S$. Then for all centrally symmetric convex bodies $A\subset\R^d$, 
	
	\[ \gamma(e^{tS}A)\geq \gamma(A)^{1-t}\gamma(e^SA)^t.\]
	
\end{lemma}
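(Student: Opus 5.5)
The plan is to reduce to the standard Gaussian by a linear change of variables and then apply the B-theorem in its usual form. Let $W=\Sigma^{1/2}$ be the positive square root, so $\gamma=W_*\gamma_d$ and $\gamma(B)=\gamma_d(W^{-1}B)$ for every Borel set $B$. The hypothesis $S\Sigma^{-1/2}=\Sigma^{-1/2}S$ means $S$ commutes with $W^{-1}$, hence with $W$. Consequently $e^{tS}$ commutes with $W^{-1}$ for all $t$, and
\[
\gamma(e^{tS}A)=\gamma_d(W^{-1}e^{tS}A)=\gamma_d(e^{tS}W^{-1}A).
\]
Set $A'=W^{-1}A$. This is again a centrally symmetric convex body, since it is a linear image of one.

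By the B-theorem (Brascamp--Lieb, Cordero-Erausquin--Fradelizi--Maurey), the function $t\mapsto\gamma_d(e^{tS}A')$ is log-concave on $\R$ for any symmetric $S$. Evaluating this log-concavity between the points $0$ and $1$ gives, for $t\in[0,1]$,
\[
\gamma_d(e^{tS}A')\ge\gamma_d(A')^{1-t}\gamma_d(e^{S}A')^{t}.
\]
Rewriting each side through the displayed identity yields exactly $\gamma(e^{tS}A)\ge\gamma(A)^{1-t}\gamma(e^SA)^t$.

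The statement presumably intends $t\in[0,1]$; for $t$ outside $[0,1]$ the inequality would have to be read with the roles of the endpoints swapped. There is no real obstacle here. The only point requiring care is that the commutation hypothesis is exactly what allows $e^{tS}$ to pass through the whitening map $W^{-1}$. Commuting with $\Sigma^{-1/2}$ is equivalent to commuting with $\Sigma$, since each is a polynomial in the other.
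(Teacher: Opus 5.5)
Your proof is correct and is essentially the paper's argument: whiten via $\Sigma^{-1/2}$, use the commutation hypothesis to move $e^{tS}$ past the whitening map, and apply the B-theorem to the centrally symmetric body $\Sigma^{-1/2}A$ in standard Gaussian space. The only difference is cosmetic: the paper starts from the diagonal form of the B-theorem and passes to a general symmetric $S=UDU^{-1}$ using the rotation invariance of $\gamma_d$, a step you absorb into your citation; your remark that the inequality is meant for $t\in[0,1]$ is also apt.
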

\proof
It holds by the B-theorem that for all centrally symmetric convex bodies $A$ in standard Gaussian space $(\R^d, \gamma_d)$ and all diagonal matrices $D$, that
$\gamma_d(e^{tD}A)$ is log-concave. Since any symmetric matrix $S=UDU^{-1}$ with $U$ orthogonal, it follows that
$\gamma_d(e^{tS}A)=\gamma_d(e^{tD}U^{-1}A)$ is log-concave for any such $S$.

Now we have 
\[ \gamma( e^{tS} A)=\gamma_d(\Sigma^{-1/2}e^{tS}A) =\gamma_d(e^{tS}\Sigma^{-1/2} A),\]
which by the above is log-concave.
\endproof

\begin{theorem}\label{thm:symmetric-logconvex}
Let $K\subset\R^n$ be centrally symmetric and convex, and let $P>0$.  Then $t\mapsto \lambda_1(P^tK)$ is log-convex.
\end{theorem}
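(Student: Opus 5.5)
We prove convexity of $t\mapsto c^t\lambda_1(P^tK)$ for every $c>0$ and then invoke Lemma \ref{lem:convex_logconvex}. By the homogeneity $\lambda_1(sK)=s^{-2}\lambda_1(K)$ we have $c^t\lambda_1(P^tK)=\lambda_1(c^{-t/2}P^tK)$. Moreover $c^{-t/2}P^t=\tilde P^t$ with $\tilde P=c^{-1/2}P>0$. So it suffices to show that $g(t)=\lambda_1(P^tK)$ is convex for every positive definite $P$. Write $P=e^{S_0}$ with $S_0$ symmetric, so $P^t=e^{tS_0}$.

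Fix $T>0$ and $m\in\N$, and sample Brownian motion as in the preliminaries. Let $A=K^m\subset(\R^n)^m$, which is a centrally symmetric convex body. Set $S=S_0\otimes I_m$, acting on $\R^n\otimes\R^m$. Then $e^{tS}A=(P^tK)^m$. The covariance is $\Sigma_m=I_n\otimes\widetilde\Sigma_m$, so $\Sigma_m^{-1/2}=I_n\otimes\widetilde\Sigma_m^{-1/2}$, which commutes with $S_0\otimes I_m$. Lemma \ref{lem:b_conj} therefore applies. It gives that
\[
t\mapsto \log \mathbb P_0\bigl(X_{jh}\in P^tK,\ j=1,\dots,m\bigr)
\]
is concave, since the inequality in Lemma \ref{lem:b_conj} also holds between any two parameters after replacing $A$ by $e^{t_0S}A$. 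Concavity is preserved by pointwise limits. The events $\{X_{jh}\in P^tK\ \forall j\}$ decrease to $\{X_s\in P^tK\ \forall s\le T\}$ along dyadic $m$, up to a null set by path continuity and closedness of $K$. Hence $t\mapsto\log\mathbb P_0(\tau_{P^tK}>T)$ is concave, using the closed body; the survival probabilities for $K$ and for its interior agree a.s., since Brownian motion started inside immediately exits once it touches the boundary of a convex body.

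Finally, $0\in\intt K$ by central symmetry, and hence $0\in\intt(P^tK)$. Kac's formula \eqref{eq:kac-intro} gives
\[
\tfrac12\lambda_1(P^tK)=-\lim_{T\to\infty}\tfrac1T\log\mathbb P_0(\tau_{P^tK}>T).
\]
This is a pointwise limit of convex functions of $t$, hence convex. Thus $g$ is convex for every $P>0$, and in particular $c^t\lambda_1(P^tK)$ is convex for each $c>0$. Lemma \ref{lem:convex_logconvex} then yields log-convexity.

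The only delicate step is the passage from the discrete event to the continuous one. It needs monotone convergence along dyadic refinements $m=2^k$ and the fact that hitting $\partial(P^tK)$ without exiting has probability zero. Standard properties of Brownian motion and convexity of the domain handle this.
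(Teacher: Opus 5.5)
Your proposal is correct and follows the paper's proof essentially step for step. Both arguments apply Lemma~\ref{lem:b_conj} to $K^m$ with the generator $S_0\otimes I_m$, which commutes with $\Sigma_m^{-1/2}$, pass to the limit $m\to\infty$ and then $T\to\infty$ via Kac's formula to obtain convexity, and conclude log-convexity from homogeneity ($c^t\lambda_1(P^tK)=\lambda_1((c^{-1/2}P)^tK)$) together with Lemma~\ref{lem:convex_logconvex}. Your dyadic monotone-convergence argument and the boundary-regularity remark simply fill in the limit $m\to\infty$, which the paper states without justification.
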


\begin{proof}

First we prove for all $0<t<1$ that
\[ \lambda_1(P^tK)\leq (1-t)\lambda_1(K)+t\lambda_1(PK).\]

We may make an orthogonal change of coordinates such that $P$ is diagonal, and denote $P=\exp(D)$ where $D$ is a diagonal matrix with real entries.

Let $X_s$ be a standard Brownian motion in $\R^n$ starting at $0$. Denote $K_t=e^{tD}K$. Fix a time $\tau>0$ and an integer $m\geq 1$. Denote $s_j=j\tau/m$, $j=1, \dots, m$.  
Let $\tilde \gamma_m$ be the joint probability distribution of $(X_{s_1}, \dots, X_{s_m})\in \R^{nm}=\R^n\otimes\R^m$, which is Gaussian with covariance matrix $\Sigma_m=I_n\otimes \widetilde\Sigma_m$. Consider the event 
$A_t=\{ X_{s_1}\in K_t, \dots, X_{s_m}\in K_t\}$. Its probability is $\widetilde\gamma_m(A_t)$.  Geometrically $A_t= (K_t)^m= (e^{tD} K)^m= e^{tD_m}K^m$, where $D_m=D\otimes I_m$ is the diagonal matrix with $m$ copies of $D$ on the diagonal. Observe that $D_m$ commutes with $\Sigma_m^{-1/2}$.

We then have by Lemma \ref{lem:b_conj} that
\[\tilde \gamma_m(A_t)\geq \tilde\gamma_m(A_0)^{1-t}\tilde \gamma_m(A_1)^t.\]
Taking logarithms, we find

\[   \log\tilde\gamma_m(A_t)\geq (1-t)\log \tilde\gamma_m(A_0)+t\log \tilde\gamma_m(A_1)\]
that is,

\begin{align*} &\log \mathbb P_0(X_{s_1}, \dots, X_{s_m}\in P^t K)
	\\&\geq (1-t) \log \mathbb P_0(X_{s_1}, \dots, X_{s_m}\in K)+t\log \mathbb P_0(X_{s_1}, \dots, X_{s_m}\in PK).\end{align*}

Passing to the limit as $m\to\infty$ one has $\mathbb P_0(X_{s_1}, \dots, X_{s_m}\in \Omega)\to \mathbb P_0(\tau_\Omega>\tau)$, and we conclude that

\[\log \mathbb P_0 (\tau_{P^tK}>\tau)\geq (1-t)\log \mathbb P_0 (\tau_{K}>\tau)+t\log \mathbb P_0 (\tau_{PK}>\tau).\]

Dividing by $-\tau$ and letting $\tau\to\infty$ we finally conclude by Kac's formula that
\[ \lambda_1(P^tK)\leq (1-t)\lambda_1(K)+t\lambda_1(PK).\]

For arbitrary $t_0<t_1$ and $t=(1-s)t_0+st_1$, taking $K'=P^{t_0}K$, $P'=P^{t_1-t_0}$ shows that $\lambda_1(P^tK)\leq (1-s)\lambda_1(P^{t_0}K)+s\lambda_1(P^{t_1}K)$.

It follows that for every $c>0$,
\[
c^{2t}\lambda_1(P^tK)=\lambda_1((P/c)^tK)
\]
is convex. By Lemma \ref{lem:convex_logconvex}, $\lambda_1(P^tK)$ is log-convex.

\subsubsection{The Ornstein--Uhlenbeck operator}
\begin{proof}[Proof of Proposition \ref{prop:OU_convex}]
	The proof is the same as in Theorem \ref{thm:symmetric-logconvex}, with the Brownian process replaced with an Ornstein--Uhlenbeck process. The sampled joint probability distribution is Gaussian with a different covariance matrix of the same tensor product form $\Sigma_m'=I_n\otimes \widetilde \Sigma'_m$, and one may apply the B-theorem.
\end{proof}
	As the first Ornstein--Uhlenbeck eigenvalue is not homogeneous, convexity does not imply log-convexity.
\subsection{Discrete quasi-stationary distribution}\label{subsec:discrete-quasi}
\subsubsection{Discretely killed Brownian motion}
The non-symmetric proof requires a uniform estimate for the probability distribution of a Brownian motion killed only at discrete times, in the long time limit.
  
Let $K\subset\R^n$ be a convex body, and assume $0\in \operatorname{int}K$.  Let $(X_s)_{s\ge0}$ be Brownian motion in $\R^n$ with generator $\frac12\Delta$, started at $0$.  For $h>0$ put
\[
p_h(x,y)=(2\pi h)^{-n/2}\exp\left(-\frac{|x-y|^2}{2h}\right),
\]
and define the discrete time killed transition operator on $L^2(K)$ by
\[
Q_h f(x)=\int_K p_h(x,y)f(y)\dd y,\qquad x\in K.
\]
Equivalently, $Q_h=\1_K e^{h\Delta/2}\1_K$, where functions are extended by zero outside $K$. Let
\[
\rho_h=\rho_{1,h}>\rho_{2,h}\ge \rho_{3,h}\ge\cdots\ge0
\]
be the eigenvalues of $Q_h$, counted with multiplicity and ordered nonincreasingly.  Let $\varphi_h>0$ be its $L^2(K)$-normalized first eigenfunction:
\[
Q_h\varphi_h=\rho_h\varphi_h,\qquad \int_K\varphi_h^2=1.
\]
Define
\[
z_h:=\int_K x\varphi_h(x)^2\dd x.
\]
For $f$ defined on $K\subset \R^n$, we write $\widetilde f$ for the zero extension of $f$ to $\R^n$. We will write $\|f\|_2$ for the $L^2$ norm of $f$ when no confusion can arise concerning the domain. 

\subsubsection{Uniform discrete quasi-stationary mixing}
The following is the main result of section \ref{subsec:discrete-quasi}.

\begin{theorem}\label{thm:discrete_quasi}
	There are constants $C_K<\infty$, $c_K>0$, and $h_0(K)>0$ such that, for all $0<h<h_0(K)$ and all integers $1\le k\le m$,
	\[
	\left|\E_0\left[X_{kh}\,\middle|\, X_{jh}\in K,\ 1\le j\le m\right]-z_h\right|
	\le C_K\left(e^{-c_Kkh}+e^{-c_K(m-k)h}\right).
	\]
	Moreover, let $P$ be a symmetric positive definite matrix and set $K_t=P^tK$.  On every compact interval $I_0\subset\R$, the constants may be chosen uniformly in $t\in I_0$.
\end{theorem}

The continuous-time analogue of exponential convergence was studied by Champagnat et al in \cite{Champagnat_exponential, Champagnat_criteria, Champagnat_Q}, proving
\[
\left|\E_0[X_s\mid \tau_K>T]-z_K\right|
\le C_K\bigl(e^{-c_Ks}+e^{-c_K(T-s)}\bigr).
\]

\subsubsection{Approximating by the discrete time killed transition operator}

The operator $Q_h$ is compact, self-adjoint on $L^2(K)$ and positive semidefinite.  
Its kernel is strictly positive on the interior of $K\times K$, hence $Q_h$ is positivity improving.  By the Krein--Rutman theorem, $\rho_h$ is a simple eigenvalue, and the corresponding eigenfunction $\varphi_h$ can be chosen to be strictly positive.

Put
\[
\Pi_h f:=\ip{f}{\varphi_h}_{L^2(K)}\varphi_h,
\qquad
R_h:=Q_h-\rho_h\Pi_h.
\]
Then $R_h\varphi_h=0$, $R_h$ is self-adjoint, and for every $N\ge0$,
\begin{equation}\label{eq:RN}
	\norm{R_h^N f}_{2}\le \rho_{2,h}^N\norm{f}_2.
\end{equation}

We will need the following approximation result for the continuous killed Brownian motion by the discretely killed Brownian motion.
\begin{proposition}\label{prop:spectral-approx}
	Let $\mathcal G$ be a compact subset of the symmetric positive definite matrices.  For $G\in\mathcal G$ define
	\[
	p_{h,G}(z)=(2\pi h)^{-n/2}(\det G)^{-1/2}
	\exp\left(-\frac{\langle G^{-1}z,z\rangle}{2h}\right)
	\]
	and
	\[
	\cQ_{h,G}f(x)=\int_K p_{h,G}(x-y)f(y)\dd y.
	\]
	Let $\rho_{j,h}(G)$ be the eigenvalues of $\cQ_{h,G}$ in nonincreasing order and put
	\[
	\mu_{j,h}(G):=\frac{1-\rho_{j,h}(G)}{h}.
	\]
	Let $\alpha_j(G)$ be the $j$-th eigenvalue, counted with multiplicity, of the closed form
	\[
	\cE_G(u)=\frac12\int_K \langle G\nabla u,\nabla u\rangle\dd x,
	\qquad \operatorname{Dom}(\cE_G)=H^1_0(K).
	\]
	Then, for each fixed $j$,
	\[
	\sup_{G\in\mathcal G}|\mu_{j,h}(G)-\alpha_j(G)|\longrightarrow0
	\qquad\text{as }h\downarrow0.
	\]
	Moreover, let $\varphi_{h,G}>0$ be the  $L^2$-normalized first eigenfunction of $\cQ_{h,G}$, and $\psi_{1,G}>0$ the $L^2$-normalized first eigenfunction of $\cE_G$.  If $h_\ell \downarrow0$, then
	\[
	\varphi_{h_\ell ,G}\longrightarrow \psi_{1,G}
	\quad\text{in }L^2(K),
	\]
	uniformly in $G\in\mathcal G$.
	
\end{proposition}

\begin{proof}
	We use the unitary Fourier transform $u(x)\mapsto \widehat u(\xi)$ on $\R^n$.  For $u\in L^2(K)$, let $\widetilde u$ denote its zero extension.  The form associated with $(I-\cQ_{h,G})/h$ is
	\[
	\cE_{h,G}(u):=\frac1h\left(\norm{u}_2^2-\ip{u}{\cQ_{h,G}u}\right)
	=\int_{\R^n} m_{h,G}(\xi)|\widehat{\widetilde u}(\xi)|^2\dd\xi,
	\]
	where
	\[
	m_{h,G}(\xi)=\frac{1-e^{-h\langle G\xi,\xi\rangle/2}}{h},
	\]
	and its eigenvalues are $\mu_{j,h}(G)$.
	
	The min--max principle gives
	\[
	\mu_{j,h}(G)=\inf_{\dim V=j} \max_{\substack{u\in V\\ \norm{u}_2=1}} \cE_{h,G}(u),
	\]
		and similarly 
		\[
	\alpha_{j}(G)=\inf_{\dim V=j} \max_{\substack{u\in V\\ \norm{u}_2=1}} \cE_{G}(u),
	\]
	where in both cases one may consider only $V\subset H^1_0(K)$ as $\cE_{h,G}$ has full domain and $H^1_0(K)\subset L^2(K)$ is dense.
	
	First we prove the upper bound \begin{equation}\label{eq:limsup}
		\limsup_{h\downarrow0}\sup_{G\in\mathcal G}\bigl(\mu_{j,h}(G)-\alpha_j(G)\bigr)\le0.
	\end{equation}
	
	It holds for any $u\in H^1_0(K)$ that $\widetilde u\in H^1(\R^n)$, and so $|\xi|\widehat {\widetilde u}(\xi)\in L^2(\R^n)$. Thus we have
	
	\begin{align*}
		|\cE_{h,G}(u)- \cE_G(u)|&=\left|\int_{\R^n}(m_{h, G}(\xi)-\frac12 \langle G\xi, \xi\rangle)|\widehat {\widetilde u}(\xi)|^2\dd\xi \right|
		\\ &\leq\int_{\R^n}\left|m_{h, G}(\xi)-\frac12 \langle G\xi, \xi\rangle \right| |\widehat {\widetilde u}(\xi)|^2\dd\xi.
	\end{align*}
	
	Fix a finite-dimensional subspace $V\subset H^1_0(K)$, and let $\epsilon>0$. As
	
	\[
	0\le m_{h,G}(\xi)\le \frac12\langle G\xi,\xi\rangle\le C_{\mathcal G}|\xi|^2,
	\]
	
	we may choose $R>0$ such that for all $u\in V$ of unit $L^2$ norm, 
	\[\int_{|\xi|>R}\left|m_{h, G}(\xi)-\frac12 \langle G\xi, \xi\rangle \right| |\widehat {\widetilde u}(\xi)|^2\dd\xi\leq C_{\mathcal G}\int_{|\xi|>R}|\xi|^2 |\widehat {\widetilde u}(\xi)|^2<\epsilon. \]
	This is easily seen by fixing an $L^2$-orthonormal basis $u_i$ of $V\subset H^1_0(K)$, $1\leq i\leq N_V$, and writing $u=\sum_{i=1}^N c_i u_i$ with $\sum |c_i|^2=1$. 
	
	Next it holds that $m_{h,G}(\xi)\to \frac12\langle G\xi,\xi\rangle$ uniformly in  $|\xi|\leq R$, $G\in\mathcal G$. We conclude that \begin{equation}\label{eq:uniform}
		\cE_{h,G}(u)\longrightarrow \cE_G(u)
	\end{equation}
	uniformly in $G\in\mathcal G$ and  $u\in S_{L^2}(V)$, the $L^2$-unit sphere in $V$.
	
	Fix $\varepsilon>0$ and $G_0\in\mathcal G$.  Choose a $j$-dimensional subspace $V_{G_0}\subset H^1_0(K)$ such that
	\[
	\max_{u\in V_{G_0},\ \|u\|_2=1}\cE_{G_0}(u)\le \alpha_j(G_0)+\varepsilon.
	\]
	By the continuity of $G\mapsto \cE_G$ on the fixed finite-dimensional space $V_{G_0}$, as well as by the continuity of  $G\mapsto\alpha_j(G)$, there is a neighborhood $U_{G_0}$ of $G_0$ such that, for $G\in U_{G_0}$,
	\[
	\max_{u\in V_{G_0},\ \|u\|_2=1}\cE_G(u)\le \alpha_j(G)+2\varepsilon.
	\]
	By uniform convergence \eqref{eq:uniform}, it holds for all sufficiently small $h<h_0(V_{G_0})$, and uniformly in $G\in U_{G_0}$, that
	\[
	\max_{u\in V_{G_0},\ \|u\|_2=1}\cE_{h,G}(u)\le \alpha_j(G)+3\varepsilon.
	\]
	A finite subcover of the compact set $\mathcal G$ and the min--max principle therefore give
	\begin{equation*}
		\limsup_{h\downarrow0}\sup_{G\in\mathcal G}\bigl(\mu_{j,h}(G)-\alpha_j(G)\bigr)\le0.
	\end{equation*}
	as claimed. 
	
	\textit{Claim (precompactness).}  Let $h_\ell \downarrow0$, $\mathcal G\ni G_\ell \to G$, and suppose $u_\ell \in L^2(K)$ satisfies
	\[
	\norm{u_\ell }_2=1,
	\qquad
	\cE_{h_\ell ,G_\ell }(u_\ell )\le C.
	\]
	Then $\{u_\ell \}$ is precompact in $L^2(K)$. If $u_\ell \to u$ then $u\in H^1_0(K)$, and 
	\begin{equation}\label{eq:liminf-form}
		\cE_G(u)\le \liminf_{\ell \to\infty}\cE_{h_\ell ,G_\ell }(u_\ell ).
	\end{equation}
	
	\textit{Proof.}
	Since $\mathcal G$ is compact in the positive definite cone, there is $a>0$ such that
	\[
	\langle G_\ell \xi,\xi\rangle\ge a|\xi|^2
	\]
	for all $\ell $ and all $\xi$.  For every fixed $R>0$,
	\[
	|\xi|\ge R
	\quad\Longrightarrow\quad
	m_{h_\ell ,G_\ell }(\xi)\geq \frac{1}{h_\ell }(1-e^{-\frac {h_\ell } 2 aR^2})\to\frac12 aR^2.\]
	Hence for $\ell >\ell_0(R)$ and $|\xi|\geq R$, one has $m_{h_\ell ,G_\ell }(\xi)\geq \frac{aR^2}{4}$, and
	\[
	\cE_{h_\ell ,G_\ell}(u_\ell)=\int_{\R^n} m_{h_\ell ,G_\ell}(\xi)|\widehat{\widetilde u_\ell}(\xi)|^2\dd\xi\leq C\Rightarrow	\int_{|\xi|\ge R}|\widehat{\widetilde u_\ell }(\xi)|^2\dd\xi
	\le \frac{4C}{aR^2}
	\]
	for all $\ell >\ell_0(R)$.  
	
	We may now apply the Kolmogorov-Riesz theorem to deduce the precompactness of $\{\widetilde u_\ell \}$ in $L^2(\R^n$). Indeed, as $\{\widetilde u_\ell \}$ is bounded and uniformly supported in the fixed compact set $K$, all that remains is to verify mean equicontinuity.
	Take any $\epsilon>0$, choose $R$ such that $\frac{4C}{aR^2}<\epsilon$, and such that for all $\ell \leq \ell_0(R)$ it holds that $ \int_{|\xi|\ge R}|\widehat{\widetilde u_\ell }(\xi)|^2\dd\xi<\epsilon$. Therefore it holds for all $\ell $ that $ \int_{|\xi|\ge R}|\widehat{\widetilde u_\ell }(\xi)|^2\dd\xi<\epsilon$.
	Writing
	\[
	\|\widetilde u_\ell (\cdot+z)-\widetilde u_\ell \|_2^2
	=\int_{\R^n}|e^{iz\cdot\xi}-1|^2 |\widehat{\widetilde u_\ell }(\xi)|^2\dd\xi,
	\]
	the right-hand side is made small uniformly in $\ell $ by taking $|z|$ small enough such that for all $|\xi|\leq R$, $|e^{i\xi z}-1|^2<\epsilon$, implying
	\[
	\|\widetilde u_\ell (\cdot+z)-\widetilde u_\ell \|_2^2\leq \epsilon\int_{|\xi|\leq R}|\widehat{\widetilde u_\ell }(\xi)|^2\dd\xi   +4\int_{|\xi|\geq R}|\widehat{\widetilde u_\ell }(\xi)|^2\dd\xi \leq \epsilon+4\epsilon= 5\epsilon.\]
	Thus $\{u_\ell \}$ is precompact. Assume now $u_\ell \to u$ in norm in $L^2(K)$.
	
	For each fixed $R$, the convergence $G_\ell \to G$ and $h_\ell \downarrow0$ gives
	\[
	m_{h_\ell ,G_\ell }(\xi)\longrightarrow \frac12\langle G\xi,\xi\rangle
	\]
	uniformly on $|\xi|\le R$.  Since $\widehat{\widetilde u_\ell }\to\widehat{\widetilde u}$ in $L^2(\R^n)$, we obtain
	\[
	\int_{|\xi|\le R}\frac12\langle G\xi,\xi\rangle |\widehat{\widetilde u}(\xi)|^2\dd\xi
	\le \liminf_{\ell \to\infty}\cE_{h_\ell ,G_\ell }(u_\ell )
	\]
	for any $R>0$.

	Letting $R\to\infty$ shows $\widetilde u\in H^1(\R^n)$.  Since bounded convex domains are Lipschitz, a function $u\in L^2(K)$ whose zero extension belongs to $H^1(\R^n)$ must be in $H^1(K)$ with zero trace on $\partial K$, and hence $u\in H^1_0(K)$ \cite[Theorem 4.10]{Necas_elliptic}.
	We conclude that 
	\[
	\cE_G(u)\le \liminf_{\ell \to\infty}\cE_{h_\ell ,G_\ell }(u_\ell ),
	\] proving the claim.
	
Now let $h_\ell \downarrow0$, and $\mathcal G\ni G_\ell \to G$. Let $u_{1,\ell },\ldots,u_{j,\ell }$ be orthonormal eigenfunctions corresponding to the first $j$ eigenvalues
	\[
	\mu_{1,h_\ell }(G_\ell )\le\cdots\le\mu_{j,h_\ell }(G_\ell )
	\]
	of $(I-\cQ_{h_\ell ,G_\ell })/h_\ell $.  By eq. \eqref{eq:limsup}, $\mu_{j,h_\ell }(G_\ell )$ is bounded from above uniformly in $\ell $.  Applying the precompactness claim successively and passing to a subsequence, we may assume
	\[
	u_{i,\ell }\to u_i\quad\text{in }L^2(K),\qquad i=1,\ldots,j.
	\]
	The limits $u_i\in L^2(K)$ are orthonormal.  For a unit vector $a=(a_1,\ldots,a_j)\in\mathbb C^j$, put
	\[
	u_\ell ^a=\sum_{i=1}^j a_i u_{i,\ell },
	\qquad
	u^a=\sum_{i=1}^j a_i u_i.
	\]
	Then
	\[
	\cE_{h_\ell ,G_\ell }(u_\ell ^a)=\sum_{i=1}^j |a_i|^2\mu_{i,h_\ell }(G_\ell )
	\le \mu_{j,h_\ell }(G_\ell ).
	\]
	Using \eqref{eq:liminf-form}, we get
	\[
	\cE_G(u^a)\le \liminf_{\ell \to\infty}\mu_{j,h_\ell }(G_\ell ).
	\]
	Thus the $j$-dimensional space $U=\operatorname{span}\{u_1,\ldots,u_j\}$ satisfies
	\[
	\max_{\substack{u\in U\\ \norm{u}_2=1}}\cE_G(u)
	\le \liminf_{\ell \to\infty}\mu_{j,h_\ell }(G_\ell ).
	\]
	The min--max principle gives
	\[
	\alpha_j(G)\le \liminf_{\ell \to\infty}\mu_{j,h_\ell }(G_\ell ).
	\]
	Therefore by the continuity of $\alpha_j$, 
	
	\[
	\liminf_{\ell \to\infty}(\mu_{j,h_\ell }(G_\ell )-\alpha_j(G_\ell ))\geq 0.
	\]
			It follows that
	\[ \liminf_{h\downarrow0 }\inf_{G\in\mathcal {G}} (\mu_{j, h}(G)-\alpha_j(G))\geq 0.\]
	Together with \eqref{eq:limsup}, 
	we conclude that 
	\[ \mu_{j, h}(G)\to\alpha_j(G)\]
	as $h\to 0$, uniformly in $G\in\mathcal G$.
	
	Finally take $j=1$, $h_\ell  \to 0$, $G_\ell \to G$.  The same compactness argument shows that every $L^2$-limit $\psi$ of a convergent subsequence of $\varphi_{h_\ell , G_\ell }$ has $\mathcal E_G(\psi)\leq \alpha_1(G)$.  By the min-max principle, $\mathcal E_G(\psi)= \alpha_1(G)$. Since the first eigenvalue of the elliptic operator associated with $\cE_G$ is simple, $\psi$ must be the positive normalized first eigenfunction $\psi_{1,G}$.  Therefore the whole sequence $\varphi_{h_\ell , G_\ell }$ converges to $\psi_{1, G}$ in $L^2(K)$.
	
	Since $\psi_{1, G_\ell }\to \psi_{1, G}$ as $G_\ell \to G$ by the simplicity of the first eigenvalue, this immediately implies that $\varphi_{h_\ell , G}\to \psi_{1, G}$ as $\ell \to \infty$, uniformly in $G\in\mathcal G$.	
\end{proof}

\begin{lemma}\label{lem:spectral-ratio}
	For any $0<c_K<\frac{\lambda_2(K)-\lambda_1(K)}{2}$ there exist $h_0(K)>0$ such that for all  $0<h<h_0(K)$,
	\[
	\frac{\rho_{2,h}}{\rho_h}\le e^{-c_Kh}.
	\]
 Here $\lambda_j(K)$ are the Dirichlet eigenvalues of $-\Delta$ on $K$.
 
	Furthermore, if $P$ is positive definite and  $K_t=P^tK$, one may choose $c_{K_t}$ and $h_0(K_t)$ locally uniformly in $t$.
\end{lemma}

\begin{proof}
	For the fixed domain $K$, Proposition \ref{prop:spectral-approx} with $G=I$ gives
	\[
	\frac{1-\rho_{j,h}}h\longrightarrow \frac{\lambda_j(K)}2,
	\qquad j=1,2.
	\]
	Hence
	\[
	\log\frac{\rho_{2,h}}{\rho_h}
	=\log\left(1-h\frac{1-\rho_{2,h}}h\right)
	-\log\left(1-h\frac{1-\rho_h}h\right)
	=-\frac{\lambda_2(K)-\lambda_1(K)}2h+o(h),
	\]
	which implies the first assertion.
	
	Let us verify that $c_K, h_0(K)$ can be chosen locally uniformly in $t$ for $K_t=P^tK$.  Let $U_t:L^2(K)\to L^2(K_t)$ be the unitary map
	\[
	(U_tf)(P^tx)=(\det P^t)^{-1/2}f(x).
	\]
	Then
	\[
	U_t^{-1}Q_{h,t}U_t=\cQ_{h,G_t},
	\qquad
	G_t=P^{-2t}.
	\]
	On a compact $t$-interval, the matrices $G_t$ form a compact subset of the positive definite cone, so Proposition \ref{prop:spectral-approx} can be applied to deduce
	\begin{equation}\label{eq:uniform_convergence}
		\frac{1-\rho_{j,h}(G_t)}h\longrightarrow  \alpha_j(G_t)=\frac{\lambda_j(K_t)}2,
		\qquad j=1,2,
	\end{equation}
	locally uniformly in $t$. 
	
	As both $\lambda_1$ and $\lambda_2$ depend continuously on $K$ in the Hausdorff metric, while $\lambda_1$ is simple, the fundamental gap $\lambda_2-\lambda_1$ is locally bounded away from $0$ and so $c_K$ can be chosen locally uniformly.

	The locally uniform convergence in eq. \eqref{eq:uniform_convergence} allows to choose $h_0(K)$ locally uniformly as well.
\end{proof}

For the next proof we will need the Dirichlet heat kernel $p_{\tau}^{K,D}(x,y)$. It holds for all $\Omega\subset K$ that $\mathbb P_x(X_\tau\in \Omega, \tau_K>\tau)=\int_\Omega p_{\tau}^{K,D}(x,y) dy$, where $X_s$ is the Brownian motion originating at $x$. It holds that $p_{\tau}^{K,D}(x,y)$ is strictly positive in the interior of $K\times K$.

We denote by $q_h^{(N)}(x,y)$ the integral kernel of $Q_h^N$.

\begin{lemma}\label{lem:coeff}
	There exist $h_1(K)>0$, $a_K>0$  and $\Lambda_K<\infty$ such that, for all $0<h<h_1(K)$,
	\[
	\varphi_h(0)\ge a_K,
	\qquad
	\ip{\1}{\varphi_h}_{L^2(K)}\ge a_K, \qquad \rho_h\ge e^{-\Lambda_Kh}.
	\]
	Furthermore, if $K_t=P^tK$ then $h_1(K)$, $a_K$ and $\Lambda_K$ can be chosen locally uniformly in $t$.
\end{lemma}

\begin{proof}
	For fixed $K$, Proposition \ref{prop:spectral-approx} gives
	\[
	\varphi_h\to\psi_1\quad\text{in }L^2(K),
	\]
	where $\psi_1>0$ is the $L^2$-normalized first Dirichlet eigenfunction of $K$.  Thus
	\[
	\ip{\1}{\varphi_h}\to\ip{\1}{\psi_1}>0.
	\]
	
	Next we bound $\varphi_h(0)$ from below.  Choose $r>0$ such that $B(0,4r)\subset K$.  Since $\psi_1$ is positive in the interior of $K$, and $\varphi_h\to\psi_1$ in $L^2(K)$,
	\[
	\int_{B(0,r)}\varphi_h(y)\dd y\ge b_K>0
	\]
	for all sufficiently small $h$. For any such $h$, choose an integer $\ell=\ell(h)$ with $1\leq \ell h\leq2$.  By the eigenfunction equation,
	\[
	\varphi_h(0)=\rho_h^{-\ell}Q_h^\ell\varphi_h(0)
	=\rho_h^{-\ell}\int_K q_h^{(\ell)}(0,y)\varphi_h(y)\dd y
	\]
	 Since $0<\rho_h\le1$,
	\[
	\varphi_h(0)\ge \int_{B(0,r)}q_h^{(\ell)}(0,y)\varphi_h(y)\dd y.
	\]
	The discretely killed kernel in $K$ dominates the continuously killed Dirichlet heat kernel in $B(0,2r)$: indeed, the event that Brownian motion stays in $B(0,2r)$ for all times $0\le s\le\ell h$ is contained in the event that it lies in $K$ at the discrete times $h,2h,\ldots,\ell h$.  Therefore
	\[
	q_h^{(\ell)}(0,y)\ge p_{\ell h}^{B(0,2r),D}(0,y).
	\]
	Now  $p_{s}^{B(0,2r),D}(0, y)$ is strictly positive and continuous on $(s, y)\in [1,2]\times \overline{B(0,r)}$, hence bounded below by some $b_K'>0$.  Therefore
	\[
	\varphi_h(0)\ge b_K'\int_{B(0,r)}\varphi_h(y)\dd y\ge b_K'b_K>0.
	\]
	Finally, Proposition \ref{prop:spectral-approx} gives
	\[
	\rho_h=1-\frac{\lambda_1(K)}2h+o(h),
	\]
	so $\rho_h\ge e^{-\Lambda_Kh}$ for any $\Lambda_K>\frac12 \lambda_1(K)$, for all $h<h_1(K)$.
	
	For $K_t=P^tK$, let us verify that all constants are uniform on compact $t$-intervals $I_0$. Let $U_t:L^2(K)\to L^2(K_t)$ be the unitary map used in Lemma \ref{lem:spectral-ratio}.  Write
	\[
	\varphi_{h,t}=U_t\Phi_{h,t},
	\qquad
	\psi_{1,t}=U_t\Psi_{1,t},
	\]
	so that $\Phi_{h,t},\Psi_{1,t}\in L^2(K)$.  Proposition \ref{prop:spectral-approx}, applied to the compact matrix family $G_t=P^{-2t}$, implies
	\[
	\sup_{t\in I_0}\|\Phi_{h,t}-\Psi_{1,t}\|_{L^2(K)}\longrightarrow0.
	\]
	Since $t\mapsto \Psi_{1,t}$ is continuous in $L^2(K)$ and positive, compactness of $I_0$ gives
	\[
	\inf_{t\in I_0}\int_K \Psi_{1,t}(x)\dd x>0.
	\]
	Because $\det P^t$ is bounded above and below on $I_0$, this gives a uniform lower bound for
	\[
	\int_{K_t}\varphi_{h,t}(y)\dd y
	= (\det P^t)^{1/2}\int_K\Phi_{h,t}(x)\dd x.
	\]
	Similarly, $0\in\operatorname{int}K_t$ uniformly, so one may choose $r>0$ with $B(0,4r)\subset K_t$ for all $t\in I_0$.  The map
	\[
	t\longmapsto \int_{B(0,r)}\psi_{1,t}(y)\dd y
	= (\det P^t)^{1/2}\int_{P^{-t}B(0,r)}\Psi_{1,t}(x)\dd x
	\]
	is continuous and strictly positive, hence it has a positive minimum on $I_0$.  The uniform $L^2$ convergence above then yields
	\[
	\inf_{t\in I_0}\int_{B(0,r)}\varphi_{h,t}(y)\dd y>0
	\]
	for all sufficiently small $h$, and the heat kernel comparison is made in the same fixed ball $B(0,2r)$. Finally, the first Dirichlet eigenvalues of $P^tK$ are uniformly bounded on $I_0$.
\end{proof}

\subsubsection{Two $L^2$ remainder estimates}

\begin{lemma}\label{lem:l2}
	There exist constants $C_K<\infty$, $c_K>0$, and $T_0>0$ such that the following hold for all $0<h<h_1(K)$.
	\begin{enumerate}[label=\textup{(\alph*)}]
		\item For every $N\ge0$,
		\[
		\norm{Q_h^N\1-\rho_h^N\ip{\1}{\varphi_h}\varphi_h}_2
		\le C_K\rho_h^N e^{-c_KNh}.
		\]
		\item If $kh\ge T_0$, then
		\[
		\norm{q_h^{(k)}(0,\cdot)-\rho_h^k\varphi_h(0)\varphi_h}_2
		\le C_K\rho_h^k e^{-c_Kkh}.
		\]
	\end{enumerate}
	The constants may be chosen uniformly for $K_t=P^tK$ on compact $t$-intervals.
\end{lemma}

\begin{proof}
	Part (a) follows from \eqref{eq:RN} and Lemma \ref{lem:spectral-ratio}:
	\[
	\norm{Q_h^N\1-\rho_h^N\ip{\1}{\varphi_h}\varphi_h}_2
	=\norm{R_h^N\1}_2
	\le \rho_{2,h}^N\norm{\1}_2
	\le |K|^{1/2}\rho_h^Ne^{-c_KNh}.
	\]
	
	For part (b) let $\ell=\ell(h)$ be an integer such that
	\[
	\frac12\leq \ell h\leq1.
	\]
	For $kh\ge2$ we have $k >\ell$.  Put
	\[
	a_\ell(y):=q_h^{(\ell)}(0,y).
	\]
	The discretely killed kernel is bounded above by the free heat kernel, hence
	\[
	0\le a_\ell(y)\le p_{\ell h}(0,y).
	\]
	Since $\ell h\geq \frac12$, this gives
	\[
	\norm{a_\ell}_2\leq \|p_{\ell h}(0,y)\|_{2}\leq C'_{K}
	\]
	uniformly in $h$.  Now
	\[
	q_h^{(k)}(0,\cdot)=Q_h^{k-\ell}a_\ell.
	\]
	By self-adjointness and the eigenfunction equation,
	\[
	\ip{a_\ell}{\varphi_h}=Q_h^\ell\varphi_h(0)=\rho_h^\ell\varphi_h(0).
	\]
	Therefore
	\[
	q_h^{(k)}(0,\cdot)-\rho_h^k\varphi_h(0)\varphi_h=R_h^{k-\ell}a_\ell.
	\]
	Using eq. \eqref{eq:RN}, Lemma \ref{lem:spectral-ratio} and the lower bound $\rho_h\ge e^{-\Lambda_Kh}$ from Lemma \ref{lem:coeff}, we obtain
	\begin{align*}
	\norm{R_h^{k-\ell}a_\ell}_2&
	\leq C'_{K}\rho_h^{k-\ell}e^{-c_K(k-\ell)h}
	\leq C'_{K} e^{\Lambda_K \ell h+c_K \ell h}\rho_h^ke^{-c_Kkh}
	\\&\leq C'_{K}  e^{\Lambda_K+c_K} \rho_h^ke^{-c_Kkh}=C_K \rho_h^ke^{-c_Kkh},
	\end{align*}
	proving part (b) for $kh\ge2$, that is $T_0=2$.
	
	The proof is uniform in $t$ for $K_t=P^tK$ on compact intervals: Lemmas \ref{lem:spectral-ratio} and \ref{lem:coeff} are uniform, the volumes $|K_t|$ are uniformly bounded, and the free heat kernel $L^2$ bound is independent of the domain.
\end{proof}

\subsubsection{Proof of Theorem \ref{thm:discrete_quasi}}\label{sec:mixing}

Let
\[
E_{m,h}:=\{X_{jh}\in K\text{ for }1\le j\le m\}.
\]
The conditional probability density of $X_{kh}$ given $E_{m,h}$ is
\begin{equation}\label{eq:cond-density}
	\mu_{k,m}^h(y)
	=\frac{q_h^{(k)}(0,y)Q_h^{m-k}\1(y)}{Q_h^m\1(0)}.
\end{equation}
We will first assume $kh\geq T_K$ and $(m-k)h\ge T_K$, where $T_K>T_0$ is to be determined.  Put $N=m-k$,
\[
\eta_h:=\ip{\1}{\varphi_h},
\qquad
A:=\rho_h^k\varphi_h(0),
\qquad
B:=\rho_h^N\eta_h.
\]
By Lemma \ref{lem:l2},
\[
q_h^{(k)}(0,\cdot)=A\varphi_h+r_k,
\qquad
Q_h^N\1=B\varphi_h+s_N,
\]
with
\[
\norm{r_k}_2\le C_K\rho_h^ke^{-c_Kkh},
\qquad
\norm{s_N}_2\le C_K\rho_h^Ne^{-c_KNh}.
\]
Lemma \ref{lem:coeff} gives $\varphi_h(0)\ge a_K$ and $\eta_h\ge a_K$.  Hence by Cauchy-Schwarz, and since $\|\varphi_h\|_2=1$, 
\begin{align*}
	&\norm{q_h^{(k)}(0,\cdot)Q_h^N\1-AB\varphi_h^2}_{L^1(K)} \\
	&\qquad\le A\norm{s_N}_2+B\norm{r_k}_2+\norm{r_k}_2\norm{s_N}_2 \\
	&\qquad\le C_K'(ABe^{-c_KNh}\eta_h^{-1}+ ABe^{-c_Kkh}\varphi_h(0)^{-1}+ABe^{-c_K(k+N)h}),\\
	&\qquad\le C_K'' AB\left(e^{-c_Kkh}+e^{-c_KNh}\right).
\end{align*}
where the constant $C_K''$ depends only on $K$, and locally uniformly on $t$ for $K_t=P^tK$.

Therefore

\begin{align*} Q_h^m\1(0)=\int_K q_h^{(k)}(0,y)Q_h^N\1(y)dy=AB\int_K \varphi_h^2 + E=AB+E, \end{align*}
where
\[ |E|\leq C_K'' |K|AB\left(e^{-c_Kkh}+e^{-c_KNh}\right)=C_K'''AB(e^{-c_Kkh}+e^{-c_KNh}).\]

Choose $T_K>T_0$ large enough so that $e^{-c_Kkh}+e^{-c_KNh}<\frac{1}{2C_K'''}$ whenever $kh,Nh\ge T_K$, so that
\[ |E|=|Q_h^m\1(0)-AB|\leq \frac12 AB .\] 
We find that

\begin{align*}
	\norm{\mu_{k,m}^h-\varphi_h^2}_{L^1(K)}&= \frac{1}{AB+E} \| q_h^{(k)}(0,\cdot)Q_h^N\1 (y)-Q_h^m\1(0)\varphi_h^2\|_{L^1(K)}\\
	&\leq \frac{1}{\frac 12 AB}   \left( \norm{q_h^{(k)}(0,y)Q_h^N\1-AB\varphi_h^2}_{L^1(K)} + \|E \varphi_h^2\|_{L^1(K)}   \right) 
	\\ &\leq \frac{2}{AB} (C_K''AB(e^{-c_Kkh}+e^{-c_KNh})+ |E|)
\end{align*}
whenever $kh\ge T_K$ and $(m-k)h\ge T_K$, that is

\begin{equation}\label{eq:l1-main}
	\norm{\mu_{k,m}^h-\varphi_h^2(y)}_{L^1(K)}\leq  C_K'''(e^{-c_Kkh}+e^{-c_KNh}).
\end{equation}
Now if $kh\leq T_K$ or $Nh\leq T_K$, then the claimed upper bound is merely a constant depending on $K$. As	$\norm{\mu_{k,m}^h-\varphi_h^2}_{L^1(K)}\leq 2$, those cases can be accommodated by adjusting $C_K$. 

Finally, assuming $K$ lies in a ball of radius $R$, \eqref{eq:l1-main} implies
\[
\left|\E_0[X_{kh}\mid E_{m,h}]-\int_K y\varphi_h(y)^2\dd y\right|
\le R\,\norm{\mu_{k,m}^h-\varphi_h^2(y)}_{L^1(K)},
\]
proving Theorem \ref{thm:discrete_quasi} for fixed $K$.

The locally uniform bound for $K_t=P^tK$ is clear from the local uniformity of $R$ and the bounds provided by  Lemmas \ref{lem:coeff}, and \ref{lem:l2}.

\subsection{A discrete Kac formula}

We will need a diagonal version of the Kac formula, where we only condition on the Brownian motion staying in $K$ at discrete times $\frac{j}{m}T$, $1\leq j\leq m$, and simultaneously letting $T\to\infty$ while the resolution $h=\frac{T}{m}\to 0$.

\begin{proposition}\label{prop:discrete-kac}
Let $K\subset\R^n$ be a convex body and let $0\in\intt K$.  Suppose $T=T(m)=m^a$ with $0<a<1$.  Then
	\[
	\frac12\lambda_1(K)=
	-\lim_{m\to\infty}\frac1{T(m)}
	\log \mathbb P_0\{X_{jT/m}\in K\text{ for }1\le j\le m\}.
	\]
Moreover, the convergence is locally uniform in $t$ for $K_t=P^tK$.
\end{proposition}

\begin{proof}
	Denote $h=\frac{T}{m}=m^{a-1}$. We will use the notation and results of section \ref{sec:mixing}. It holds that
\[\mathbb P_0\{X_{jT/m}\in K\text{ for }1\le j\le m\}=Q_h^m\1(0), \]
We may assume $m$ is even, and choose $k=\frac{m}{2}$ so that $N=\frac{m}{2}$. Then
\[ Q_h^m\1(0) =AB(1+O(e^{-\frac 12 c_K m^a})  )\sim  AB,\quad m\to\infty .\]
It holds that
\[ AB=\rho_h^m\varphi_h(0)\eta_h, \]
and by Proposition  \ref{prop:spectral-approx}, $\eta_h=\int _K\varphi_h$ is bounded from above as $h\to 0$. Similarly, $\varphi_h(0)$ is bounded from above as $h\to 0$.
Indeed, choose $\ell$ with $\frac12\leq \ell h\leq1$. Then as in the proof of Lemma \ref{lem:l2},
\[
\varphi_h(0)
=\rho_h^{-\ell}\int_K q_h^{(\ell)}(0,y)\varphi_h(y),dy
\leq \rho_h^{-\ell}
\|p_{\ell h}(0,\cdot)\|_2\|\varphi_h\|_2= \rho_h^{-\ell}
\|p_{\ell h}(0,\cdot)\|_2,
\]
and using the bound $\rho_h^{-\ell}\leq e^{\Lambda_K\ell h}\leq e^{\Lambda _K}$ provided by Lemma \ref{lem:coeff} we conclude that 
\[ \varphi_h(0)\leq C_K.\]

Consequently, 

\[ \frac{1}{T} \log  Q_h^m\1(0)  =\frac{m}{T}\log \rho_h + O(\frac{1}{T}).\]
By eq. \eqref{eq:uniform_convergence},
	\begin{equation*}
	\frac{1-\rho_{h}}h\to \frac{\lambda_1(K)}2,
	\end{equation*}
	and so 
	\[\frac{m}{T}\log \rho_h \sim h^{-1} \log(1-h\frac12\lambda_1(K))\sim -\frac12 \lambda_1(K),\]
	proving the claimed formula. 
	
	The locally uniform in $t$ convergence follows from the corresponding statements of Proposition \ref{prop:spectral-approx} and Lemma  \ref{lem:coeff}.
\end{proof}

\subsection{The $L^2$ estimate}\label{subsec:L2-estimate}

The following is the $L^2$ estimate used in the proof of the B-theorem \cite{CEFM}.

\begin{lemma}\label{lem:L2}
	Let $\nu$ be a smooth probability measure on $\R^N$ given by  $d\nu=e^{-W}dx$ with $\operatorname{Hess}W\geq I_N$. Let $q\in H^1(\R^N, \nu)$ satisfy $\int q(x)d\nu(x)=0$. Then
	\begin{equation}\label{eq:L2-estimate}
		\int q^2\,d\nu-\frac12\int |\nabla q|^2\,d\nu\leq \left|\int\nabla q\,d\nu\right|^2.
	\end{equation}
\end{lemma}

\begin{proof}

	Let $L=\Delta-\langle\nabla W,\nabla\rangle$.  The range of $L$ on $C_c^\infty(\R^N)$ is dense in the subspace of $L^2(\nu)$ of mean-zero functions.  Hence it suffices to prove, for all $u\in C_c^\infty(\R^N)$,
	\[
	\int\left((Lu-q)^2-q^2+\frac12|\nabla q|^2\right)d\nu
	\geq
	-\left|\int\nabla q\,d\nu\right|^2.
	\]
	Using integration by parts,
	\[
	-\int (Lu)q\,d\nu=\int\langle\nabla u,\nabla q\rangle d\nu.
	\]
	The Bochner identity for $L$ gives
	\[
	\int (Lu)^2d\nu
	=\int\left(\|\operatorname{Hess}u\|_{\HS}^2
	+\langle \operatorname{Hess}W\nabla u,\nabla u\rangle\right)d\nu
	\ge
	\int\left(\|\operatorname{Hess}u\|_{\HS}^2+|\nabla u|^2\right)d\nu.
	\]
	Thus it is enough to show
	\[
	\int\left(\|\operatorname{Hess}u\|_{\HS}^2+|\nabla u|^2
	+2\langle\nabla u,\nabla q\rangle+\frac12|\nabla q|^2\right)d\nu\geq	-\left|\int\nabla q\,d\nu\right|^2.
	\]
	Let $c=\int\nabla u\,d\nu$ and write $\nabla u=\nabla u_0+c$, where $\int\nabla u_0\,d\nu=0$.

	The last inequality can be rewritten as
	
	\[ |c|^2+ \int\left( \|\mathrm{Hess}u_0\|_{\HS}^2-|\nabla u_0|^2+\frac 12 |2\nabla u_0+\nabla q|^2\right)d\nu +2\left\langle c, \int \nabla q d\nu\right\rangle\geq	-\left|\int\nabla q\,d\nu\right|^2.  \]

	The Gaussian Poincar\'e inequality for measures with $\operatorname{Hess}W\ge I$ \cite{BrascampLieb} applied to each coordinate of the mean-zero vector $\nabla u_0$ implies
	\[
	\int\left(\|\operatorname{Hess}u_0\|_{\HS}^2-|\nabla u_0|^2\right)d\nu\ge0,
	\]
	and clearly
	\[
	|c|^2+2\left\langle c,\int\nabla q\,d\nu\right\rangle= \left|c+\int\nabla q\,d\nu \right|^2-\left|\int\nabla q\,d\nu\right|^2
	\ge
	-\left|\int\nabla q\,d\nu\right|^2,
	\]
	concluding the proof.
\end{proof}

We now apply this bound to the restricted Gaussian measure.

\begin{lemma}\label{lem:L2_Applied}
	Let $\gamma_N$ be the standard Gaussian measure on $\R^N$. Let $A\subset\R^N$ be convex and let
	\[
	F(t)=\log\gamma_N(e^{tS}A),
	\]
	where $S$ is symmetric.  Let $d\mu_t =e^{-F(t)}\1_{e^{tS}A} d\gamma_N$ be $\gamma_N$ restricted to $e^{tS}A$, normalized to be a probability measure.
	Then 
		\begin{equation}\label{eq:L2_bound}F''(t)\leq  4\left|S\int x\,d\mu_t(x)\right|^2.
	\end{equation}
	\end{lemma} 
	
	\begin{proof}
 Put
\[
Q(x)=\langle Sx,x\rangle,
\qquad
q(x)=Q(x)-\int Q\,d\mu_t, \qquad \nabla q(x)=2 Sx.
\]
A direct differentiation gives
\begin{equation}\label{eq:F-second-derivative}
        F''(t)=\int q^2\,d\mu_t-\frac12\int |\nabla q|^2\,d\mu_t.
\end{equation}
	Approximate $\mu_t$ by smooth probability measures $d\nu_j=e^{-W_j}dx$ as follows. Set $d_t(x)=\mathrm{dist}(x, e^{tS}A)$, let $\eta(x)\in C^\infty(\R^N)$ be non-negative, compactly supported in the unit ball, with $\int\eta=1$. Denote $\eta_\epsilon(x)=\epsilon^{-N}\eta(x/\epsilon)$.
	Define
	\[W_j(x)=\frac12 |x|^2+j(d_t^2 \ast \eta_{1/j}) (x)+c_j,\]
	where $c_j$ is chosen such that $\int e^{-W_j}=1$.	Then $ \operatorname{Hess}W_j\geq I_N$, and $\int p(x) d\nu_j\to \int p(x)d\mu_t$ for any polynomial $p$.
	
	Letting $q_j:=Q-\int Q\,d\nu_j$ and applying Lemma \ref{lem:L2} yields
	
	\[ \int q_j^2\,d\nu_j-\frac12\int |\nabla q_j|^2\,d\nu_j\leq  \left|\int\nabla q_j\,d\nu_j\right|^2=4\left|S\int x\,d\nu_j(x)\right|^2.\]
	In the limit $j\to\infty$ we arrive, using eq. \eqref{eq:F-second-derivative}, at the claimed bound. 
\end{proof}
\subsection{Proof of log-convexity in Theorem \ref{thm:main-logconvex-intro}}\label{sec:proof-logconvex}

We now prove that $\lambda_1(P^tK)$ is log-convex.  Translate $K$ to have $0$ in its interior. By an orthogonal change of variables, write $P=e^D$ with $D$ diagonal.  Fix $M>0$ and consider $t\in[-M,M]$.  Put
\[
        K_t=P^tK=e^{tD}K.
\]
Let $T=T(m)=m^{1/\zeta}$, where $\zeta>1$, and let $h=T/m$.  Define
\[
        f_m(t)=\log\mathbb P_0\{X_h,X_{2h},\ldots,X_{mh}\in K_t\}.
\]
Let $\Sigma_m$ be the covariance matrix of $X^m$.  Since $D_m=D\otimes I_m$ commutes with $\Sigma_m$, we may use the standard Gaussian measure to write
\[
        f_m(t)=\log\gamma_{nm}(e^{tD_m}\Sigma_m^{-1/2}K^m).
\]
Let $\mu_{t,m}$ be the normalized Gaussian measure on
\[
        e^{tD_m}\Sigma_m^{-1/2}K^m=
        \Sigma_m^{-1/2}K_t^m.
\]
By Lemma \ref{lem:L2_Applied},
\begin{equation}\label{eq:fm-second-bound}
        f_m''(t)\le 4 \left|D_m\int x\,d\mu_{t,m}(x)\right|^2\leq C  \left|\int x\,d\mu_{t,m}(x)\right|^2.
\end{equation}
Changing variables back to the coordinates of the Brownian motion gives
\begin{equation}\label{eq:gradq-R}
        \int x\,d\mu_{t,m}(x)
        =\Sigma_m^{-1/2}R(t),
\end{equation}
where
\[
        R(t)=\frac{1}{\mathbb P_0(X^m\in K_t^m)}
        \int_{K_t^m} y\,d\widetilde\gamma_m(y)
        =\mathbb E_0[X^m\mid X^m\in K_t^m].
\]
Write $R(t)=(R_1(t),\ldots,R_m(t))$.

Recall the discrete time killed transition operator $Q_h=Q_{h, K_t}$ and its first $L^2$-normalized positive eigenfunction $\varphi_h=\varphi_{h, K_t}$ defined in section \ref{subsec:discrete-quasi}.
Write
\[
z_{h,t}=\int_{K_t} x\varphi_{h, K_t}(x)^2\dd x,
\]
and note that $z_{h,t}\in K_t\subset\R^n$ and in particular uniformly bounded in $|t|\leq M$. 
Let $\eps=T^{-\alpha}$ with $0<\alpha<1$. By Theorem \ref{thm:discrete_quasi}, it holds uniformly in $|t|\le M$ that
\[
        R_j(t)=z_{h,t}+O(e^{-c_M\eps T}),
\]
whenever $\eps m\le j\le(1-\eps)m$. Thus we may write
\[
        R(t)=U_t+V_t \in (\R^n)^m,\]
    where $U_t=(z_{h,t},\dots, z_{h, t})$, while $V_t$ has $O(1)$ entries in the first and last $\eps m$ positions and $O(e^{-c\eps T})$ entries in the remaining $(1-2\epsilon)m$ positions.

Using the inverse covariance \eqref{eq:CM-norm} we find 
\begin{equation}\label{eq:U-estimate}
        |\Sigma_m^{-1/2}U_t|^2
        =U_t^T\Sigma_m^{-1}U_t
        \le C_M\frac{m}{T}.
\end{equation}
Similarly,
\begin{equation}\label{eq:V-estimate}
        |\Sigma_m^{-1/2}V_t|^2=V_t^T\Sigma_m^{-1}V_t
        \le
        C_M\left(\eps\frac{m^2}{T}
        +\frac{m^2}{T}e^{-c_MT^{1-\alpha}}\right).
\end{equation}
Hence
\begin{equation}\label{eq:gradq-estimate}
        \left|\int x \,d\mu_{t,m}(x)\right|
        \le
        C_M\left(\sqrt{\frac{m}{T}}
        +\sqrt\eps\frac{m}{\sqrt T}
        +\frac{m}{\sqrt T}e^{-c_MT^{1-\alpha}/2}\right).
\end{equation}
Now set
\[
        \zeta=1+\delta,
        \qquad
        \alpha=1-\delta,
\]
with $0<\delta<1/3$.  Since $m=T^\zeta$, the dominant term in \eqref{eq:gradq-estimate} is 
\[\sqrt\eps\frac{m}{\sqrt T}=O(T^{3\delta/2}).\]
Therefore
\begin{equation}\label{eq:betaT}
        f_m''(t)\le \beta(T):=C_MT^{3\delta}=o(T)
\end{equation}
uniformly for $t\in[-M,M]$.

It follows that
\[
        -\frac1T f_m(t)+\frac{\beta(T)}{2T}t^2
\]
is convex on $[-M,M]$.  By Proposition \ref{prop:discrete-kac},
\[
        -\frac1T f_m(t)\longrightarrow \frac12\lambda_1(K_t)
\]
pointwise as $m\to\infty$, and $\beta(T)/T\to0$. As convexity is preserved under pointwise limits in intervals, 
\[
        t\longmapsto \lambda_1(P^tK)
\]
is convex on $[-M,M]$.  Since $M$ is arbitrary, $\lambda_1(P^tK)$ is convex on $\R$.

As in Theorem \ref{thm:symmetric-logconvex}, it now follows by Lemma \ref{lem:convex_logconvex} that $ \lambda_1(P^tK)$ is log-convex.
\end{proof}

\subsection{Strict log-convexity}
To complete the proof of Theorem \ref{thm:main-logconvex-intro}, it remains to show that $\lambda_1(P^tK)$ is strictly log-convex for $\SL(n)\ni P\neq I_n$. This will follow from the following two statements. 
The first asserts that $\lambda_1(gK)$ is proper.

\begin{proposition}\label{prop:minimum_existence}
	For any convex body $K\subset\R^n$, the function $\SL(n)\ni g\mapsto \lambda_1(gK)$ satisfies $\lim_{g\to \infty} \lambda_1(gK)=\infty$.
\end{proposition}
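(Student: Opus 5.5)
We show that $\lambda_1(gK)$ is bounded below by a quantity that blows up as $g\to\infty$ in $\SL(n)$, using the inradius. The standard comparison is domain monotonicity: if $gK$ has inradius $r(gK)$, then $gK$ is contained in a slab of width comparable to $r(gK)$, and this gives the bound from below.

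\emph{Step 1: reduction to singular values.} Write $g=U\,\mathrm{diag}(\sigma_1,\dots,\sigma_n)\,V$ with $U,V\in\OO(n)$ and $\sigma_1\ge\dots\ge\sigma_n>0$, $\prod\sigma_i=1$. Since $\lambda_1$ is invariant under orthogonal maps, $\lambda_1(gK)=\lambda_1(\mathrm{diag}(\sigma)VK)$. As $g\to\infty$ in $\SL(n)$ we have $\sigma_1\to\infty$, and since the determinant is $1$, also $\sigma_n\to0$. The set $\{VK:V\in\OO(n)\}$ is compact, and all these bodies lie in a fixed ball $B(0,R)$ with $R$ depending only on $K$.

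\emph{Step 2: a slab containing $gK$.} The body $\mathrm{diag}(\sigma)VK$ is contained in $\mathrm{diag}(\sigma)B(0,R)$, which lies in the slab $\{|x_n|\le \sigma_n R\}$. By domain monotonicity and separation of variables, the Dirichlet eigenvalue of a slab of width $w$ is $\pi^2/w^2$. Hence
\[
\lambda_1(gK)\ \ge\ \frac{\pi^2}{4\sigma_n^2R^2}\longrightarrow\infty .
\]
Only the smallest singular value matters here. The slab is unbounded, so we should phrase monotonicity through the Rayleigh quotient on $H^1_0(gK)$: a one-dimensional Poincaré inequality in the $x_n$ variable, integrated over the other variables, gives $\int|\partial_n u|^2\ge \frac{\pi^2}{(2\sigma_nR)^2}\int u^2$ for every $u\in H^1_0$ supported in the slab.

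\emph{Main obstacle.} The substantive point is the claim that $g\to\infty$ in $\SL(n)$ forces $\sigma_n\to 0$. This holds because $\|g\|=\sigma_1\to\infty$ and $\prod\sigma_i=1$ give $\sigma_n\le \sigma_1^{-1/(n-1)}\to 0$. Everything else is routine.
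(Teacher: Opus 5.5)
Your proof is correct, and it takes a genuinely different and more elementary route than the paper.

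\textbf{The paper's route.} The paper first normalizes $K$ to have centroid at the origin. It then shows by a volume argument that $r(gK)\to 0$: if the inradius stayed bounded below, the cone over a cross-section of the inscribed ball, with apex at a far point of $gK$ in the direction of a large eigenvector of $g^Tg$, would have unbounded volume, contradicting $|gK|=|K|$. Finally it invokes the Hersch--Protter type bound $\lambda_1\ge \pi^2/(4r^2)$ of Payne--Stakgold, which genuinely uses convexity.

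\textbf{Your route.} You trap $\mathrm{diag}(\sigma)VK$ in the slab $\{|x_n|\le \sigma_n R\}$ and use only the one-dimensional Poincar\'e inequality on slices. This gives the explicit bound
\[
\lambda_1(gK)\ \ge\ \frac{\pi^2}{4R^2}\,\|g^{-1}\|^2,
\]
since $\sigma_n=\|g^{-1}\|^{-1}$. It needs neither convexity nor any centering of $K$, only boundedness. The same slab also gives $r(gK)\le \sigma_n R\to 0$, so you recover the paper's intermediate inradius statement for free.

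\textbf{Minor remarks.} Your opening sentence about a slab of width comparable to $r(gK)$ (a Steinhagen-type fact) is never used and can be dropped. The estimate $\sigma_n\le \sigma_1^{-1/(n-1)}$ requires $n\ge 2$; for $n=1$ the statement is vacuous because $\SL(1)$ is trivial.
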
  
\proof
Assume $0$ is the centroid of $K$. Denoting by $r(K)$ the inradius, we will show $r(gK)\to 0$ as $\SL(n)\ni g\to \infty$.

Now if $g_j\in\SL(n)$ approaches $\infty$, there is an eigenvalue $\mu_j\to \infty$ of $g_j^Tg_j$.
We may find corresponding eigenvectors $v_j^\pm\in\partial K$ such that $v^-_j=-\alpha v^+_j$ with $\alpha_j>0$. It then holds that $|g_jv^\pm_j|^2=\mu_j|v^\pm_j|^2\to \infty$. 

Assume in the way of contradiction there is $\epsilon>0$ such that $r_j=r(g_jK)\geq \epsilon$ for all $j$. Let $B_j$ be the inscribed ball of $g_jK$,  and consider an affine hyperplane $H$ through the center of $B_j$ which is perpendicular to the line $\mathrm{Span}(g_jv^\pm_j)$. At least one of the points $g_jv^\pm_j$, say $g_jv^+_j$, has distance at least $\frac12 |g_jv^+_j-g_jv^-_j|$ from $H$.
Then $ C_j:=\mathrm{Conv}(g_jv^+_j, H\cap B_j)\subset g_jK$ while \[\vol(C_j)=\frac{1}{n}r_j^{n-1}|B^{n-1}| \mathrm{dist}(g_jv_j^+, H)\geq c_n\epsilon^{n-1}|g_jv^+_j-g_jv^-_j| \to \infty,\] in contradiction to $\vol(g_jK)=\vol(K)$. 

It follows from Payne--Stakgold \cite{Payne_Stakgold}, see also Protter \cite{Protter}, that $\lambda_1(K)\geq \frac{\pi^2}{4}\frac{1}{r(K)^2}$, and so  $\lambda_1(gK)\to \infty$ as $\SL(n)\ni g\to \infty$. 
\endproof

The second lemma is a standard result from perturbation theory.

\begin{lemma}\label{lem:analytic}
	For any convex body $K$ and $P$ positive definite, $t\mapsto \lambda_1(P^tK)$ is real analytic in $t\in \R$.
\end{lemma}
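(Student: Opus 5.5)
The approach is to transport the problem to a fixed domain and apply Kato's analytic perturbation theory for holomorphic families of type (B). Set $G_t=P^{-2t}$. As in the proof of Lemma \ref{lem:positive_monotone}, the change of variables $x\mapsto P^tx$ gives
\[
\lambda_1(P^tK)=\inf_{v\in H^1_0(K)\setminus\{0\}}\frac{\int_K\langle G_t\nabla v,\nabla v\rangle}{\int_K v^2}.
\]
So $\lambda_1(P^tK)$ is the lowest eigenvalue of the self-adjoint operator $A_t$ on $L^2(K)$ attached to the closed form
\[
a_t(v)=\int_K\langle G_t\nabla v,\nabla v\rangle, \qquad \operatorname{Dom}(a_t)=H^1_0(K).
\]
The form domain does not depend on $t$, which is the point of working on $K$ rather than on $P^tK$.

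The first step is to complexify $t$. Write $P=e^D$ with $D$ diagonal after an orthogonal change of coordinates. For $t$ in a complex strip $|\operatorname{Im}t|<\delta$, the matrix $G_t=e^{-2tD}$ is an entire function of $t$. Define the sesquilinear form
\[
a_t(u,v)=\int_K\langle G_t\nabla u,\overline{\nabla v}\rangle
\]
on the fixed domain $H^1_0(K)$. For each $u$, the map $t\mapsto a_t(u)$ is holomorphic. For $t_0$ real, $G_{t_0}\ge cI$. Hence for $t$ in a small complex neighbourhood of $t_0$,
\[
|\operatorname{Im} a_t(u)|\le C|t-t_0|\,\|\nabla u\|_2^2 \qquad\text{and}\qquad \operatorname{Re} a_t(u)\ge \tfrac c2\|\nabla u\|_2^2 .
\]
So $\{a_t\}$ is a holomorphic family of sectorial forms with constant domain, that is, a holomorphic family of type (a) in Kato's sense. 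The associated operators form a self-adjoint holomorphic family of type (B) near the real axis.

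The second step is to show that the lowest eigenvalue is isolated and simple. By compactness of the embedding $H^1_0(K)\hookrightarrow L^2(K)$, $A_t$ has compact resolvent. Its lowest eigenvalue $\lambda_1(P^tK)$ is simple, since $P^tK$ is connected and the first Dirichlet eigenvalue is simple. This is already used in the proof of Proposition \ref{prop:spectral-approx}. It is also isolated from $\lambda_2(P^tK)$.

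Kato's theorem for type (B) families then applies. Near each real $t_0$ there is a simple eigenvalue branch $\lambda(t)$, holomorphic in $t$, with $\lambda(t_0)=\lambda_1(P^{t_0}K)$. The remaining step, which is the only delicate point, is to check that for real $t$ this branch remains the lowest eigenvalue and is not crossed by another eigenvalue.

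This follows from continuity of the spectrum, concretely from continuity of $\lambda_1$ and $\lambda_2$ in $t$. The spectral gap $\lambda_2-\lambda_1$ is positive at $t_0$. Stability of separated parts of the spectrum under type (B) perturbation keeps exactly one eigenvalue near $\lambda_1(P^{t_0}K)$ and none below it. Therefore $\lambda(t)=\lambda_1(P^tK)$ for real $t$ near $t_0$. Since $t_0$ was arbitrary, $t\mapsto\lambda_1(P^tK)$ is real analytic on $\R$.
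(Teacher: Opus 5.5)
Your proposal is correct and follows the same route as the paper. Both arguments pull the problem back to the fixed form domain $H^1_0(K)$ via $\int_K\langle P^{-2t}\nabla u,\nabla \bar v\rangle$, observe that this is a holomorphic type (a) family with compact resolvent and a simple isolated lowest eigenvalue, and invoke Kato's analytic perturbation theory. You also supply details the paper leaves implicit: the sectoriality estimates for complex $t$, and the check via continuity of $\lambda_1,\lambda_2$ that the analytic branch stays the lowest eigenvalue. Working at an arbitrary $t_0$ instead of replacing $K$ by $P^{t_0}K$ is equivalent.
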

\proof

Pull the Dirichlet problem on $P^tK$ back to $K$.  It is associated with the
closed form
\[
  \mathfrak a_t(u,v)
  =\int_K\langle P^{-2t}\nabla u,\nabla\overline v\rangle\,dx,
  \qquad
  \operatorname{Dom}(\mathfrak a_t)=H^1_0(K).
\]
The form domain is independent of $t$, and 
it is easy to check that $\mathfrak a_t(u,v)$ is a  holomorphic family of forms in the sense of Kato; see
\cite[Chapter~VII]{Kato}.  Since $K$ is bounded, the associated
operators have compact resolvent.  The first eigenvalue is simple and isolated,
and therefore Kato's analytic perturbation theory implies that
\[
  t\longmapsto\lambda_1(P^tK)
\]
is real analytic in a neighborhood of zero.   Replacing $K$ by $P^tK$ yields analyticity for all $t\in\R$.
\endproof

Now if $\SL(n)\ni P\neq I_n$ then $P^t\to \infty$ at $t\to \pm \infty$. Consequently, $\lim_{t\to\pm \infty}\lambda_1(P^tK)=\infty$ by Proposition \ref{prop:minimum_existence}. If $\log \lambda_1(P^tK)$ is affine in an interval then by analyticity it must be affine everywhere, and so it is impossible that $\lim_{t\to\pm\infty}\log \lambda_1(P^tK)=\infty$. This contradiction shows $\log \lambda_1(P^tK)$ is strictly log-convex.

\subsection{Faber--Krahn position}

\begin{proof}[Proof of Theorem \ref{thm:FK-position}]
The existence follows from Proposition \ref{prop:minimum_existence}, together with the continuity of $\lambda_1$ with respect to the Hausdorff metric on convex bodies.

For uniqueness, assume that $K_0$ and $K_1=TK_0$ both minimize $\lambda_1$ on  $\mathrm{SL}(n)K$, with $\lambda_1(K_0)=\lambda_1(K_1)$ and $T$ not orthogonal.
Let $T=BP$ be the polar decomposition with $B$ orthogonal and $P$ positive definite, $\det P=1$, $P\neq I_n$. We then have $\lambda_1(TK_0)=\lambda_1(P K_0)$.
But this contradicts the strict log-convexity of $\lambda_1(P^tK)$ asserted in \ref{thm:main-logconvex-intro}.
\end{proof}

\begin{proof}[Proof of Proposition \ref{prop:EL}]
Let $S$ be symmetric with $\tr S=0$, and put $A_t=e^{tS}$.  Pulling the Rayleigh quotient on $A_tK$ back to $K$ gives
\[
        \lambda_1(A_tK)=
        \inf_{v\in H^1_0(K)\setminus\{0\}}
        \frac{\int_K \langle e^{-2tS}\nabla v,\nabla v\rangle\,dx}{\int_K v^2\,dx}.
\]
Differentiating at $t=0$ and using simplicity of the first eigenvalue gives
\[
        \left.\frac{d}{dt}\right|_{t=0}\lambda_1(A_tK)
        =-2\int_K\langle S\nabla \psi,\nabla \psi\rangle\,dx.
\]
At a minimizer this derivative vanishes for every trace-free symmetric $S$, which is equivalent to \eqref{eq:EL-energy}.  
This establishes the necessary condition.

To see that it is sufficient, observe that it implies that $t=0$ is a critical point of $\lambda_1(P^tK)$ for all $P>0$, $P\neq I_n$ with $\det P=1$. As this function is strictly log-convex, a critical point can only correspond to a global minimum. As this is true for all $P$, $K$ must be in Faber--Krahn position.
\end{proof}

\begin{proposition}\label{prop:FK_continuity}
	The function $K\mapsto \Lambda_{\FK}(K)$ is continuous in the Hausdorff metric.
\end{proposition}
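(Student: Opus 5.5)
We need to prove that $K\mapsto \Lambda_{\FK}(K)$ is continuous in the Hausdorff metric. The plan is to prove upper and lower semicontinuity separately, using two facts: continuity of $\lambda_1$ and of volume on convex bodies in the Hausdorff metric, and the properness from Proposition \ref{prop:minimum_existence}. The difficulty in the properness step is that it must hold \emph{uniformly} along a convergent sequence of bodies.

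\emph{Upper semicontinuity.} Let $K_j\to K$ in the Hausdorff metric. By $\GL(n)$-invariance, $\Lambda_{\FK}(K)\le \lambda_1(AK)|AK|^{2/n}$ for every $A\in\GL(n)$, with equality at the Faber--Krahn position $A_0$ from Theorem \ref{thm:FK-position}. Since $A_0K_j\to A_0K$ in the Hausdorff metric, continuity of $\lambda_1$ and of volume gives
\[
\limsup_j\Lambda_{\FK}(K_j)\le\lim_j\lambda_1(A_0K_j)|A_0K_j|^{2/n}=\Lambda_{\FK}(K).
\]

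\emph{Lower semicontinuity.} For each $j$, choose $A_j\in\SL(n)$ with $A_jK_j$ in Faber--Krahn position, so $\Lambda_{\FK}(K_j)=\lambda_1(A_jK_j)|K_j|^{2/n}$. The key step is to show that $\{A_j\}$ is bounded in $\SL(n)$. Granting this, we pass to a subsequence realizing $\liminf_j\Lambda_{\FK}(K_j)$ along which $A_j\to A_\infty\in\SL(n)$. Then $A_jK_j\to A_\infty K$, so
\[
\liminf_j\Lambda_{\FK}(K_j)=\lambda_1(A_\infty K)|K|^{2/n}\ge\Lambda_{\FK}(K).
\]

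\emph{Boundedness of $A_j$.} This is the main obstacle. We will redo the argument of Proposition \ref{prop:minimum_existence} uniformly. Translate so that the centroids of $K_j$ converge to $0$; then for large $j$ there are $0<\rho<R$ with $B(0,\rho)\subset K_j\subset B(0,R)$. By upper semicontinuity, $\lambda_1(A_jK_j)|K_j|^{2/n}$ is bounded, and $|K_j|$ is bounded below, so $\lambda_1(A_jK_j)$ is bounded. Since $\lambda_1\ge \pi^2/(4r^2)$ (Payne--Stakgold), the inradius $r(A_jK_j)$ is bounded below by some $\epsilon>0$.

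Now suppose $A_j\to\infty$ along a subsequence. Then some eigenvalue $\mu_j$ of $A_j^TA_j$ tends to infinity; let $u_j$ be a corresponding unit eigenvector. The points $\pm\rho u_j$ lie in $K_j$, and $|A_j(\pm\rho u_j)|=\rho\sqrt{\mu_j}\to\infty$. The cone construction in the proof of Proposition \ref{prop:minimum_existence} then yields a cone inside $A_jK_j$ of volume at least $c_n\epsilon^{n-1}\rho\sqrt{\mu_j}\to\infty$. This contradicts $|A_jK_j|=|K_j|\le|B(0,R)|$. Hence $\{A_j\}$ is bounded, which completes the proof.
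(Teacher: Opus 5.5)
Your proof is correct. The skeleton matches the paper's: the paper first puts $K$ in Faber--Krahn position (your upper semicontinuity step), then argues by contradiction with near-minimizers $h_j\in\SL(n)$ for $K_j$ (your lower semicontinuity step). Where you differ is the key step you singled out, namely boundedness of the optimal $\SL(n)$ elements uniformly along the sequence.

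The paper does not re-prove properness. From $K_j\subset K+\epsilon_j B\subset K+B$ and domain monotonicity it gets $\lambda_1(h_jK_j)\ge\lambda_1(h_j(K+B))$. It then applies Proposition \ref{prop:minimum_existence} as a black box to the single fixed body $K+B$. The outer sandwich turns uniform properness into properness for one body.

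You instead rerun the cone argument of Proposition \ref{prop:minimum_existence} uniformly. You use a common inner ball $B(0,\rho)\subset K_j$, which needs the centering together with the cancellation law to pass from $B(0,2\rho)\subset K$ to $B(0,\rho)\subset K_j$. You combine this with the uniform volume bound $|K_j|\le|B(0,R)|$ and the lower bound on the inradius coming from the bound on $\lambda_1(A_jK_j)$.

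The paper's trick is shorter and needs no uniform inner ball. Your route is self-contained and gives an explicit bound on $\|A_j\|$ in terms of $n$, $\rho$, $R$ and $\sup_j\Lambda_{\FK}(K_j)$. Replacing the boundary points along the eigendirection by the points $\pm\rho u_j$ of the inner ball is also a slight cleanup of the original cone construction.
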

\begin{proof}
	Assume $K_j\to K$. We ought to show that $\Lambda_{\FK}(K_j)$ $\to$ $\Lambda_{\FK}(K)$. 
	Applying a special linear transformation to all bodies, we may assume $\Lambda_{\FK}(K)=\lambda_1(K)$. It then holds that $\lambda_1(K_j)\to \lambda_1(K)=\Lambda_{\FK}(K)$.
	
	We will show that $\lambda_1(K_j)-\Lambda_{\FK}(K_j)\to 0$. If not, we may pass to a subsequence and find $\epsilon>0$ such that 
	\[ \lambda_1(K_j)\geq \Lambda_{\FK}(K_j)+\epsilon.\]
	Thus there exist $h_j\in\SL(n)$ such that  $\lambda_1(h_jK_j)\leq \lambda_1(K_j)-\epsilon$.
	
	By assumption, there is a sequence $\epsilon_j\to 0$ such that $K_j\subset K+\epsilon_j B$, $K\subset K_j+\epsilon_j B$, where $B$ is the unit ball. In particular, by monotonicity it holds that
	\begin{equation}\label{eq:sandwich} \lambda_1( h_jK+\epsilon_j h_jB)\leq \lambda_1(h_j K_j)\leq \lambda_1(K_j)-\epsilon\to \lambda_1(K)-\epsilon.\end{equation}
	If $h_j\in SL(n)$ is not bounded, we could pass to a subsequence $h_{j}\to \infty$. Assuming $j$ large enough so that $\epsilon_j\leq 1$, we find by Proposition \ref{prop:minimum_existence}
	
	\[ \lambda_1(h_jK_j)\geq \lambda_1(h_j(K+ B))\to \infty, \]
	in contradiction to eq. \eqref{eq:sandwich}.
	
	Thus $h_j$ is bounded, and so $| \lambda_1( h_jK+\epsilon_j h_jB)-\lambda_1(h_jK)| \to 0$ as $j\to\infty$. 
	But then by eq. \eqref{eq:sandwich} we find that for large $j$, $\lambda_1(h_jK)\leq \lambda_1(K)-\epsilon/2$, contradicting our assumption that $K$ is in Faber--Krahn position. This contradiction completes the proof.

\end{proof}

\subsection{Log-convexity of inverse inradius and Cheeger constant}\label{subsec:inradius-cheeger-proofs}

We write $K\ominus E=\{x:x+E\subset K\}$ for the Minkowski erosion, and $B$ for the unit Euclidean ball of the same dimension as  $K$.

\begin{proof}[Proof of Proposition \ref{prop:inradius-logconvex}]Let $P=e^D$, and suppose $r(K_0)=r_0$ and $r(K_1)=r_1$ for $K_i=P^iK$, $i=0,1$.  Then there exist $x_0,x_1$ such that
\[
        x_0+r_0B\subset K,
        \qquad
        x_1+r_1B\subset PK.
\]
Equivalently,
\[
        x_0+r_0B\subset K,
        \qquad
        P^{-1}x_1+r_1P^{-1}B\subset K.
\]
For $s\in[0,1]$, convexity of $K$ implies that
\begin{equation}\label{eq:two_balls}
        (1-s)x_0+sP^{-1}x_1+(1-s)r_0B+s r_1P^{-1}B\subset K.
\end{equation}
Now the two positive matrices $r_0^{1-s}r_1^sP^{-s},(1-s)r_0I+sr_1P^{-1}$ can be simultaneously diagonalized, and the AM-GM inequality shows $r_0^{1-s}r_1^sP^{-s}\leq (1-s)r_0I+sr_1P^{-1}$. It follows that

\[
        r_0^{1-s}r_1^sP^{-s}B
        \subset
         ((1-s)r_0I+sr_1P^{-1})B
        \subset 
         (1-s)r_0B+sr_1P^{-1}B.
\]
By eq. \eqref{eq:two_balls},
\[   (1-s)x_0+sP^{-1}x_1+r_0^{1-s}r_1^sP^{-s}B\subset K.\]
Applying $P^s$ we conclude 
\[ P^s((1-s)x_0+sP^{-1}x_1)+ r_0^{1-s}r_1^s B\subset P^sK,\]
and so $r(P^sK)\geq r_0^{1-s}r_1^s$. This proves log-convexity of $1/r$.
\end{proof}

\begin{proof}[Proof of Proposition \ref{prop:cheeger-logconvex}]
For a planar convex body $\Omega$, let $\Omega^r=\Omega\ominus rB$.  The Cheeger radius $\rcheb(\Omega):=1/h(\Omega)$ is characterized by
\begin{equation}\label{eq:cheeger-radius-char}
        |\Omega^{\rcheb(\Omega)}|=\pi\rcheb(\Omega)^2,
\end{equation}
see the characterization of Cheeger sets in convex planar domains \cite{KawohlLachandRobert}.  

Assume first $\det P=1$. Let $K_i=P^iK$ and set $r_i=\rcheb(K_i)$, $i=0,1$.  Pulling back to $K$, equation \eqref{eq:cheeger-radius-char} says
\[
        |K\ominus r_0B|=\pi r_0^2,
        \qquad
        |K\ominus r_1P^{-1}B|=\pi r_1^2.
\]
Put $r_s=r_0^{1-s}r_1^s$.  As in the proof of Proposition \ref{prop:inradius-logconvex},
\[
        r_sP^{-s}B\subset (1-s)r_0B+sr_1P^{-1}B.
\]
Hence
\[
        K\ominus r_sP^{-s}B
        \supset
        K\ominus\big((1-s)r_0B+sr_1P^{-1}B\big)
        \supset
        (1-s)(K\ominus r_0B)+s(K\ominus r_1P^{-1}B).
\]
By the planar Brunn--Minkowski inequality,

\begin{align*}
        |K\ominus r_sP^{-s}B|^{1/2}
        &\geq
        (1-s)|K\ominus r_0B|^{1/2}
        +s|K\ominus r_1P^{-1}B|^{1/2} \\
        &=\sqrt\pi\big((1-s)r_0+sr_1\big)
        \\&\geq \sqrt\pi\, r_s.
\end{align*}

After applying $P^s$, we get $|(P^sK)^{r_s}|\ge \pi r_s^2$.  By the characterization \eqref{eq:cheeger-radius-char}, $\rcheb(P^sK)\ge r_s$.  Hence $h(P^sK)\le h(K)^{1-s}h(PK)^s$, concluding the proof for $\det P=1$.

For general $P$, put $P=cP_1$ with $c>0$ and $\det P_1=1$. Then $h(P^tK)=c^{-t}h(P_1^tK)$ is log-convex by the previous argument.
\end{proof}

\subsection{Concavity of the heat trace}

\begin{proof}[Proof of Theorem \ref{thm:heattrace}]
	We make use of the notation in the proof of Theorem \ref{thm:symmetric-logconvex}.
The heat trace has the Brownian-loop representation
\[
        Z_K(\tau)=\int_K p^{K, D}_{\tau}(x,x)\,dx,
\]
We may discretize the Brownian loops as follows:
\[
        Z_K(\tau)=\lim_{m\to\infty}\left(\frac{m}{2\pi \tau}\right)^{nm/2}
        \int_{K^m}
        \exp\left(-\frac{m}{2\tau}
        \sum_{j=1}^m|x_{j+1}-x_j|^2\right)
        dx_1\cdots dx_m,
\]
where $x_{m+1}=x_1$.  The quadratic form in the exponent is degenerate only in the constant direction.  One regularizes it by adding $\eta\sum|x_j|^2$. We then apply the Gaussian B-theorem to the centrally symmetric convex set $K^m$, and let $\eta\downarrow0$.  Since $(P^tK)^m=e^{tD_m}K^m$, and the quadratic form commutes with $D_m$, this gives log-concavity of $t\mapsto Z_{P^tK}(\tau)$.

Now assume $K$ has an orthogonal symmetry group $G(K)$ acting irreducibly on $\R^n$, and let \[F_K(S)=\left.\frac{d}{dt}\right|_{t=0}Z_{e^{tS}K}(\tau).\] 
Assume $\mathrm{tr}(S)=0$. Then $S_0:=\int_{G(K)}gSg^{-1}dg$ must be a scalar matrix $cI_n$ by Schur's lemma, and it holds that 
\[nc=\mathrm{tr}(S_0)=\mathrm{tr}(S)=0.\] Consequently, $S_0=0$, and
\[F_K(S)=\int_G F_K(g^{-1}Sg)dg=F_K(S_0)=0. \]
 A log-concave function with zero derivative at $0$ in every trace-free direction is maximized at $0$ along every positive definite $\SL(n)$ flow.  The polar decomposition gives the result for all $A\in \SL(n)$.
\end{proof}

\section{Schmuckenschl\"ager-type inequalities}

\subsection{The improved inequality}

The Gaussian conjugate Rogers-Shephard inequality of \cite{Milman_Nakamura_Tsuji} implies 
\begin{equation}\label{eq:improved-schmuck-intro}
	\lambda_1(K\cap L)+\lambda_1(K+L)\le \lambda_1(K)+\lambda_1(L).
\end{equation}
for centrally symmetric convex bodies $K,L\subset\R^n$.  The proof is an immediate application of \eqref{eq:MNT-intro} to Brownian path space. 

The same argument applies to any centered Gaussian process.
\begin{theorem}\label{thm:OU-schmuck}
	Let $K,L\subset\R^n$ be centrally symmetric convex bodies.  Then the first Ornstein--Uhlenbeck eigenvalues satisfy
	\[
	\lambda^{\OU}_1(K\cap L)+\lambda^{\OU}_1(K+L)
	\le
	\lambda^{\OU}_1(K)+\lambda^{\OU}_1(L).
	\]
\end{theorem}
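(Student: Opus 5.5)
We run the same path-space argument as for Theorem \ref{thm:improved-schmuck}, with Brownian motion replaced by an Ornstein--Uhlenbeck process. Let $Y_s$ be the stationary-free OU process with generator $L_\gamma/2=\frac12(\Delta-x\cdot\nabla)$ started at $Y_0=0$, i.e.
\[
dY_s=dB_s-\tfrac12 Y_s\,ds,
\qquad
Y_s=\int_0^s e^{-(s-u)/2}dB_u .
\]
This is a centered Gaussian process. For $\tau>0$, $m\in\N$ and $s_j=j\tau/m$, the vector $(Y_{s_1},\dots,Y_{s_m})\in\R^n\otimes\R^m$ is therefore a centered Gaussian vector. Its covariance has the tensor form $I_n\otimes\widetilde\Sigma'_m$, as already used in the proof of Proposition \ref{prop:OU_convex}. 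Call its law $\gamma'_m$.

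\textbf{Step 1 (discrete inequality).} For centrally symmetric convex $K,L$, the sets $K^m,L^m\subset(\R^n)^m$ are centrally symmetric convex. They satisfy $K^m\cap L^m=(K\cap L)^m$ and $K^m+L^m=(K+L)^m$; the latter holds because Minkowski addition of products is coordinatewise. Applying \eqref{eq:MNT-intro} to $\gamma'_m$ gives
\[
\gamma'_m(K^m)\gamma'_m(L^m)\le\gamma'_m((K\cap L)^m)\,\gamma'_m((K+L)^m).
\]

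\textbf{Step 2 (continuum in time).} Let $m\to\infty$. The OU paths are continuous, and for convex bodies containing $0$ in the interior the boundary is hit and immediately exited almost surely. Hence $\mathbb P_0(Y_{s_j}\in\Omega\ \forall j)\to\mathbb P_0(\tau^{\OU}_\Omega>\tau)$ for each of the four bodies. This is the same limiting step as in Theorem \ref{thm:symmetric-logconvex}, and it uses only that the OU process is locally a Brownian motion with drift. The result is
\[
\mathbb P_0(\tau_K>\tau)\,\mathbb P_0(\tau_L>\tau)\le \mathbb P_0(\tau_{K\cap L}>\tau)\,\mathbb P_0(\tau_{K+L}>\tau).
\]

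\textbf{Step 3 (Kac formula for OU).} Take logarithms, divide by $-\tau$ and let $\tau\to\infty$. The step needs
\[
-\lim_{\tau\to\infty}\frac1\tau\log\mathbb P_0(\tau^{\OU}_\Omega>\tau)=\tfrac12\lambda^{\OU}_1(\Omega).
\]
I expect this to be the only point requiring care, and I would justify it spectrally. Write $\mathbb P_0(\tau_\Omega>\tau)=(e^{\tau L^\Omega_\gamma/2}\1)(0)$, where $L^\Omega_\gamma$ is the Dirichlet OU operator, self-adjoint in $L^2(\Omega,\gamma_n)$ with compact resolvent on the bounded $\Omega$. Expanding in eigenfunctions gives
\[
(e^{\tau L^\Omega_\gamma/2}\1)(0)=e^{-\tau\lambda_1^{\OU}/2}\,\langle \1,\phi_1\rangle_{\gamma}\,\phi_1(0)\,(1+o(1)).
\]
Here $\phi_1>0$ is the ground state, so $\langle\1,\phi_1\rangle_\gamma>0$ and $\phi_1(0)>0$. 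The pointwise evaluation is justified by ultracontractivity: apply the semigroup for unit time first, as in Lemma \ref{lem:l2}.

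Combining the three steps yields $\lambda^{\OU}_1(K\cap L)+\lambda^{\OU}_1(K+L)\le\lambda^{\OU}_1(K)+\lambda^{\OU}_1(L)$. No linear scaling or homogeneity is used anywhere, which is important because $\lambda_1^{\OU}$ is not homogeneous.
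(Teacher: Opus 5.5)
Your proposal is correct and follows essentially the same route as the paper: discretize the OU path at times $j\tau/m$, apply the Milman--Nakamura--Tsuji inequality \eqref{eq:MNT-intro} to $K^m$ and $L^m$ in the resulting centered Gaussian space (using $(K\cap L)^m$ and $(K+L)^m$), then pass to continuous survival and take the large-time Kac limit. The only differences are cosmetic: you use the generator $L_\gamma/2$, hence the factor $\frac12$ in your Kac formula, whereas the paper's version without that factor corresponds to generator $L_\gamma$; and you sketch a spectral justification of the OU Kac formula, which the paper simply asserts.
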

\subsection{The $p$-Laplacian}

Again we first consider the inradius $r(K)$, corresponding to $p\to \infty$. In this case it holds for $K, L$ centrally symmetric that
\[r(K+L)\geq r(K)+r(L), \quad r(K\cap L)=\min(r(K), r(L))\]
and consequently 
\begin{equation}\label{eq:inradius1} r^{-1}(K\cap L)+r^{-1}(K+L)\leq r^{-1}(K)+r^{-1}(L),\end{equation}
but in fact one has

\begin{equation}\label{eq:inradius2}
	\lim_{p\to\infty} \left(\lambda_{1,p}(K)+\lambda_{1,p}(L)\right)^{1/p}= \max(r^{-1}(K), r^{-1}(L))=r^{-1}(K\cap L).
\end{equation}

Eq. \eqref{eq:inradius2} together with the improved Schmuckenschl\"ager inequality suggest the following.

\begin{question}\label{que:p_schmuckenshlager}
	Does the inequality
	\[
	\lambda_{1,p}(K\cap L)
	\le
	\lambda_{1,p}(K)+\lambda_{1,p}(L)
	\]
	hold for centrally symmetric convex bodies and $1< p<\infty$, $p\neq 2$?
	
	Does the stronger inequality \[
	\lambda_{1,p}(K\cap L)+\lambda_{1,p}(K+L)
	\le
	\lambda_{1,p}(K)+\lambda_{1,p}(L)
	\] hold?
\end{question} 

We will show a weaker inequality which nevertheless implies the first inequality in Question \ref{que:p_schmuckenshlager} in the limit $p\to 1$.

\begin{theorem}\label{thm:p-schmuck}
	Let $1< p<\infty$, and let $K,L\subset\R^n$ be centrally symmetric convex bodies.  Then
	\begin{equation}\label{eq:p-schmuck}
		\lambda_{1,p}(K\cap L)^{1/p}
		\le
		\lambda_{1,p}(K)^{1/p}+\lambda_{1,p}(L)^{1/p},
	\end{equation}
	and 
	\[      h(K\cap L)\leq
	h(K)+h(L).\]
\end{theorem}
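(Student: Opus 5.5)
The plan is to prove \eqref{eq:p-schmuck} by a direct test-function construction in the spirit of Schmuckenschl\"ager's original argument. The idea is to use the \emph{product} of the eigenfunctions rather than a Gaussian inequality, since no Brownian representation is available for $p\neq2$. Let $u\ge0$ and $v\ge0$ be first Dirichlet $p$-eigenfunctions of $K$ and $L$. Since $K,L$ are centrally symmetric and the first eigenfunction is unique up to scaling, $u$ and $v$ are even.

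First I would reduce to translates. Consider $w_z(x)=u(x)v(x-z)$ for $z\in\R^n$; it vanishes outside $K\cap(L+z)$. Instead of a single $z$, average over all $z$. Set
\[F(z)=\int|u(x)v(x-z)|^p\,dx=\int u^p(x)v^p(x-z)\,dx,\]
and estimate the gradient by Minkowski's inequality:
\[\|\nabla w_z\|_p\le \|v(\cdot-z)\nabla u\|_p+\|u\nabla v(\cdot-z)\|_p.\]
Integrating the $p$-th powers over $z$ decouples the factors. Indeed,
\[\int\!\!\int|\nabla u(x)|^pv^p(x-z)\,dx\,dz=\|\nabla u\|_p^p\|v\|_p^p=\lambda_{1,p}(K)\|u\|_p^p\|v\|_p^p,\]
and similarly for the other term, while $\int F(z)\,dz=\|u\|_p^p\|v\|_p^p$. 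Minkowski's inequality in $L^p(dx\,dz)$ then gives
\[\Big(\int\!\!\int|\nabla w_z|^p\Big)^{1/p}\le\big(\lambda_{1,p}(K)^{1/p}+\lambda_{1,p}(L)^{1/p}\big)\Big(\int\!\!\int|w_z|^p\Big)^{1/p}.\]
Hence there is some $z$ for which the Rayleigh quotient of $w_z$ is at most $(\lambda_{1,p}(K)^{1/p}+\lambda_{1,p}(L)^{1/p})^p$. This yields
\[\lambda_{1,p}(K\cap(L+z))^{1/p}\le\lambda_{1,p}(K)^{1/p}+\lambda_{1,p}(L)^{1/p}\]
for that $z$.

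The main step is to pass from $K\cap(L+z)$ to $K\cap L$. Here symmetry and convexity enter. The sets $K\cap(L+z)$ and $K\cap(L-z)=-(K\cap(L+z))$ have the same eigenvalue, and their half-sum is contained in $K\cap L$ by convexity. So I would invoke a Brunn--Minkowski inequality for $\lambda_{1,p}$ (the $p$-analogue of \eqref{eq:BM2}, known for convex bodies, e.g.\ by Colesanti-type arguments). It gives
\[\lambda_{1,p}\big(\tfrac12(A+(-A))\big)\le\lambda_{1,p}(A),\qquad A=K\cap(L+z).\]
Domain monotonicity then yields $\lambda_{1,p}(K\cap L)\le\lambda_{1,p}(A)$, finishing \eqref{eq:p-schmuck}. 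If one prefers to avoid citing $p$-Brunn--Minkowski, an alternative is a symmetrization of the test function. One can use $\tilde w(x)=\big(\tfrac12 w_z(x)^{p}+\tfrac12 w_{-z}(x)^{p}\big)^{1/p}$ instead, though controlling its gradient is less clean; I expect this step to be the main obstacle, and would first try the Brunn--Minkowski route.

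Finally, the Cheeger statement follows by letting $p\to1$ in \eqref{eq:p-schmuck} using $\lambda_{1,p}\to h$ \cite{one_laplacian}, applied to each of the three bodies. Alternatively, and directly, one can take Cheeger sets and run the same averaging with indicator functions in $BV$.
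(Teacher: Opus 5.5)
Your proposal is correct and follows the paper's proof essentially step by step. Both use the product test function $u(x)v(x\mp z)$ averaged over $z$ with Minkowski's inequality in $L^p(dx\,dz)$, then the symmetry $K\cap(L-z)=-(K\cap(L+z))$, the inclusion $\tfrac12\bigl(K\cap(L-z)\bigr)+\tfrac12\bigl(K\cap(L+z)\bigr)\subset K\cap L$, the Brunn--Minkowski inequality for $\lambda_{1,p}$ with domain monotonicity, and finally the limit $p\to1$ for the Cheeger constant. The symmetrized test function $\tilde w$ and the direct $BV$ route you mention as fallbacks are not needed.
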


\subsection{Proofs}
\subsubsection{The Dirichlet Laplacian}

\begin{proof}[Proof of Theorem \ref{thm:improved-schmuck}]
	Let $X_t$ be Brownian motion started at $0$.  For fixed $T$ and $m$, let $\widetilde\gamma_m$ be the law of $X^m=(X_{T/m},\ldots,X_T)$.  Apply \eqref{eq:MNT-intro} in the Gaussian space $(\R^n)^m$ to the centrally symmetric convex sets $K^m$ and $L^m$.  Since
	\[
	K^m\cap L^m=(K\cap L)^m,
	\qquad
	K^m+L^m=(K+L)^m,
	\]
	we get
	\[
	\widetilde\gamma_m(K^m)\widetilde\gamma_m(L^m)
	\le
	\widetilde\gamma_m((K\cap L)^m)
	\widetilde\gamma_m((K+L)^m).
	\]
	Taking logarithms, passing $m\to\infty$, dividing by $-T$, and then letting $T\to\infty$ gives
	\[
	\lambda_1(K)+\lambda_1(L)
	\ge
	\lambda_1(K\cap L)+\lambda_1(K+L).
	\]
\end{proof}

\subsubsection{The Ornstein--Uhlenbeck operator}

\begin{proof}[Proof of Theorem \ref{thm:OU-schmuck}]
	Let $Y_t$ be the Ornstein--Uhlenbeck process starting at $0$.  Its finite-dimensional joint distributions are centered Gaussian. For a convex body $K$, we have
	\[
	\lambda_1^{\OU}(K)=
	-\lim_{T\to\infty}\frac1T\log\mathbb P_0(\tau_K>T).
	\]

	Discretize the event $\{\tau_K>T\}$ at times $jT/m$.  Since the vector $(Y_{T/m},\ldots,Y_T)$ has a centered Gaussian distribution, the Gaussian inequality \eqref{eq:MNT-intro} applied to $K^m$ and $L^m$ gives
	\[
	\mathbb P_0(Y_{jT/m}\in K\ \forall j)
	\mathbb P_0(Y_{jT/m}\in L\ \forall j)
	\le
	\mathbb P_0(Y_{jT/m}\in K\cap L\ \forall j)
	\mathbb P_0(Y_{jT/m}\in K+L\ \forall j).
	\]
	Passing to continuous survival and then to the large-time limit gives the theorem.
\end{proof}

\subsubsection{A weak Schmuckenschl\"ager inequality for the $p$-Laplacian}
We follow the proof of Schmuckenschl\"ager, replacing bilinearity with the Minkowski inequality.

\begin{proof}[Proof of Theorem \ref{thm:p-schmuck}]
	Let $u$ and $v$ be positive first $p$-eigenfunctions on $K$ and $L$, respectively, extended by zero outside their domains and normalized by
	\[
	\int_Ku^p=\int_Lv^p=1.
	\]
	For $z\in\R^n$ set
	\[
	h_z(x)=u(x)v(x+z).
	\]
	Then $h_z$ is supported in $K\cap(L-z)$ and
	\[
	\int_{\R^n}\int_{\R^n} h_z(x)^p\,dx\,dz=1.
	\]
	Moreover,
	\[
	\nabla h_z(x)=v(x+z)\nabla u(x)+u(x)\nabla v(x+z).
	\]
	By Minkowski's inequality,
	\[
	\begin{aligned}
		\left(\int\!\int |\nabla h_z(x)|^pdx dz\right)^{1/p}
		&\le
		\left(\int\!\int v(x+z)^p|\nabla u(x)|^pdx dz\right)^{1/p} \\
		&\quad+
		\left(\int\!\int u(x)^p|\nabla v(x+z)|^pdx dz\right)^{1/p} \\
		&=\lambda_{1,p}(K)^{1/p}+\lambda_{1,p}(L)^{1/p}.
	\end{aligned}
	\]
	
	Thus
	
	\[ \int_{\R^n} \left(  \int_{\R^n} |\nabla h_z(x)|^pdx   - (\lambda_{1,p}(K)^{1/p}+\lambda_{1,p}(L)^{1/p})^p\int _{\R^n}h_z(x)^pdx \right)dz\leq 0.\]
	Hence there exists $z$ such that
	\[
	\lambda_{1,p}(K\cap(L-z))^{1/p}
	\le
	\lambda_{1,p}(K)^{1/p}+\lambda_{1,p}(L)^{1/p}.
	\]
	Because $K$ and $L$ are centrally symmetric,
	\[
	\lambda_{1,p}(K\cap(L+z))=
	\lambda_{1,p}(K\cap(L-z)).
	\]
	Also,
	\[
	\frac12(K\cap(L-z))+\frac12(K\cap(L+z))\subset K\cap L.
	\]
	The Brunn--Minkowski inequality for the first $p$-Laplacian eigenvalue \cite{ColesantiCuoghiSalani} states that $\lambda_{1,p}^{-1/p}$ is concave under Minkowski addition of convex bodies.  Therefore
	\[
	\begin{aligned}
		\lambda_{1,p}(K\cap L)^{-1/p}
		&\ge
		\frac12\lambda_{1,p}(K\cap(L-z))^{-1/p}
		+\frac12\lambda_{1,p}(K\cap(L+z))^{-1/p} \\
		&\ge
		\bigl(\lambda_{1,p}(K)^{1/p}+\lambda_{1,p}(L)^{1/p}\bigr)^{-1}.
	\end{aligned}
	\]
	This proves \eqref{eq:p-schmuck} for $1<p<\infty$. For the Cheeger constant, we let $p\to 1$ and use \cite{one_laplacian}.

\end{proof}

\section{Discussion and open questions}

\subsection{Reverse Faber--Krahn inequality}

The normalized minimal first Dirichlet eigenvalue has the sharp lower bound
\begin{equation}\label{eq:lower-FK}
	\Lambda_{\FK}(K)\geq \Lambda_{\FK}(B^n)=
	\lambda_1(B^n)|B^n|^{2/n}
	\geq c n,
\end{equation}
by the Rayleigh-Faber--Krahn inequality, with $c>0$ a universal constant. 

As $\Lambda_{\FK}(K)$ is invariant under all invertible linear maps and translations and moreover continuous (see Proposition \ref{prop:FK_continuity}), it is natural to look for a maximizer. This question is mentioned in Bucur and Fragala \cite{BucurFragala2016}, who also solved the analogous question for the planar  Cheeger constant \cite{BucurFragala}.
In analogy with the reverse isoperimetric inequality of K. Ball \cite{Ball_reverse}, they conjectured that the sharp maximizer is the cube in the centrally symmetric class. It is equally natural to expect the simplex to maximize $\Lambda_{\FK}$ in the class of all convex bodies.

The following conjecture appeared in \cite{Schmuckenschlager}. It suggests all convex bodies in Faber--Krahn position have roughly the same normalized first Dirichlet eigenvalue.
\begin{conjecture}\label{conj:reverse-FK}
	There is a universal constant $C$ such that for every convex body $K\subset\R^n$,
	\[
	\Lambda_{\FK}(K)\le Cn.
	\]
\end{conjecture}

A near-optimal upper bound due to Klartag and E. Milman appearing in \cite{Bizeul_Klartag} asserts that $\Lambda_{\FK}(K)\le C n\log^2 n$ for all convex bodies $K$. By incorporating into their proof the optimal mean-width estimate obtained recently in \cite{Bizeul_M*} and \cite{Paouris_Pathak}, one logarithmic factor can be removed.

\begin{theorem}\label{thm:KM}
	There is a universal constant $C$ such that for every convex body $K\subset\R^n$,
	\[
	\Lambda_{\FK}(K)\le C n\log n.
	\]
	Moreover, if $K$ is in isotropic position then $\lambda_1(K)|K|^{2/n}\leq  C n\log n$.
\end{theorem}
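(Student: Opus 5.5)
The second assertion implies the first, since every convex body has an affine image in isotropic position. So the plan is to fix $K$ isotropic, normalized by $|K|=1$ with covariance $L_K^2 I_n$, and bound $\lambda_1(K)$ from above by a Rayleigh quotient. Following the Klartag--Milman argument, the natural test function is a tempered power of the gauge, or more robustly a function of the distance to $\partial K$. For instance, take $u(x)=\min(\mathrm{dist}(x,\partial K),\delta)$ for a suitable $\delta>0$. Then $|\nabla u|\le 1$, and $u$ equals $\delta$ on the inner parallel body $K\ominus\delta B$. This gives
\[
\lambda_1(K)\le \frac{|K\setminus (K\ominus \delta B)|}{\delta^2\,|K\ominus\delta B|}.
\]
It therefore suffices to find $\delta\gtrsim 1/\sqrt{n\log n}$ for which $K\ominus\delta B$ retains, say, half the volume of $K$.

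To control the volume lost near the boundary, I will use the estimate $|K\setminus(K\ominus\delta B)|\le \delta\,\mathcal H^{n-1}(\partial K)$. Equivalently, in a smoothed form, I will bound the volume of the set where $\mathrm{dist}(x,\partial K)<\delta$ using the mean width. The route I intend is as follows. For $x\in K$ with $\mathrm{dist}(x,\partial K)<\delta$, there is a direction $\theta$ with $h_K(\theta)-\langle x,\theta\rangle<\delta$. Integrating over the uniform measure on $K$, I reduce to bounding
\[
\mu_K\bigl\{x:\ \exists\theta,\ h_K(\theta)-\langle x,\theta\rangle<\delta\bigr\}.
\]
A cruder but workable bound compares $K$ with $(1-s)K$ from the centroid. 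We have $(1-s)K\subset K\ominus sr(K)B$ and $|(1-s)K|=(1-s)^n$. This alone loses too much, since the inradius of an isotropic body can be as small as order $L_K$. The improvement is to use, instead of the inradius, the relation between the surface area measure and the mean width $M^*$. In isotropic position the optimal mean-width bound of \cite{Bizeul_M*,Paouris_Pathak} gives $w(K)\le C\sqrt{n\log n}\,L_K$. Combined with the bounded isotropic constant (now known), the boundary layer of width $\delta$ then has volume $O(\delta\sqrt{n}\cdot\sqrt{n\log n})$ after normalization.

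Choosing $\delta=c/\sqrt{n\log n}$ then yields $\lambda_1(K)\le C n\log n$. The main obstacle is the middle step: converting the mean-width estimate into a bound on the volume of the $\delta$-boundary layer, which is essentially the estimate $\mathcal H^{n-1}(\partial K)$ versus mean width used in \cite{Bizeul_Klartag}. I would reproduce their argument, which passes through the Gaussian-convolved test function $u=\1_K*\gamma_\delta$ and the identity
\[
\int|\nabla u|^2\lesssim \delta^{-1}\,\E\,\|G\|_{K}\text{-type quantities}.
\]
I would then substitute the sharper mean-width bound where they used the older one. The logarithm lost in their $n\log^2 n$ came precisely from the $\sqrt{\log}$-suboptimal $M^*$ estimate entering squared, so substituting the optimal bound removes exactly one factor of $\log n$.
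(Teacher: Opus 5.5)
\textbf{The concrete mechanism in your proposal cannot work.} Your reduction to isotropic position is fine. Your closing plan is to rerun the Klartag--Milman argument of \cite{Bizeul_Klartag} with the optimal mean-width bound of \cite{Bizeul_M*,Paouris_Pathak}; that is in fact all the paper says about the proof. The gap is the mechanism you propose for that argument: it cannot beat order $n^2$, whatever mean-width estimate is fed in.

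Test it on the isotropic cube $Q=[-\frac12,\frac12]^n$. There $|Q\ominus\delta B|=(1-2\delta)^n\le e^{-2n\delta}$, which is superpolynomially small at $\delta\sim(n\log n)^{-1/2}$. So your sufficient condition fails. Your own layer estimate, $O(\delta\sqrt n\cdot\sqrt{n\log n})$, equals $O(c\sqrt n)$ at $\delta=c(n\log n)^{-1/2}$, not something $\le\frac12$. The root issue is that $\mathcal H^{n-1}(\partial Q)=2n$, even though $Q$ has mean width of the smallest possible order $\sqrt n$. Mean width does not control surface area at the scale you need.

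More decisively, no test function of the form $u=f(\mathrm{dist}(x,\partial Q))$ can work. By the coarea formula its Rayleigh quotient is the one-dimensional quotient with weight $w(r)=(1-2r)^{n-1}$ on $[0,\frac12]$ and $f(0)=0$. Substituting $f=w^{-1/2}v$ and integrating by parts gives
\[
\int_0^{1/2} f'^2w\,dr=\int_0^{1/2} v'^2\,dr+(n-1)(n-3)\int_0^{1/2}\frac{v^2}{(1-2r)^2}\,dr\ \ge\ (n-1)(n-3)\int_0^{1/2} f^2w\,dr,
\]
whereas $\lambda_1(Q)=\pi^2 n$. Almost all of the volume of $Q$ lies within distance $O(1/n)$ of $\partial Q$, and its ground state (a product of cosines) is not a function of the distance. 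Boundary-layer, P\'olya-type arguments are therefore capped at the trivial $O(n^2)$ bound coming from the inscribed ball.

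\textbf{The deferred step is the whole proof.} Your step ``reproduce their argument'' therefore carries all the content, and what you say about it is not usable.
\begin{itemize}
\item The function $\1_K*\gamma_\delta$ does not vanish on $\partial K$, so it is not in $H^1_0(K)$.
\item The displayed ``identity'' has no precise meaning.
\item The claim about where the logarithm is lost is a guess.
\end{itemize}
A working argument needs a mechanism that sees a cube-like boundary at scale $(\log n)^{-1/2}$ rather than $1/n$, for instance a Gaussian one. As an example, the substitution $v=u\,e^{-|x|^2/4}$ gives $\lambda_1(A)\le\lambda_1^{\OU}(A)+\frac n2$. For convex $A$ with $\gamma_n(A)\ge\frac12$ one has $\lambda_1^{\OU}(A)=O(\sqrt n)$, by Ball's Gaussian surface area bound. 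Combining this with scaling yields $\lambda_1(K)\lesssim n\sigma^{-2}$ whenever $\gamma_n(\sigma^{-1}K)\ge\frac12$, which costs only a factor $\log n$ on $Q$. Your proposal contains no ingredient of this kind, and without one the mean-width estimate has nothing to act on.
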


 \subsection{Beyond central symmetry} 
It is natural to ask whether central symmetry assumptions can be replaced by a more general centering assumption.

The B-theorem requires central symmetry. To prove Theorem \ref{thm:main-logconvex-intro}, we circumvent this by proving a weaker concavity property of the Gaussian measure in a special case.
However,  we do not know a convex counterexample to the B-conjecture with its linear barycenter at the origin.

\begin{question}\label{q:centered-B}
	Let $K\subset\R^n$ be a convex body whose Lebesgue barycenter is at the origin.  Is  $t\longmapsto \gamma_n(e^tK)$
	log-concave?
	More generally, is $t\mapsto \gamma_n(e^{tS}K)$ log-concave for every symmetric matrix $S$?
\end{question}

		As for Schmuckenschl\"ager's inequality, while the correlation inequality and its improved version \cite{Milman_Nakamura_Tsuji} apply to general convex bodies with $0$ in their Gaussian centroid \cite{Nakamura_Tsuji, Milman_Nakamura_Tsuji}, this property does not persist when taking samples of a Brownian motion as in section \ref{sec:symmetric_log_convex}, corresponding to Cartesian powers of the body in a Gaussian space with non-diagonal covariance matrix.
		Furthermore, numerical simulations suggest that Schmuckenschl\"ager's inequality fails under the relaxed assumption that $K, L$ have a common barycenter. One can nevertheless ask 
		
	\begin{question}
	Let $K, L\subset\R^n$ be convex bodies with Lebesgue barycenters at the origin.  Find the optimal constants $C_n, C_n'$ such that
	\[\lambda_1(K\cap L)\leq C_n(\lambda_1(K)+\lambda_1(L))\]
	or for the improved inequality
	\[\lambda_1(K\cap L)+\lambda_1(K+L)\leq C_n'(\lambda_1(K)+\lambda_1(L)).\]
\end{question}
As $-K\subset nK$, $-L\subset nL$, one immediately deduces that $C_n\leq n^2$:
\begin{align*}\lambda_1(K\cap L)&\leq \lambda_1(K\cap (-K)\cap L \cap (-L))\leq \lambda_1(K\cap (-K))+\lambda_1(L\cap (-L))\\&\leq \lambda_1(\frac1nK)+\lambda_1(\frac1nL) =n^2(\lambda_1(K)+\lambda_1(L)).\end{align*}
Using the Brunn--Minkowski inequality $\lambda_1(K+L)\leq \frac18 (\lambda_1(K)+\lambda_1(L))$ we see that $C_n'\leq n^2+\frac18$.

\subsection{The first nonzero Neumann eigenvalue}
The Szeg\H{o}--Weinberger inequality asserts that the first nonzero Neumann eigenvalue $\mu_1(K)$ is maximized, among domains of fixed volume, by the Euclidean ball. 

Furthermore, $\mu_1(K)$ is comparable, up to dimension-dependent constants, to $\mathrm{Diam}(K)^{-2}$. Given $P\neq I_n$ positive definite in $\SL(n)$, $t\mapsto \mathrm{Diam}(P^tK)$ is log-convex and proper, and in particular $t\mapsto \mu_1(P^tK)^{-1}$ is a proper function.

In light of the above and the log-convexity of $\lambda_1(P^tK)$,  it is then natural to ask

\begin{question}
	Is it true that $t\mapsto \mu_1(P^tK)$ is \emph{log-concave}, whenever $K$ is a convex body and $P$ positive definite? 
\end{question}
\noindent The example of the square shows that it need not be strictly log-concave. 

A closely related question is the existence of a distinguished Szeg\H{o}-Weinberger position of convex bodies. The existence of $A\in \SL(n)$ maximizing $\mu_1(AK)$ follows immediately from the properness of $t\mapsto \mu_1(P^tK)^{-1}$, as in Proposition \ref{prop:minimum_existence}.
Uniqueness is less clear.
\begin{question}
	Is the maximum of $\mu_1(AK)$, $A\in\SL(n)$, attained at a unique $A$ up to an orthogonal transformation?
\end{question}

\bibliographystyle{plain}
\bibliography{faber_krahn_references}

\end{document}